\setlist[enumerate]{labelsep=*, leftmargin=1.5pc}
\setlist[enumerate]{label=\normalfont(\roman*), ref=\roman*}
\newtheorem{thm}{Theorem}[section]
\newtheorem{lemma}[thm]{Lemma}
\newtheorem{cor}[thm]{Corollary}
\newtheorem{prop}[thm]{Proposition}
\theoremstyle{definition}
\newtheorem{example}[thm]{Example}
\newtheorem{definition}[thm]{Definition}
\newtheorem{conjecture}[thm]{Conjecture}
\numberwithin{equation}{section}
\newcommand{\Z}{\mathbb{Z}}
\newcommand{\Q}{\mathbb{Q}}
\newcommand{\R}{\mathbb{R}}
\newcommand{\C}{\mathbb{C}}
\newcommand{\Proj}{\mathbb{P}}
\newcommand{\abs}[1]{\left\vert{#1}\right\vert}
\newcommand{\dual}[1]{{#1}^*}
\DeclareMathOperator{\Disc}{\Delta}
\DeclareMathOperator{\Ehr}{Ehr}
\newcommand{\vol}[1]{\operatorname{vol}\mleft({#1}\mright)}
\newcommand{\bdry}[1]{\partial{#1}}
\newcommand{\intr}[1]{{#1}^\circ}
\newcommand{\V}[1]{\operatorname{vert}\mleft({#1}\mright)}
\renewcommand{\dim}[1]{\operatorname{dim}\mleft({#1}\mright)}
\renewcommand{\det}[1]{\operatorname{det}\mleft({#1}\mright)}
\renewcommand{\gcd}[1]{\operatorname{gcd}\mleft\{{#1}\mright\}}
\renewcommand{\Re}[1]{\operatorname{\mathfrak{Re}}\mleft({#1}\mright)}
\renewcommand{\Im}[1]{\operatorname{\mathfrak{Im}}\mleft({#1}\mright)}
\newcommand{\sconv}[1]{\operatorname{conv}\mleft\{{#1}\mright\}}
\newcommand{\scone}[1]{\operatorname{cone}\mleft\{{#1}\mright\}}
\newcommand{\mult}[1]{\operatorname{mult}\mleft({#1}\mright)}
\DeclareMathOperator{\Hom}{Hom}
\begin{document}
\author[G.\,Heged{\"u}s]{G{\'a}bor~Heged{\"u}s}
\address{\'{O}buda University\\Antal Bejczy Center for Intelligent Robotics\\Budapest\ H-1032\\Hungary}
\email{hegedus.gabor@nik.uni-obuda.hu} 
\author[A.\,Higashitani]{Akihiro Higashitani}
\address{Department of Mathematics\\Graduate School of Science\\Kyoto University\\Kitashirakawa-Oiwake cho\\Sakyo-ku\\Kyoto\\606-8502\\Japan}
\email{ahigashi@math.kyoto-u.ac.jp} 
\author[A.\,M.\,Kasprzyk]{Alexander~Kasprzyk}
\address{Department of Mathematics\\Imperial College London\\London\ SW7 2AZ\\United Kingdom}
\email{a.m.kasprzyk@imperial.ac.uk} 
\keywords{Reflexive polytope, Ehrhart polynomial, Hilbert polynomial, roots, canonical line hypothesis}
\subjclass[2010]{52B20 (Primary); 05A15, 14M25 (Secondary)}
\title{Ehrhart polynomial roots of reflexive polytopes}
\maketitle
\begin{abstract}
Recent work has focused on the roots $z\in\C$ of the Ehrhart polynomial of a lattice polytope~$P$. The case when $\Re{z}=-1/2$ is of particular interest: these polytopes satisfy Golyshev's ``canonical line hypothesis''. We characterise such polytopes when $\dim{P}\leq 7$. We also consider the ``half-strip condition'', where all roots $z$ satisfy $-\dim{P}/2\leq\Re{z}\leq \dim{P}/2-1$, and show that this holds for any reflexive polytope with $\dim{P}\leq 5$. We give an example of a $10$-dimensional reflexive polytope which violates the half-strip condition, thus improving on an example by Ohsugi--Shibata in dimension~$34$.
\end{abstract}
\section{Introduction}\label{sec:intro}
Let $P\subset\Z^d\otimes_\Z\Q$ be a convex lattice polytope of dimension $d$. The \emph{Ehrhart polynomial}~\cite{Ehr62} $L_P$ counts the number of lattice points in successive dilations of $P$, i.e.~$L_P(m)=\abs{mP\cap\Z^d}$ for all $m\in\Z_{\geq 0}$, and is a polynomial of degree $d$. Stanley~\cite{Stan80} showed that the corresponding generating series, the \emph{Ehrhart series}, can be written as a rational function with numerator a degree $d$ polynomial whose coefficients define the so-called \emph{$\delta$-vector} (or \emph{$h^*$-vector}) of $P$:
\[
\Ehr_P(t)=\frac{\delta_0+\delta_1t+\ldots+\delta_dt^d}{(1-t)^{d+1}}=\sum_{m\geq 0}L_P(m)t^m.
\]
Starting with a $\delta$-vector one can easily recover the Ehrhart polynomial:

\begin{lemma}\label{lem:main_lemma}
Let $P$ be a $d$-dimensional convex lattice polytope with $\delta$-vector $(\delta_0,\delta_1,\ldots,\delta_d)$. Then
\[
L_P(m)=\sum_{j=0}^{d} \delta_j{d+m-j\choose d}.
\]
\end{lemma}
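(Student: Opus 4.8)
The plan is to read off the coefficient of $t^m$ on both sides of the Ehrhart series identity and compare. First I would record the standard power-series expansion
\[
\frac{1}{(1-t)^{d+1}}=\sum_{k\geq 0}\binom{d+k}{d}t^k,
\]
which is the negative binomial series (equivalently, a stars-and-bars count of the monomials of degree $k$ in $d+1$ variables). Substituting this into
\[
\Ehr_P(t)=\frac{\delta_0+\delta_1t+\cdots+\delta_dt^d}{(1-t)^{d+1}}
\]
then expresses the right-hand side as a product of two power series.

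Next I would form the Cauchy product. Writing the numerator as $\sum_{j=0}^d\delta_jt^j$ and multiplying by $\sum_{k\geq 0}\binom{d+k}{d}t^k$, the coefficient of $t^m$ is obtained by setting $k=m-j$, giving
\[
[t^m]\,\Ehr_P(t)=\sum_{\substack{0\leq j\leq d\\ j\leq m}}\delta_j\binom{d+m-j}{d}.
\]
Comparing with $\Ehr_P(t)=\sum_{m\geq 0}L_P(m)t^m$ then yields $L_P(m)=\sum_{j\leq m}\delta_j\binom{d+m-j}{d}$.

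The only point requiring care — and really the only obstacle — is reconciling the restricted range $j\leq m$ with the stated sum over all $j$ from $0$ to $d$. I would resolve this by interpreting $\binom{d+m-j}{d}$ as the polynomial $\tfrac{1}{d!}(d+m-j)(d+m-j-1)\cdots(m-j+1)$ in $m$. For the extra indices $m<j\leq d$ we have $-d\leq m-j\leq -1$, so the top entry $d+m-j$ lies in $\{0,1,\ldots,d-1\}$; since this is strictly less than $d$, the binomial coefficient vanishes. Hence the terms with $j>m$ contribute nothing, and the sum may be extended to the full range $0\leq j\leq d$ without changing its value, producing exactly the claimed formula.
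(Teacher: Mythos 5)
Your proof is correct and complete: the negative binomial expansion, the Cauchy product, and the observation that $\binom{d+m-j}{d}$ vanishes (as a product of $d$ consecutive integers containing $0$) when $m<j\leq d$ together give exactly the stated identity. The paper states this lemma without proof, treating it as a standard consequence of the definition of the $\delta$-vector, and your argument is precisely the standard derivation it implicitly relies on, so there is nothing to compare beyond noting that you have supplied the routine details — including the one point genuinely worth spelling out, the extension of the summation range from $j\leq m$ to $0\leq j\leq d$.
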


\noindent
In general, combinatorial interpretations for the coefficients $\delta_i$ of the $\delta$-vector are not known, however the work of Ehrhart~\cite{Ehr67} tells us that:
\begin{enumerate}
\item
$\delta_0=1$;
\item\label{item:delta_1}
$\delta_1=\abs{P\cap\Z^d}-d-1$;
\item
$\delta_d=\abs{\intr{P}\cap\Z^d}$, where $\intr{P}=P\setminus\bdry{P}$ denotes the (strict) interior of $P$;
\item
$\delta_0+\ldots+\delta_d=d!\vol{P}$, where $\vol{P}$ denotes the (non-normalised) volume of $P$.
\end{enumerate}
Hibi's Lower Bound Theorem~\cite{HibiLBT} states that if $\abs{\intr{P}\cap\Z^d}>0$ then $\delta_1\leq\delta_i$ for each $2\leq i\leq d-1$. In particular, combined with~\eqref{item:delta_1} we see that the $\delta_i$ are positive.

A convex lattice polytope $P$ is called \emph{reflexive} if the \emph{dual} (or \emph{polar}) polyhedron
\[
\dual{P}:=\{u\in\Z^d\otimes_\Z\Q\mid\left<u,v\right>\geq -1\text{ for all }v\in P\}
\]
is also a lattice polytope. If $P$ is reflexive then $\dual{P}$ is also reflexive, and $\intr{P}\cap\Z^d=\{0\}$. Reflexive polytopes are of particular importance in toric geometry: they correspond to Gorenstein toric Fano varieties and are a key combinatorial tool, as introduced by Batyrev~\cite{Bat94}, for constructing topologically mirror-symmetric pairs of Calabi--Yau varieties. Reflexive polytopes were characterised by Hibi~\cite{Hib91} as being those polytopes (up to lattice translation) with palindromic $\delta$-vectors, i.e.~$\delta_i=\delta_{d-i}$ for each $0\leq i\leq d$. A special class of reflexive polytopes are the \emph{smooth Fano polytopes}. These are simplicial reflexive polytopes $P$ such that, for each facet $F$, the vertices $\V{F}$ of $F$ generate the underlying lattice $\Z^d$. They correspond to the smooth toric Fano varieties. For a summary of the various equivalences between lattice polytopes and toric Fano varieties, see~\cite{KN12}.

Several recent results have concentrated on the roots of the Ehrhart function $L_P$, regarded as a polynomial over $\C$. These results are inspired in part by Rodriguez-Villegas' study~\cite{R-V02} of Hilbert polynomials all of whose roots $z\in\C$ lie on a line $\Re{z}=-a/2$, and the connection when $a=1$ with the Riemann zeta function. Braun~\cite{Bra08} has shown that the roots of $L_P$ lie inside a disc centred at $-1/2$ with radius $d(d-1/2)$. Beck--de~Loera--Develin--Pfeifle--Stanley~\cite{BLDPS05} and Braun~\cite{BD08} have also shown that the roots $z$ lie in a strip
\begin{equation}\tag{S}\label{eq:S}
-d\leq\Re{z}\leq d-1
\end{equation}
when $d\leq 5$, or for arbitrary $d$ when $\Im{z}=0$. An example~\cite{H12} that fails to satisfy the \emph{strip condition}~\eqref{eq:S} is known in dimension~$15$.

When $P$ is a reflexive polytope, Macdonald's Reciprocity Theorem~\cite{Mac71} gives that $L_P(-m-1)=(-1)^dL_P(m)$, and so the roots of $L_P$ are symmetrically distributed with respect to the line $\Re{z}=-1/2$. In fact:

\begin{lemma}
The lattice polytope $P$ is reflexive (up to lattice translation) if and only if
\begin{equation}\label{eq:reflexive_characterisation}
\sum_{i=1}^dz_i=-\frac{d}{2}
\end{equation}
where $z_1,\ldots,z_d\in\C$ are the roots of $L_P$.
\end{lemma}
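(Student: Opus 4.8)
The plan is to convert the statement about roots into a statement about the $\delta$-vector and then invoke Hibi's palindromy characterisation of reflexivity. Writing $L_P(m)=c_dm^d+c_{d-1}m^{d-1}+\cdots$ and expanding $\binom{d+m-j}{d}=\tfrac{1}{d!}\prod_{i=1}^d(m-j+i)$ in Lemma~\ref{lem:main_lemma}, a short computation yields
\[
c_d=\frac{1}{d!}\sum_{j=0}^d\delta_j,\qquad c_{d-1}=\frac{1}{d!}\sum_{j=0}^d\delta_j\left(\frac{d(d+1)}{2}-dj\right).
\]
Since $\sum_{i=1}^dz_i=-c_{d-1}/c_d$ by Vieta, the hypothesis $\sum_{i=1}^dz_i=-d/2$ is equivalent, after simplification, to the single linear relation
\[
\sum_{j=0}^d(2j-d)\,\delta_j=0,\qquad\text{i.e.}\qquad\sum_{j=0}^dj\,\delta_j=\frac{d}{2}\sum_{j=0}^d\delta_j.
\]

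For the forward implication I would use that a reflexive polytope has, by Hibi's theorem, a palindromic $\delta$-vector $\delta_j=\delta_{d-j}$. Pairing the summand indexed by $j$ with that indexed by $d-j$ makes $(2j-d)\delta_j+(2(d-j)-d)\delta_{d-j}=0$, so the linear relation above holds and hence $\sum z_i=-d/2$. Equivalently, and more directly, Macdonald reciprocity $L_P(-1-m)=(-1)^dL_P(m)$ shows the multiset of roots is invariant under the involution $z\mapsto-1-z$; the roots therefore split into pairs summing to $-1$ together with any fixed points at $-1/2$, and summing gives $-d/2$.

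The converse is where I expect the genuine difficulty. The root hypothesis delivers only the single first-moment relation $\sum_j(2j-d)\delta_j=0$, whereas reflexivity is equivalent, via Hibi, to the full palindromy system $\delta_j=\delta_{d-j}$ for all $j$ --- that is, to $\lfloor d/2\rfloor$ independent equations. The main obstacle is precisely bridging this gap: one linear equation is a priori far weaker than palindromy. The natural strategy is to bring in the remaining constraints on $\delta$-vectors, namely $\delta_0=1$, Stanley's non-negativity $\delta_j\geq0$, and Hibi's Lower Bound Theorem, and try to force the individual equalities $\delta_j=\delta_{d-j}$; this step is the crux and the one I would examine most carefully, since it is not at all clear that the first-moment relation alone carries enough information to recover reflexivity.
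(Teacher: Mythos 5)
Your forward implication is correct and needs no repair: palindromy of the $\delta$-vector (equivalently, Macdonald reciprocity) pairs the roots under $z\mapsto-1-z$, and summing gives $-d/2$. The gap is the converse, which you explicitly leave open, and the strategy you propose for it cannot succeed as stated. The relation you derive, $\sum_{j=0}^d(2j-d)\delta_j=0$, together with $\delta_0=1$, non-negativity, and Hibi's Lower Bound Theorem, does not pin down palindromy: Hibi's theorem is only available when $\abs{\intr{P}\cap\Z^d}>0$, which is not part of your hypothesis, and in any case the vector $(1,1,4,0)$ (for $d=3$) satisfies your first-moment relation, non-negativity, $\delta_0=1$, and every constraint in your list that applies, yet is not palindromic. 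So no argument carried out purely at the level of these linear conditions on abstract $\delta$-vectors can close the converse; any proof must use the fact that $\delta$ comes from an actual polytope.

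The missing idea --- and essentially the paper's entire proof --- is geometric rather than linear-algebraic. By Ehrhart~\cite{Ehr67}, the top two coefficients of $L_P(z)=c_dz^d+c_{d-1}z^{d-1}+\cdots+c_0$ are $c_d=\vol{P}$ and $c_{d-1}=\frac{1}{2}\vol{\bdry{P}}$, where the boundary volume is normalised facet by facet with respect to the induced lattices. Hence, by Vieta exactly as in your setup,
\[
\sum_{i=1}^dz_i=-\frac{c_{d-1}}{c_d}=-\frac{1}{2}\cdot\frac{\vol{\bdry{P}}}{\vol{P}},
\]
so condition~\eqref{eq:reflexive_characterisation} is equivalent to $\vol{\bdry{P}}=d\vol{P}$. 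The paper then invokes the characterisation (cited to Batyrev~\cite{Bat94}) that a lattice polytope is reflexive up to lattice translation if and only if $d\vol{P}=\vol{\bdry{P}}$; this single geometric equivalence yields both directions of the lemma at once. Your computation of $c_{d-1}$ in terms of the $\delta_j$ is correct, but to finish you must identify that coefficient with half the lattice-normalised boundary volume and appeal to this theorem --- that identification is precisely the step your proposal is missing.
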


\begin{proof}
Write $L_P(z)=c_dz^d+c_{d-1}z^{d-1}+\ldots+c_0$. By~\cite{Ehr67} we have that $c_d=\vol{P}$, $c_{d-1}=1/2\vol{\bdry{P}}$, and $c_0=1$. Hence:
\[
-\sum_{i=1}^dz_i=\frac{c_{d-1}}{\vol{P}}=\frac{1}{2}\cdot\frac{\vol{\bdry{P}}}{\vol{P}}
\]
But $P$ is reflexive (up to lattice translation) if and only if $d\vol{P}=\vol{\bdry{P}}$~\cite{Bat94}. Hence~\eqref{eq:reflexive_characterisation} characterises when a polytope is reflexive.
\end{proof}

Bey--Henk--Wills~\cite{BHW07} proved that if
\begin{equation}\tag{CL}\label{eq:CL}
\Re{z}=-\frac{1}{2}
\end{equation}
for all roots $z$ of $L_P$ then $P$ is a reflexive polytope (after possible translation by a lattice vector). Golyshev~\cite{Gol09} proposed the study of Fano varieties satisfying the so-called \emph{canonical line hypothesis}; in the context of lattice polytopes this is equivalent to the roots satisfying condition~\eqref{eq:CL}. He conjectured that every $d$-dimensional smooth Fano polytope with $d\leq 5$ satisfies the canonical line hypothesis. This was proved in~\cite{HK11}, along with an example of a smooth Fano polytope in dimension~$6$ failing to satisfy~\eqref{eq:CL}.

With the above results in mind, we introduce the following terminology:

\begin{definition}
We say that a lattice polytope $P$ is a \emph{CL-polytope} if the roots $z\in\C$ of the Ehrhart polynomial $L_P$ satisfy~\eqref{eq:CL}. We say that $P$ is \emph{real} if the imaginary part $\Im{z}=0$ for all roots $z$ of $L_P$.
\end{definition}

\begin{prop}[\protect{\cite[Proposition~1.8]{BHW07}}]\label{prop:CL_volume}
Let $P$ be a $d$-dimensional CL-polytope. Then $\vol{P}\leq 2^d$.
\end{prop}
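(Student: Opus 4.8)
The plan is to exploit the two pieces of data we already have about the coefficients of $L_P$, namely that its leading coefficient equals $\vol{P}$ and its constant term equals $1$, together with the defining property of a CL-polytope. First I would write the Ehrhart polynomial in factored form
\[
L_P(z)=\vol{P}\prod_{i=1}^d(z-z_i),
\]
where $z_1,\ldots,z_d\in\C$ are its roots; this uses only that $L_P$ has degree $d$ with leading coefficient $\vol{P}$, as recorded in the proof of the reflexivity lemma above. Evaluating at $z=0$ and using $L_P(0)=\delta_0=1$ then gives
\[
\vol{P}\prod_{i=1}^d(-z_i)=1.
\]

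Next I would pass to absolute values. Taking $\abs{\cdot}$ of both sides and using $\vol{P}>0$ yields $\vol{P}\prod_{i=1}^d\abs{z_i}=1$, so that $\vol{P}=\left(\prod_{i=1}^d\abs{z_i}\right)^{-1}$. The CL-hypothesis now enters: since every root satisfies $\Re{z_i}=-1/2$, we have $\abs{z_i}=\sqrt{1/4+\Im{z_i}^2}\geq 1/2$ for each $i$. Hence $\prod_{i=1}^d\abs{z_i}\geq 2^{-d}$, and inverting this inequality gives $\vol{P}\leq 2^d$, as required.

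The argument is short, and the only point that requires care is the bookkeeping at $z=0$: one must correctly identify the constant term $c_0=L_P(0)=1$ and the leading coefficient $c_d=\vol{P}$, so that the product of the roots is pinned down to be exactly $1/\vol{P}$. I do not expect a genuine obstacle here, since both facts are supplied by the results of Ehrhart quoted above; the role of the CL-condition is simply to force each root to lie at distance at least $1/2$ from the origin. This is precisely the extremal configuration making the bound $2^d$ sharp, as equality would require every root to sit at $-1/2$.
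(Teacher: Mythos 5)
Your proof is correct and is essentially the paper's argument: both rest on the identity $\vol{P}\prod_{i=1}^d\abs{z_i}=1$ (Vieta's formula with leading coefficient $\vol{P}$ and constant term $1$) combined with the observation that $\Re{z_i}=-\tfrac{1}{2}$ forces $\abs{z_i}\geq\tfrac{1}{2}$. The only difference is cosmetic: you take absolute values at once, whereas the paper pairs conjugate roots as $-\tfrac{1}{2}\pm\alpha_j i$ (each pair contributing $\tfrac{1}{4}+\alpha_j^2\geq\tfrac{1}{4}$) and treats $d$ even and $d$ odd separately, a case split your version neatly avoids.
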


\begin{proof}
First we consider the case when $d=2k$. Let $z_i\in\C$ denote the roots of the Ehrhart polynomial $L_P$. Then
\[
\prod_{j=1}^dz_j=\frac{1}{\vol{P}}.
\]
We can assume that $z_{2j-1}=-1/2-\alpha_ji$ and $z_{2j}=-1/2+\alpha_ji$, where $\alpha_j\in\R$, for each $1\leq j\leq k$. Hence
\[
\frac{1}{\vol{P}}=\prod_{j=1}^kz_{2j-1}z_{2j}=\prod_{j=1}^k\left(\frac{1}{4}+\alpha_j^2\right)\geq\frac{1}{4^k}= \frac{1}{2^d}.
\]
The case when $d=2k+1$ is similar.
\end{proof}

\noindent
In~\S\S\ref{sec:dim23}--\ref{sec:dim5} we characterise when a reflexive polytope $P$ with $d\leq 5$ is a CL-polytope:

\begin{thm}[Canonical line hypothesis]\label{thm:canonical_line}
Let $P$ be a $d$-dimensional reflexive polytope.
\begin{enumerate}
\item[$d=2$:]
$P$ is a CL-polytope if and only if $\vol{P}\leq 4$, or equivalently if and only if $\abs{P\cap\Z^2}\leq 9$.
\item[$d=3$:]
$P$ is a CL-polytope if and only if $\vol{P}\leq 8$, or equivalently if and only if $\abs{P\cap\Z^3}\leq 27$.
\item[$d=4$:]
$P$ is a CL-polytope if and only if either $\vol{P}=16$ and $\abs{P\cap\Z^4}=81$, or
\[
5\vol{P}<\abs{P\cap\Z^4}-1,\ 2\abs{P\cap\Z^4}\le 9(\vol{P} + 2),\text{ and }\left(\abs{P\cap\Z^4}-1-4\vol{P}\right)^2\ge 16\vol{P}.
\]
\item[$d=5$:]
$P$ is a CL-polytope if and only if either $\vol{P}=32$ and $\abs{P\cap\Z^5}=243$, or
\[
15 \vol{P}<2\left(\abs{P\cap\Z^5}-3\right),\ 4\abs{P\cap\Z^5}\leq 27(\vol{P}+4),\text{ and }\left(\abs{P\cap\Z^5}-3-6\vol{P}\right)^2\geq 72\vol{P}.
\]
\end{enumerate}
\end{thm}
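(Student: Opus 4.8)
The plan is to exploit the two symmetries forced on $L_P$ when $P$ is reflexive. By Hibi's characterisation the $\delta$-vector is palindromic, $\delta_i=\delta_{d-i}$, so after fixing $\delta_0=\delta_d=1$ the only free data are $\delta_1,\ldots,\delta_{\lfloor d/2\rfloor}$, and by Lemma~\ref{lem:main_lemma} these determine $L_P$ completely. Equivalently, Macdonald reciprocity gives the polynomial identity $L_P(-z-1)=(-1)^dL_P(z)$, so the substitution $z=-1/2+u$ makes $f(u):=L_P(-1/2+u)$ satisfy $f(-u)=(-1)^df(u)$: it is an even polynomial in $u$ when $d$ is even, and $u$ times an even polynomial when $d$ is odd. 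In either case I can write the even part as a polynomial $Q(s)=\vol{P}\,s^{\lfloor d/2\rfloor}+\cdots$ in $s=u^2$ of degree $\lfloor d/2\rfloor$, with leading coefficient $\vol{P}$.

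The central reduction is the following observation: a root $z$ of $L_P$ has $\Re{z}=-1/2$ exactly when $u=z+1/2$ is purely imaginary, that is, when $s=u^2$ is real and nonpositive. Since the roots of $L_P$ other than the root $-1/2$ (which is forced when $d$ is odd) occur in $\pm u$ pairs, each such pair corresponding to a single root $s=u^2$ of $Q$, it follows that $P$ is a CL-polytope if and only if every root of $Q$ is real and nonpositive. This converts the problem into a classical question about a real polynomial of degree $\lfloor d/2\rfloor$, which I would analyse dimension by dimension.

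For $d=2,3$ the polynomial $Q$ is linear, so the condition is simply that its unique root be $\le 0$. Writing $Q(s)=\vol{P}\,s+B$ and pinning down $B$ from $L_P(0)=1$, this becomes $\vol{P}\le 2^d$, and evaluating $L_P(1)=\abs{P\cap\Z^d}$ gives the equivalent lattice-point reformulation ($\abs{P\cap\Z^d}\le 3^d$, i.e.\ $9$ and $27$). For $d=4,5$ the polynomial is a quadratic $Q(s)=\vol{P}\,s^2+As+B$, and since $\vol{P}>0$, the requirement that both roots be real and $\le 0$ is equivalent to the three conditions $A\ge 0$ (nonpositive sum), $B\ge 0$ (nonnegative product), and $A^2-4\,\vol{P}\,B\ge 0$ (real roots). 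I would determine $A$ and $B$ as explicit linear functions of $\vol{P}$ and $\abs{P\cap\Z^d}$ using the leading coefficient $\vol{P}$ together with the two evaluations $L_P(0)=1$ and $L_P(1)=\abs{P\cap\Z^d}$ (for $d=5$ one first divides out the root at $-1/2$). Substituting turns $A\ge 0$ and $B\ge 0$ directly into the first two stated inequalities, while the discriminant inequality simplifies, after a routine but slightly magical algebraic manipulation, into one of the form $\bigl(\abs{P\cap\Z^d}-c_1-c_2\vol{P}\bigr)^2\ge c_3\vol{P}$ for explicit positive constants $c_1,c_2,c_3$ depending only on $d$, matching the third condition in each case.

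The one genuinely delicate point, and the reason the statement for $d=4,5$ splits into two cases, is the behaviour on the boundary $A=0$. If $A=0$ then the two roots of $Q$ are $\pm\sqrt{-B/\vol{P}}$, symmetric about the origin, and they can both be real and $\le 0$ only if they both vanish; this forces $B=0$ and hence $L_P(z)=\vol{P}\,(z+1/2)^d$. By Proposition~\ref{prop:CL_volume} this is precisely the extremal case $\vol{P}=2^d$, and evaluating at $z=1$ gives $\abs{P\cap\Z^d}=3^d$ (namely $81$ and $243$). Consequently the sum-condition must be imposed strictly, $A>0$, throughout the generic family, with the single excluded boundary point recorded separately as the ``all roots at $-1/2$'' case; by contrast the product- and discriminant-conditions may be left non-strict, since their boundaries ($B=0$, or a repeated value of $s$) still correspond to honest CL-polytopes. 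Carrying out this boundary analysis correctly, rather than the inequality bookkeeping, is where I expect the real work to lie.
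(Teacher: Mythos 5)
Your proposal is correct and follows essentially the same route as the paper: substitute $z=-1/2+u$, use the palindromic symmetry to reduce to a quadratic in $s=u^2$ with leading coefficient $\vol{P}$ (after dividing out the forced root at $-1/2$ in odd dimensions), characterise when all its roots are real of the appropriate sign via sum/product/discriminant conditions (the paper's Lemma~\ref{lem:quadratic}), and split off the degenerate boundary case $L_P(z)=\vol{P}\left(z+1/2\right)^d$ exactly as in the paper's case~(i), including the correct strict/non-strict placement of the three inequalities. The only cosmetic difference is that you determine the quadratic's remaining coefficients from the evaluations $L_P(0)=1$ and $L_P(1)=\abs{P\cap\Z^d}$, obtaining the conditions directly in terms of $\vol{P}$ and $\abs{P\cap\Z^d}$, whereas the paper first expresses everything in $(\delta_1,\delta_2)$ via Lemma~\ref{lem:main_lemma} (Theorems~\ref{thm:dim_4_conditions} and~\ref{thm:dim_5_conditions}) and then substitutes (Corollaries~\ref{cor:dim_4_conditions} and~\ref{cor:dim_5_conditions}).
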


\noindent
A characterisation when $d=6$ and~$7$ is given in~\S\ref{sec:dim6_7_CL}, Theorems~\ref{thm:dim6_conditions} and~\ref{thm:dim7_conditions}. As a consequence we have:

\begin{cor}\label{cor:cube}
Let $P$ be a $d$-dimensional CL-polytope with $d\leq 7$. Then $\abs{P\cap\Z^d}\leq 3^d$.
\end{cor}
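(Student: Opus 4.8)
The plan is to combine the explicit characterisations of CL-polytopes with the volume bound $\vol{P}\leq 2^d$ from Proposition~\ref{prop:CL_volume}. First I would observe that, by the Bey--Henk--Wills result cited above, any CL-polytope is reflexive after a lattice translation; such a translation changes neither $\abs{P\cap\Z^d}$ nor $\vol{P}$, so I may assume $P$ is reflexive and apply Theorem~\ref{thm:canonical_line} (for $d\leq 5$) together with Theorems~\ref{thm:dim6_conditions} and~\ref{thm:dim7_conditions} (for $d=6,7$). Writing $N=\abs{P\cap\Z^d}$ and $V=\vol{P}$, the goal in each case $2\leq d\leq 7$ is simply $N\leq 3^d$.

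For $d=2$ and $d=3$ the bound $N\leq 3^d$ is literally one of the equivalent conditions listed in the characterisation, so nothing remains to be done. For $d\geq 4$ each characterisation splits into a single extremal case together with a ``middle'' inequality bounding $N$ linearly in $V$: for $d=4$ this is $2N\leq 9(V+2)$, and for $d=5$ it is $4N\leq 27(V+4)$. The key observation is that substituting the maximal admissible volume $V=2^d$ into this inequality collapses it to exactly $N\leq 3^d$. Indeed, for $d=4$ Proposition~\ref{prop:CL_volume} gives $V\leq 16$, whence $2N\leq 9\cdot 18=162$ and $N\leq 81=3^4$; for $d=5$ it gives $V\leq 32$, whence $4N\leq 27\cdot 36=972$ and $N\leq 243=3^5$. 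The remaining extremal branch is in each dimension precisely the value $N=3^d$ (namely $N=81$ when $d=4$ and $N=243$ when $d=5$), so the bound also holds there.

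For $d=6$ and $d=7$ I would argue identically, reading off the analogous middle inequality from Theorems~\ref{thm:dim6_conditions} and~\ref{thm:dim7_conditions} and feeding in $V\leq 2^6=64$ and $V\leq 2^7=128$ respectively. The only genuine point to verify is that each of these inequalities, evaluated at its extreme volume $V=2^d$, collapses to the clean value $3^d$, and that the corresponding extremal case is again $N=3^d$; this arithmetic check is the heart of the statement and the step I would scrutinise most carefully, since those two inequalities are not in front of me here. Everything else is routine bookkeeping across the seven dimensions.
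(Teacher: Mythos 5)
Your argument is sound for $d\leq 5$, where it is a mild variant of the paper's route: the paper deduces $\delta_1\leq 76$ (for $d=4$) by combining the two linear inequalities $5\delta_2<14\delta_1+86$ and $10\delta_1\leq 3\delta_2+70$ of Theorem~\ref{thm:dim_4_conditions}, whereas you combine the second of these (your $2N\leq 9(V+2)$) with the volume bound $V\leq 2^d$ of Proposition~\ref{prop:CL_volume}; both give $N\leq 81$, and likewise in dimension $5$. This substitution works in these dimensions because there are only two free parameters, so $\delta_2$ is determined by $(N,V)$: from $24V=2+2\delta_1+\delta_2$ and $V\leq 16$ one gets $\delta_2\leq 382-2\delta_1$, and the middle inequality then closes up to $16\delta_1\leq 1216$.

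The genuine gap is at $d=6,7$, exactly the step you flagged, and flagging it does not close it: the structure you anticipate does not exist there. For $d=6$ the characterisation (Theorem~\ref{thm:dim6_conditions}) involves three independent parameters $\delta_1,\delta_2,\delta_3$, with $N=\delta_1+7$ and $6!\,V=2+2\delta_1+2\delta_2+\delta_3$, so $(N,V)$ leaves one degree of freedom and no inequality of the theorem can be rewritten as a relation between $N$ and $V$ alone. Worse, the ``single inequality plus $V\leq 2^d$'' strategy provably falls short: the only inequality bounding $\delta_1$ from above is $462+14\delta_2>42\delta_1+5\delta_3$, and substituting $2\delta_2\leq 6!\cdot 2^6-2-2\delta_1-\delta_3$ yields only $56\delta_1+12\delta_3<323008$, i.e.\ roughly $\delta_1<5768$, far weaker than the needed $\delta_1\leq 722$ (equivalently $N\leq 729$). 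What the paper actually does is take a positive linear combination of \emph{all three} linear inequalities in case~(iii): with weights $9,2,91$ the $\delta_2$- and $\delta_3$-terms cancel exactly, leaving $128\delta_1<92416$, i.e.\ $\delta_1<722$; for $d=7$ the weights $27,30,13$ applied to the three inequalities of Theorem~\ref{thm:dim7_conditions} give $640\delta_1<1394560$, i.e.\ $\delta_1<2179$, whence $N=\delta_1+8\leq 3^7$. The volume bound $V\leq 2^d$ plays no role and cannot substitute for any of the three inequalities. Finally, your picture that ``the remaining extremal branch is precisely $N=3^d$'' is also wrong in these dimensions: besides the single point of case~(i), the characterisations contain a one-parameter family, case~(ii) (for $d=6$: $994\delta_1=27\delta_3+81872$, $497\delta_2=218\delta_3+106407$, $\delta_3<23548$), which must be, and easily is, checked separately to give $\delta_1<722$ there as well.
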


\noindent
We conjecture that Corollary~\ref{cor:cube} holds for all $d$; if true the result would be sharp, since equality is achieved by the $d$-dimensional cube $\{-1,1\}^d$ corresponding to the anticanonical polytope of $(\Proj^1)^d$. In~\S\S\ref{sec:dim23}--\ref{sec:dim5} we also give characterisations for the real case when $d\leq 5$. These characterisations differ from the case of CL-polytopes in Theorem~\ref{thm:canonical_line} simply by flipping both of the (equivalent) inequalities when $d=2$ or~$3$, and by flipping the first inequality when $d=4$ or~$5$. In~\S\ref{sec:dim6_7_real}, Theorems~\ref{thm:dim6_real_conditions} and~\ref{thm:dim7_real_conditions}, we give characterisations when $d=6$ and $7$; again these differ from the case for CL-polytopes by flipping certain inequalities.

It was conjectured in~\cite{MHNOH11} that for any reflexive polytope $P$, the roots $z$ of $L_P$ satisfy the \emph{half-strip condition}
\begin{equation}\tag{HS}\label{eq:HS}
-\frac{d}{2}\leq\Re{z}\leq\frac{d}{2}-1.
\end{equation}
In~\S\ref{sec:canonical_strip} we prove the following:

\begin{thm}\label{thm:reflexive_strip}
Let $P$ be a $d$-dimensional reflexive polytope and let $z\in\C$ be a root of $L_P$. Inequality~\eqref{eq:HS} holds when $d\leq 5$, or for arbitrary $d$ when $P$ is real.
\end{thm}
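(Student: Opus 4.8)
The plan is to use the symmetry forced by reflexivity to halve the problem and then reduce it to an elementary question about a polynomial of half the degree. Since $P$ is reflexive, Macdonald reciprocity gives $L_P(-m-1)=(-1)^dL_P(m)$, so $z$ is a root of $L_P$ exactly when $-1-z$ is; as $L_P$ has real coefficients the root set is also stable under conjugation, hence invariant under reflection in the line $\Re{z}=-1/2$. In particular the upper bound $\Re{z}\leq d/2-1$ in \eqref{eq:HS} follows from the lower bound $\Re{z}\geq -d/2$ via $z\mapsto -1-z$. I would make this concrete through the substitution $w=z+1/2$: writing $f(w):=L_P(w-1/2)$, reciprocity becomes $f(-w)=(-1)^df(w)$, so $f$ is even when $d$ is even and odd when $d$ is odd, and in either case $f(w)=w^{\varepsilon}g(w^2)$ for a real polynomial $g$ of degree $\lfloor d/2\rfloor$, where $\varepsilon\equiv d\pmod 2$.

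Under $w\mapsto w^2$ the half-strip condition $\abs{\Re{w}}\leq (d-1)/2$ becomes the statement that every root $u$ of $g$ lies in the closed parabolic region $\Re{u}+(\Im{u})^2/(d-1)^2\leq (d-1)^2/4$; for a real root $u\geq 0$ this is simply $u\leq (d-1)^2/4$. A useful input here is that $g$ is positive at the points $u=(2m+1)^2/4$ for every integer $m\geq 0$, since there $g\big((2m+1)^2/4\big)=L_P(m)/(m+1/2)^{\varepsilon}>0$ by Lemma~\ref{lem:main_lemma} and the positivity of $L_P$ on $\N$.

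For $d\leq 5$ the palindromy $\delta_i=\delta_{d-i}$ of the $\delta$-vector leaves only one or two free entries, so $g$ is linear (when $d=2,3$) or quadratic (when $d=4,5$). I would write the coefficients of $g$ explicitly as linear expressions in $\vol{P}$ and the lattice-point data (equivalently, in $\delta_1$ and $\delta_2$), exactly as in the root computations of the earlier sections, and then verify the parabolic condition directly. In the quadratic case this reduces to checking that the vertex of $g$ lies left of $u=(d-1)^2/4$ and that $g$ is nonnegative there (for real $u$), together with the inequality $\Re{u}+(\Im{u})^2/(d-1)^2\leq (d-1)^2/4$ for a conjugate pair; all of these become inequalities among the $\delta_i$ that hold unconditionally once one uses $\delta_0=\delta_d=1$, the positivity $\delta_i\geq 0$, and Hibi's Lower Bound Theorem $\delta_1\leq\delta_i$. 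This is the most computational part, but each dimension is a finite check.

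The real case for arbitrary $d$ is where I expect the genuine difficulty, since the known strip \eqref{eq:S} combined with reflexive symmetry only gives roots in $[-d,d-1]$, weaker than \eqref{eq:HS} by roughly a factor of two. Now all roots $u$ of $g$ are real and nonnegative, so the task is to show $\max u\leq (d-1)^2/4$, equivalently that $L_P$ has no real root exceeding $d/2-1$. My approach would be to prove the sharper positivity statement $L_P(x)>0$ for all real $x\geq d/2-1$: averaging the two palindromic expressions $L_P(x)=\sum_j\delta_j\binom{d+x-j}{d}=\sum_j\delta_j\binom{x+j}{d}$ gives $L_P(x)=\tfrac12\sum_j\delta_j\big(\binom{d+x-j}{d}+\binom{x+j}{d}\big)$, and for $x\geq d/2-1$ each bracket is nonnegative (the factor $\binom{d+x-j}{d}$ is already $\geq 0$ once $x\geq j-1$, so one pairs $j$ with $d-j$ and dominates the single possibly-negative term by its partner), while the extreme pair coming from $\delta_0=\delta_d=1$ is strictly positive. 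Since $L_P(x)\to+\infty$, this forces every real root strictly below $d/2-1$, and the lower bound follows by symmetry. The main obstacle is making the bracket estimate uniform in $d$ and handling the central factor $w^{\varepsilon}$ when $d$ is odd, which is the step I would expect to require the most care.
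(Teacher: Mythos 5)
Your proposal is correct, and it splits into two halves that compare differently with the paper. For real roots in arbitrary dimension your argument is essentially the paper's own proof of Theorem~\ref{thm_h}(\ref{item:thm_h_1}): averaging the two palindromic expansions of $L_P$ produces exactly the paper's pairing $N_j(x)=d!\left({d+x-j\choose d}+{x+j\choose d}\right)$, and the nonnegativity of each pair for $x\geq d/2-1$ is proved there by extracting the common factor $\prod_{t=1-j}^{j}(x+t)\geq 0$ and observing that the remaining symmetric factor has nonnegative coefficients in the variable $w=x+1/2$; your factorwise domination ($x+t\geq\abs{x-t+1}$ whenever $t\geq 1$ and $2x+1\geq 0$) establishes the same inequality, with the required strictness coming from the $j=0$ bracket as you say. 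Where you genuinely diverge is the non-real roots in dimensions $4$ and $5$: the paper handles the case of roots $-1/2\pm a\pm bi$, $ab\neq 0$, in Propositions~\ref{prop:canonical_strip_4} and~\ref{prop:canonical_strip_5} by bounding the modulus $r$ of the roots of the associated quadratic, which forces a two-case analysis of the region where the discriminant is negative. Your parabola criterion bypasses this entirely: writing the quadratic from~\eqref{eq:dim4_alpha} (resp.~\eqref{eq:dim5_alpha}) as $g(u)=au^2+bu+c$ with $a>0$, a non-real conjugate pair of roots satisfies $\Re{u}+(\Im{u})^2/(4c_0^2)\leq c_0^2$, where $c_0=(d-1)/2$, if and only if $4ac\leq(4ac_0^2+b)^2$, and your claim that this holds unconditionally is in fact true, although you assert rather than verify it. For $d=4$ it reduces to
\[
(2+2\delta_1+\delta_2)(210-30\delta_1+9\delta_2)\leq(122+50\delta_1+13\delta_2)^2,
\]
whose difference expands to $14464+11840\delta_1+2944\delta_2+2560\delta_1^2+160\delta_2^2+1312\delta_1\delta_2\geq 0$, and for $d=5$ it reduces to
\[
(1+\delta_1+\delta_2)(1689-71\delta_1+9\delta_2)\leq(147+67\delta_1+27\delta_2)^2,
\]
whose difference is $19920+18080\delta_1+6240\delta_2+4560\delta_1^2+720\delta_2^2+3680\delta_1\delta_2\geq 0$. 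So nonnegativity of $\delta_1,\delta_2$ alone suffices---you need neither Hibi's Lower Bound Theorem nor the discriminant constraint---while real roots $u\geq 0$ of $g$ are covered by your first part and negative real roots give $\Re{z}=-1/2$. What your route gives up is only the sharper conclusion $\abs{a}<3/2$ of the paper's propositions; what it buys is an exact criterion and a computation-light replacement for the most technical part of the paper's argument.
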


\noindent
This result halves the bounds of~\eqref{eq:S} in the case of reflexive polytopes. Ohsugi--Shibata~\cite{OS12} have found a $34$-dimensional reflexive polytope failing to satisfy~\eqref{eq:HS}. In~\S\ref{subsec:find_P_from_delta} we give a general method for determining whether a palindromic $\delta$-vector with $\delta_1=1$ arises from a lattice polytope, and use this in~\S\ref{subsec:dim_10_example} to give a $10$-dimensional example that fails to satisfy~\eqref{eq:HS}. This corresponds to a terminal Gorenstein fake weighted projective space. It seems probable that Theorem~\ref{thm:reflexive_strip} also holds when $d=6$ and~$7$, and possible that $d=10$ is the smallest dimension in which~\eqref{eq:HS} fails, although we do not prove this.

\subsection*{A hierarchy of hypotheses}
Golyshev~\cite{Gol09} introduced two additional bounds on the roots $z\in\C$ of the Ehrhart polynomial $L_P$, which he called the \emph{canonical strip hypothesis} and the \emph{narrowed canonical strip hypothesis}. These correspond, respectively, to:
\begin{align}
\tag{CS}\label{eq:CS}
-1&<\Re{z}<0\\
\tag{NCS}\label{eq:NCS}
-\frac{d}{d+1}&\leq\Re{z}\leq\frac{d}{d+1}-1.
\end{align}

\noindent
By a slight modification of the proof of Proposition~\ref{prop:CL_volume} above, we obtain:

\begin{prop}\label{prop:real_reflexive_CS_volume}
Let $P$ be a $d$-dimensional real reflexive polytope such that the roots $z\in\C$ of $L_P$ satisfy~\eqref{eq:CS}. Then $\vol{P}\geq 2^d$.
\end{prop}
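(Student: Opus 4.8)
The plan is to follow the proof of Proposition~\ref{prop:CL_volume} almost verbatim, merely replacing the complex conjugate pairs lying on the line $\Re{z}=-1/2$ by \emph{mirror pairs} of real roots; this single change reverses the direction of the key inequality and hence of the conclusion on $\vol{P}$.

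First I would set up the same factorisation. Writing $L_P(z)=c_dz^d+\ldots+c_0$ with $c_d=\vol{P}$ and $c_0=1$, Vieta's formulas give $\prod_{j=1}^dz_j=(-1)^d/\vol{P}$. Since $P$ is real, all roots $z_1,\ldots,z_d$ are real; since $P$ is reflexive, Macdonald reciprocity makes the root multiset invariant under the reflection $z\mapsto-1-z$ about $\Re{z}=-1/2$, exactly as recorded before the statement of the reflexive characterisation lemma. Consequently the real roots split into mirror pairs $\{-1/2+t,\,-1/2-t\}$, the product of such a pair being $(-1/2+t)(-1/2-t)=1/4-t^2$.

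Next I would bring in condition~\eqref{eq:CS}. A real root $z=-1/2+t$ satisfies $-1<z<0$ precisely when $t\in(-1/2,1/2)$, i.e.\ $t^2<1/4$, so each mirror-pair product obeys $0<1/4-t^2\leq 1/4$. This is an \emph{upper} bound, the reverse of the lower bound $1/4+\alpha_j^2\geq 1/4$ used in Proposition~\ref{prop:CL_volume}. The finish is then a case split on parity. When $d=2k$ the $d$ roots form $k$ mirror pairs, so $1/\vol{P}=\prod_{j=1}^k(1/4-t_j^2)\leq 4^{-k}=2^{-d}$, giving $\vol{P}\geq 2^d$. When $d=2k+1$ the reflection symmetry forces a root at the fixed point $-1/2$; pulling out one copy and pairing the remaining $2k$ yields $-1/\vol{P}=(-1/2)\prod_{j=1}^k(1/4-t_j^2)$, whence again $1/\vol{P}\leq 2^{-d}$ and $\vol{P}\geq 2^d$.

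The only delicate point — the main obstacle, such as it is — is the odd-dimensional bookkeeping: I must check that an odd total number of real roots, invariant under $z\mapsto-1-z$, really does force a root at $-1/2$ of odd multiplicity, leaving exactly one unpaired factor $-1/2$ while any further (even number of) copies of $-1/2$ pair off harmlessly as factors $1/4$. Verifying that this gives precisely the sign $(-1)^d=-1$ needed to match $\prod_{j=1}^dz_j=(-1)^d/\vol{P}$ is the one spot where a careless treatment could introduce a sign error.
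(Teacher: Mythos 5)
Your proof is correct and is exactly what the paper intends: the paper dispatches this proposition with the single remark that it follows ``by a slight modification of the proof of Proposition~\ref{prop:CL_volume}'', and your argument is precisely that modification, replacing each conjugate-pair factor $\tfrac{1}{4}+\alpha_j^2\geq\tfrac{1}{4}$ by the mirror-pair factor $\tfrac{1}{4}-t_j^2\leq\tfrac{1}{4}$, with~\eqref{eq:CS} guaranteeing each factor is positive so the product inequality is legitimate. Your careful treatment of the odd-dimensional case (odd multiplicity of the root $-1/2$ and the sign $(-1)^d$ in Vieta's formula) is sound and in fact more explicit than anything in the paper.
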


We have a hierarchy of implications
\begin{center}
\eqref{eq:CL}\ $\Longrightarrow$\ \eqref{eq:NCS}\ $\Longrightarrow$\ \eqref{eq:CS}\ $\Longrightarrow$\ \eqref{eq:HS}\ $\Longrightarrow$\ \eqref{eq:S}.
\end{center}
These hypotheses provide meaningful ways of partitioning the space of reflexive polytopes (or, more generally, Fano polytopes) by their $\delta$-vectors. In higher dimensions, where the number of reflexive polytopes is vast, this becomes an essential tool for studying their classification.

\section{Dimensions two and three}\label{sec:dim23}
We begin with a general observation. Let $L_1:=\{z\in\C\mid\Re{z}=-1/2\}$ and $L_2:=\{z\in\C\mid\Im{z}=0\}$ be lines on the complex plane. By Macdonald's Reciprocity Theorem we see that any reflexive polytope $P$ of dimension $d$ satisfies
\[
L_P(-m-1)=(-1)^d L_P(m).
\]
In particular the roots of $L_P$ are distributed symmetrically with respect to the lines $L_1$ and, via complex conjugation, $L_2$. When $\dim{P}=2$ or $3$ we obtain the following two results (cf.~\cite[Proposition~1.9(i)]{BHW07}):

\begin{prop}\label{prop:dim_2_dicotomy}
Let $P$ be a $2$-dimensional reflexive polytope with $\delta$-vector $(1,\delta_1,1)$. Then either
\begin{enumerate}
\item\label{cond:dim_2_Riemannian}
the roots of $L_P$ are $\{-1/2\pm bi\}$, $b\in\R$, i.e.~$P$ is a CL-polytope, or
\item\label{cond:dim_2_real}
the roots of $L_P$ are $\{a,-1-a\}$, $a\in\R$, i.e.~$P$ is real.
\end{enumerate}
Furthermore, the following are equivalent:

\vspace{0.4em}
\begin{tabular}{lp{2.5cm}p{2.5cm}cl}
\emph{\phantom{i}(ia)} $P$ is a CL-polytope;&
\emph{\phantom{i}(ib)} $\delta_1\leq 6$;&
\emph{\phantom{i}(ic)} $\vol{P}\leq 4$;&
and&
\emph{\phantom{i}(id)} $\abs{P\cap\Z^2}\leq 9$.\\
\emph{(iia)} $P$ is real;&
\emph{(iib)} $\delta_1\geq 6$;&
\emph{(iic)} $\vol{P}\geq 4$;&
and&
\emph{(iid)} $\abs{P\cap\Z^2}\geq 9$.
\end{tabular}
\vspace{0.4em}

\noindent
In addition, in case~\eqref{cond:dim_2_real} one has that $-1<a<0$, i.e.\ $P$ satisfies~\eqref{eq:CS}.
\end{prop}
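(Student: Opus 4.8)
The plan is to compute the Ehrhart polynomial explicitly and then read off every assertion from the quadratic formula. First I would apply Lemma~\ref{lem:main_lemma} to the palindromic $\delta$-vector $(1,\delta_1,1)$. Writing $c:=2+\delta_1$, a short calculation gives
\[
L_P(m)=\frac{c}{2}m^2+\frac{c}{2}m+1,
\]
so that $L_P(z)=0$ is equivalent to the quadratic $cz^2+cz+2=0$, whose roots are
\[
z=-\frac{1}{2}\pm\frac{\sqrt{c(c-8)}}{2c}.
\]
Since $\delta_1\geq 0$ we have $c\geq 2>0$, so the nature of the roots is governed entirely by the sign of the discriminant $c(c-8)$. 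This immediately yields the dichotomy: if $c<8$ the roots form a complex-conjugate pair $-1/2\pm bi$, i.e.~condition~\eqref{cond:dim_2_Riemannian}; if $c>8$ they form a real pair symmetric about $-1/2$, namely $\{a,-1-a\}$ with $a=-1/2+\sqrt{c(c-8)}/(2c)$, i.e.~condition~\eqref{cond:dim_2_real}; and if $c=8$ they coincide at the double root $-1/2$, which belongs to both cases. As $c<8\iff\delta_1<6$, $c>8\iff\delta_1>6$, and $c=8\iff\delta_1=6$, this simultaneously establishes the equivalences (ia)$\iff$(ib) and (iia)$\iff$(iib).

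Next I would translate the threshold $\delta_1=6$ into the volume and lattice-point statements using the standard facts recalled after Lemma~\ref{lem:main_lemma}. The identity $\delta_0+\delta_1+\delta_2=2!\vol{P}$ gives $\vol{P}=(2+\delta_1)/2=c/2$, whence $\delta_1\leq 6\iff\vol{P}\leq 4$, proving (ib)$\iff$(ic), and likewise the reversed inequalities yield (iib)$\iff$(iic). Similarly $\delta_1=\abs{P\cap\Z^2}-3$ from item~\eqref{item:delta_1} gives $\delta_1\leq 6\iff\abs{P\cap\Z^2}\leq 9$, proving (ib)$\iff$(id) and its reverse (iib)$\iff$(iid). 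These steps are purely arithmetic.

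Finally, for the claim that $-1<a<0$ in the real case~\eqref{cond:dim_2_real}, I would bound the half-width $\beta:=\sqrt{c(c-8)}/(2c)$ of the root pair $-1/2\pm\beta$. Both roots lie in the open interval $(-1,0)$ precisely when $\beta<1/2$, and since $c\geq 8$ here this is equivalent, after squaring, to $c(c-8)<c^2$, i.e.~to $-8c<0$, which holds for every $c>0$. Hence $0\leq\beta<1/2$ throughout the real regime, so both roots—and in particular $a$—lie strictly in $(-1,0)$, giving~\eqref{eq:CS}. I do not anticipate a genuine obstacle: the whole argument is an explicit computation specific to the two-dimensional case, and the only point needing mild care is the boundary value $\delta_1=6$, where the double root $-1/2$ must be seen to satisfy both~\eqref{cond:dim_2_Riemannian} and~\eqref{cond:dim_2_real} consistently with the non-strict inequalities appearing in the two rows of equivalences.
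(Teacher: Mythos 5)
Your proposal is correct and follows essentially the same route as the paper: compute $L_P$ explicitly from Lemma~\ref{lem:main_lemma} and read the dichotomy, the equivalences, and the bound $-1<a<0$ off the quadratic formula (your roots $-\tfrac{1}{2}\pm\sqrt{c(c-8)}/(2c)$ are exactly the paper's $-\tfrac{1}{2}\pm\tfrac{1}{2}\sqrt{(\delta_1-6)/(\delta_1+2)}$ with $c=\delta_1+2$). The only difference is one of completeness, not method: the paper proves only the last assertion and treats the rest as immediate from the root formula, while you write out those immediate steps, including the boundary case $\delta_1=6$.
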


\begin{proof}
We only give a proof of the last assertion. It follows from Lemma~\ref{lem:main_lemma} that
\[
L_P(z)=\frac{1}{2}\left(\delta_1+2\right)z^2+\frac{1}{2}\left(\delta_1+2\right)z+1.
\]
The roots of $L_P$ are
\[
-\frac{1}{2}\pm\frac{1}{2}\sqrt{\frac{\delta_1-6}{\delta_1+2}}.
\]
Since we have that $\delta_1\geq 6$ in case~\eqref{cond:dim_2_real}, the roots of $L_P(z)$ satisfy~\eqref{eq:CS}.
\end{proof}

\begin{prop}\label{prop:dim_3_dicotomy}
Let $P$ be a $3$-dimensional reflexive polytope with $\delta$-vector $(1,\delta_1,\delta_1,1)$. Then either
\begin{enumerate}
\item\label{cond:dim_3_Riemannian}
the roots of $L_P$ are $\{-1/2,-1/2\pm bi\}$, $b\in\R$, i.e.~$P$ is a CL-polytope, or
\item\label{cond:dim_3_real}
the roots of $L_P$ are $\{-1/2,a,-1-a\}$, $a\in\R$, i.e.~$P$ is real.
\end{enumerate}
Furthermore, the following are equivalent:

\vspace{0.4em}
\begin{tabular}{lp{2.5cm}p{2.5cm}cl}
\emph{\phantom{i}(ia)} $P$ is a CL-polytope;&
\emph{\phantom{i}(ib)} $\delta_1\leq 23$;&
\emph{\phantom{i}(ic)} $\vol{P}\leq 8$;&
and&
\emph{\phantom{i}(id)} $\abs{P\cap\Z^3}\leq 27$.\\
\emph{(iia)} $P$ is real;&
\emph{(iib)} $\delta_1\geq 23$;&
\emph{(iic)} $\vol{P}\geq 8$;&
and&
\emph{(iid)} $\abs{P\cap\Z^3}\geq 27$.
\end{tabular}
\vspace{0.4em}

\noindent
In addition, in case~\eqref{cond:dim_3_real} one has that $-1<a<0$, i.e.\ $P$ satisfies~\eqref{eq:CS}.
\end{prop}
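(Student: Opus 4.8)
The plan is to make everything depend on the sign of a single discriminant. First I would feed the palindromic $\delta$-vector $(1,\delta_1,\delta_1,1)$ into Lemma~\ref{lem:main_lemma}; a short binomial expansion and collection of terms gives the cubic
\[
L_P(z)=\frac{1+\delta_1}{3}z^3+\frac{1+\delta_1}{2}z^2+\frac{13+\delta_1}{6}z+1,
\]
whose leading coefficient $\tfrac{1+\delta_1}{3}=\vol{P}$ is consistent with $\delta_0+\cdots+\delta_3=3!\vol{P}$. Macdonald reciprocity, in the polynomial form $L_P(-z-1)=-L_P(z)$ noted at the start of this section, forces $L_P(-1/2)=0$, so I can divide out the guaranteed central root and obtain the factorisation $L_P(z)=\tfrac{1+\delta_1}{3}(z+\tfrac12)\,g(z)$, where direct division yields the monic quadratic $g(z)=z^2+z+\tfrac{6}{1+\delta_1}$.

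The dichotomy is then read off from the two roots of $g$. Their sum is $-1$, so they are automatically symmetric about the line $\Re{z}=-1/2$: being the roots of a real quadratic, they form either a complex-conjugate pair, necessarily $-1/2\pm bi$, or a real pair, necessarily $a,-1-a$. Together with the central root $-1/2$, this is precisely alternatives~\eqref{cond:dim_3_Riemannian} and~\eqref{cond:dim_3_real}. Which case occurs is decided by the discriminant of $g$, namely $1-\tfrac{24}{1+\delta_1}=\tfrac{\delta_1-23}{1+\delta_1}$. Since $1+\delta_1>0$, this is negative exactly when $\delta_1<23$ (the CL case), positive exactly when $\delta_1>23$ (the real case), and zero when $\delta_1=23$ (a triple root at $-1/2$, lying in both cases). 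This establishes the equivalences (ia)$\Leftrightarrow$(ib) and (iia)$\Leftrightarrow$(iib).

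The remaining equivalences are a translation of $\delta_1\le 23$ into geometric data. Ehrhart's relations give $\vol{P}=\tfrac{1+\delta_1}{3}$ and, via item~\eqref{item:delta_1}, $\abs{P\cap\Z^3}=\delta_1+4$; hence $\delta_1\le 23$ is equivalent to $\vol{P}\le 8$ and to $\abs{P\cap\Z^3}\le 27$, yielding (ic), (id) and their flipped counterparts (iic), (iid). For the final assertion I would use $g$ once more: the product of its two roots is $\tfrac{6}{1+\delta_1}>0$ while their sum is $-1$, so in the real case, where these roots are $a$ and $-1-a$, the product $a(-1-a)>0$ together with the negative sum forces $a$ and $-1-a$ to be both negative; thus $-1<a<0$, which is~\eqref{eq:CS}.

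I do not expect a serious obstacle, since the proposition is essentially the content of this one cubic. The single point needing care is the boundary value $\delta_1=23$, where the two alternatives degenerate into a common triple root at $-1/2$; this is exactly why the equivalences are stated with overlapping non-strict inequalities ($\le 23$ and $\ge 23$) rather than as a strict trichotomy, and I would make sure the discriminant analysis treats the equality case explicitly so that the polytope with $\delta_1=23$ is correctly counted in both~\eqref{cond:dim_3_Riemannian} and~\eqref{cond:dim_3_real}.
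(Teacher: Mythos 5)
Your proof is correct and follows essentially the same route as the paper: both compute $L_P$ from Lemma~\ref{lem:main_lemma}, exploit the forced root at $z=-1/2$, and reduce the whole proposition to the sign of a quadratic condition with threshold $\delta_1=23$. Your explicit factorisation $L_P(z)=\tfrac{1+\delta_1}{3}\bigl(z+\tfrac12\bigr)\bigl(z^2+z+\tfrac{6}{1+\delta_1}\bigr)$ is exactly what the paper's constant-term comparisons amount to, and your Vieta-type sign argument for $-1<a<0$ is a minor (slightly cleaner) variant of the paper's explicit root formula $-\tfrac12\pm\tfrac12\sqrt{(\delta_1-23)/(\delta_1+1)}$.
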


\begin{proof}
It follows from Lemma~\ref{lem:main_lemma} that
\[
L_P(z)=\frac{1}{3}\left(\delta_1+1\right)z^3+\frac{1}{2}\left(\delta_1+1\right)z^2+\frac{1}{6}\left(\delta_1+13\right)z+1.
\]
On the other hand, $P$ is a CL-polytope if and only if there exists $b\in\R$ such that
\[
L_P(z)=\frac{2+2\delta_1}{3!}\left(z+\frac{1}{2}\right)\left(z+\frac{1}{2}-bi\right)\left(z+\frac{1}{2}+bi\right).
\]
By comparing the constant term in the two expressions, we see that $P$ is a CL-polytope if and only if there exists $b\in\R$ such that
\[
4(\delta_1+1)b^2+\delta_1-23=0.
\]
Consequently, $P$ is a CL-polytope if and only if $\delta_1\leq 23$.

A reflexive polytope $P$ is real if and only if there exists $a\in\R$ such that
\[
L_P(z)=\frac{2+2\delta_1}{3!}\left(z+\frac{1}{2}\right)\left(z-a\right)\left(z+1+a\right).
\]
Again, comparing the constant term gives $(1+\delta_1)a^2+(1+\delta_1)a+6=0$, hence $P$ is real if and only if $\delta_1\geq 23$. Moreover, since the solutions to this quadratic are given by
\[
-\frac{1}{2}\pm\frac{1}{2}\sqrt{\frac{\delta_1-23}{\delta_1+1}},
\]
we see that $-1<a<0$ and~\eqref{eq:CS} is satisfied.
\end{proof}

\section{Dimension four}\label{sec:dim4}
Let $P$ be a $4$-dimensional reflexive polytope with $\delta$-vector $(1,\delta_1,\delta_2,\delta_1,1)$. The roots of $L_P$ fall into four possible cases:
\begin{enumerate}[label=(\alph*), ref=\alph*]
\item\label{item:dim4_case2}
The roots of $L_P$ are $\{-1/2\pm b_1i,-1/2\pm b_2i\}$, where $b_1,b_2\in\R$. In this case $P$ is a CL-polytope.
\item\label{item:dim4_case3}
The roots of $L_P$ are $\{-1/2\pm a_1,-1/2\pm a_2\}$, where $a_1,a_2\in\R$. In this case $P$ is real.
\item\label{item:dim4_case4}
The roots of $L_P$ are $\{-1/2\pm a,-1/2\pm bi\}$, where $a,b\in\R$.
\item\label{item:dim4_case1}
The roots of $L_P$ are $\{-1/2+a\pm bi,-1/2-a\pm bi\}$, where $a,b\in\R\setminus\{0\}$.
\end{enumerate}

We shall require the following trivial lemma.
Although the proof is obvious, we note that one approach that generalises well to higher degree is to employ Descartes' rule of signs.

\begin{lemma}\label{lem:quadratic}
Let $A,B,C \in \R$ be real numbers, $A>0$. Then all solutions of the equation
\[
Az^2+Bz+C=0
\]
are non-negative real numbers if and only if either $B=C=0$, or $B<0$, $C \geq 0$, and the discriminant $\Disc\geq 0$.
\end{lemma}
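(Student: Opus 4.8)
The plan is to prove the lemma by analysing the two factorisations of a monic-scaled real quadratic and then confirming the equivalence via the behaviour of the quadratic on $[0,\infty)$. Since $A>0$, the statement ``all solutions are non-negative real numbers'' splits naturally into the degenerate case where one or both roots vanish and the generic case of two strictly positive real roots. First I would handle the degenerate case: if both roots are $0$ then $Az^2+Bz+C=Az^2$, forcing $B=C=0$, and conversely $B=C=0$ gives the double root $z=0$, which is non-negative. This accounts for the first branch of the disjunction.

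For the main branch I would assume we are not in the case $B=C=0$ and show that both roots are non-negative reals if and only if $B<0$, $C\geq 0$, and $\Disc\geq 0$. The condition $\Disc = B^2 - 4AC \geq 0$ is exactly the requirement that the roots be real rather than a genuine complex-conjugate pair, so this handles the reality. Given real roots $z_1,z_2$, Vieta's formulas give $z_1+z_2 = -B/A$ and $z_1 z_2 = C/A$; since $A>0$, non-negativity of the sum forces $B\leq 0$ and non-negativity of the product forces $C\geq 0$. To rule out the already-treated degenerate case I would note that $B=0$ together with $C\geq 0$ and $\Disc\geq 0$ forces $C=0$, i.e.\ $B=C=0$, so outside that case we indeed have the strict inequality $B<0$. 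The converse direction is the symmetric observation: $\Disc\geq 0$ gives real roots, and $B<0$ with $C\geq 0$ makes their sum and product non-negative, whence neither root can be negative.

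An alternative packaging, which is the one the remark points toward and which generalises to higher degree, is to apply Descartes' rule of signs to $f(z)=Az^2+Bz+C$. All roots being non-negative real is equivalent to $f$ having no positive real roots together with $\Disc\geq 0$ (reality) and $C/A\geq 0$ (no negative roots, via the product of roots); the number of positive real roots is controlled by the sign changes in the coefficient sequence $(A,B,C)$. With $A>0$ fixed, demanding that all real roots be non-negative translates into the sign pattern requiring $B\leq 0$ and $C\geq 0$, matching the stated conditions once the degenerate case is separated out. I would present the Vieta-based argument as the main line, since it is cleanest here, and remark that the Descartes formulation is what extends to the cubic and quartic analyses needed in later sections.

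I do not expect any genuine obstacle in this proof; the only point requiring a little care is the bookkeeping at the boundary, namely ensuring that the case $B=C=0$ is cleanly excised so that the two branches of the disjunction are mutually exclusive and jointly exhaustive, and that the weak versus strict inequalities ($B\leq 0$ from Vieta versus the stated $B<0$) are reconciled exactly through the observation that $B=0$ collapses into the degenerate branch.
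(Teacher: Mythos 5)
Your proof is correct. Note that the paper itself offers no proof of this lemma at all: it declares the statement obvious and merely remarks that Descartes' rule of signs is the approach that generalises to higher degree (as used for Lemmas~\ref{lem:cubic} and~\ref{lem:cubic2}). Your main line of argument is therefore genuinely different from what the paper gestures at: you work through Vieta's formulas, using $z_1+z_2=-B/A$ and $z_1z_2=C/A$ together with $\Disc\geq 0$ for reality, and you handle the boundary correctly --- in particular the observation that $B=0$, $C\geq 0$, $\Disc=-4AC\geq 0$ forces $C=0$, which is exactly what makes the two branches of the disjunction mesh (the case of one zero root and one positive root landing in the second branch, the double root at~$0$ in the first). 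The Vieta route is arguably the cleanest self-contained argument for the quadratic, and your closing remark that the Descartes packaging is what scales to the cubic analysis of~\S\ref{sec:dim67} accurately reflects why the paper frames the lemma the way it does; the only thing the paper's (implicit) approach buys over yours is that uniformity across degrees, at the cost of leaving the quadratic case as an unproved triviality.
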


\subsection{Four-dimensional CL-polytopes}
\begin{thm}\label{thm:dim_4_conditions}
Let $P$ be a $4$-dimensional reflexive polytope with $\delta$-vector $(1,\delta_1,\delta_2,\delta_1,1)$. Then $P$ is a CL-polytope (i.e.~we are in case~\eqref{item:dim4_case2} above) if and only if either
\begin{enumerate}
\item\label{item:dim_4_conditions_1}
$\delta_1=76$ and $\delta_2=230$, or
\item\label{item:dim_4_conditions_2}
$5\delta_2<14\delta_1+86$, $10\delta_1\leq 3\delta_2+70$, and $17(\delta_1+4\delta_2-15)^2\leq(17\delta_1+49)^2+(17\delta_2-94)^2$.
\end{enumerate}
In particular, if $P$ is a CL-polytope then $\delta_1\leq 76$ and $\delta_2\leq 230$.
\end{thm}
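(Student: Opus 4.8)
The plan is to reduce the CL condition to a statement about the roots of an auxiliary quadratic, and then read off the inequalities using Lemma~\ref{lem:quadratic}. First I would use Lemma~\ref{lem:main_lemma} to write $L_P$ explicitly as a quartic in $z$ whose coefficients are affine in $\delta_1$ and $\delta_2$. Since $P$ is reflexive its $\delta$-vector is palindromic, so Macdonald reciprocity gives $L_P(-z-1)=L_P(z)$; hence the substitution $w=z+1/2$ turns $L_P$ into an even polynomial $Aw^4+Bw^2+C$, where $A=\vol{P}=(2+2\delta_1+\delta_2)/24>0$ and one computes
\[
B=\frac{86+14\delta_1-5\delta_2}{48},\qquad C=\frac{70-10\delta_1+3\delta_2}{128}.
\]
Setting $u=w^2$, a root $z$ lies on the canonical line $\Re{z}=-1/2$ precisely when $w$ is purely imaginary, i.e.\ when $u=w^2$ is a non-positive real number. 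Thus $P$ is a CL-polytope (case~\eqref{item:dim4_case2}) if and only if both roots of the quadratic $Au^2+Bu+C=0$ are real and non-positive.

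Next I would apply Lemma~\ref{lem:quadratic} after the sign change $v=-u$, which converts ``both roots non-positive'' into ``both roots of $Av^2-Bv+C=0$ non-negative.'' The lemma then splits into exactly two cases: either $B=C=0$, or $B>0$, $C\geq 0$, and $\Disc=B^2-4AC\geq 0$. The degenerate case $B=C=0$ is a pair of linear equations in $\delta_1,\delta_2$ whose unique solution is $\delta_1=76$, $\delta_2=230$, giving case~\eqref{item:dim_4_conditions_1}. The inequalities $B>0$ and $C\geq 0$ rearrange directly into the first two inequalities $5\delta_2<14\delta_1+86$ and $10\delta_1\leq 3\delta_2+70$ of case~\eqref{item:dim_4_conditions_2}.

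The remaining step, and the only genuinely computational obstacle, is to check that $\Disc\geq 0$ is the stated third inequality. Clearing denominators, $B^2-4AC\geq 0$ is equivalent to $(86+14\delta_1-5\delta_2)^2\geq 3(2+2\delta_1+\delta_2)(70-10\delta_1+3\delta_2)$, and I would expand both sides and verify that the resulting quadratic form in $(\delta_1,\delta_2)$ equals $\tfrac{16}{17}$ times $(17\delta_1+49)^2+(17\delta_2-94)^2-17(\delta_1+4\delta_2-15)^2$. This is a routine but bookkeeping-heavy identity of degree-two polynomials; once it is confirmed, the positivity of the factor $\tfrac{16}{17}$ shows the discriminant condition coincides with the stated inequality, completing the characterisation.

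Finally, for the closing bounds I would argue directly from the necessary conditions $B>0$ and $C\geq 0$ in case~\eqref{item:dim_4_conditions_2}. Together these give $(10\delta_1-70)/3\leq\delta_2<(14\delta_1+86)/5$, and consistency of this chain forces $8\delta_1<608$, i.e.\ $\delta_1<76$; substituting back yields $\delta_2<(14\cdot 76+86)/5=230$. Combined with the equalities $\delta_1=76$, $\delta_2=230$ of case~\eqref{item:dim_4_conditions_1}, this establishes $\delta_1\leq 76$ and $\delta_2\leq 230$ whenever $P$ is a CL-polytope.
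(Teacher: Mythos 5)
Your proposal is correct and follows essentially the same route as the paper: Lemma~\ref{lem:main_lemma} to write $L_P$ explicitly, recentring at $z=-1/2$ to get a quadratic in the squared variable (the paper substitutes $z=-1/2+\beta i$ directly, which builds in your sign flip $v=-u$ automatically), Lemma~\ref{lem:quadratic} for the case split, and the same discriminant identity yielding the parabola inequality. Your claimed identity $(86+14\delta_1-5\delta_2)^2-3(2+2\delta_1+\delta_2)(70-10\delta_1+3\delta_2)=\tfrac{16}{17}\bigl((17\delta_1+49)^2+(17\delta_2-94)^2-17(\delta_1+4\delta_2-15)^2\bigr)$ checks out, and your derivation of the closing bounds $\delta_1\leq 76$, $\delta_2\leq 230$ matches the paper's (stated more tersely there).
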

\begin{proof}
From Lemma~\ref{lem:main_lemma} we obtain
\begin{equation}\label{eq:dim_4_Ehrhart}
 L_P(z)=(2+2\delta_1+\delta_2){z\choose 4}+(4+4\delta_1+2\delta_2){z\choose 3}
+(6+3\delta_1+\delta_2){z\choose 2}+(4+\delta_1){z\choose 1}+1.
\end{equation}
Substituting $z=-1/2+\beta i$ in~\eqref{eq:dim_4_Ehrhart} and multiplying through by $4!/2$ gives
\[
G(\beta):=\left(1+\delta_1+\frac{1}{2}\delta_2\right)\beta^4-\frac{1}{2}\left(43+7\delta_1-\frac{5}{2}\delta_2\right)\beta^2+\frac{1}{16}\left(105-15\delta_1+\frac{9}{2}\delta_2\right),
\]
and $G(\beta)=0$ if and only if $L_P(-1/2+\beta i)=0$. Regarding $G$ as a quadratic in $\beta^2$ we obtain the discriminant:
\[
\Disc=17\left(\delta_1+\frac{49}{17}\right)^2+17\left(\delta_2-\frac{94}{17}\right)^2-(\delta_1+4\delta_2-15)^2.
\]
The two cases follow from Lemma~\ref{lem:quadratic}. The first two inequalities in case~\eqref{item:dim_4_conditions_2} give $\delta_1<76$ and $\delta_2<230$; combining these bounds with case~\eqref{item:dim_4_conditions_1} we see that $\delta_1\leq 76$ and $\delta_2\leq 230$.
\end{proof}

The discriminant $\Delta$ gives rise to the parabola $17(\delta_1+4\delta_2-15)^2=(17\delta_1+49)^2+(17\delta_2-94)^2$ with focus at $(\delta_1,\delta_2)=(-49/17,94/17)$. The tangent at the point $(76,230)$ is given by $10\delta_1=3\delta_2+70$. Together these two equations (or, more accurately, the corresponding two inequalities in Theorem~\ref{thm:dim_4_conditions}\eqref{item:dim_4_conditions_2}) describe three regions in the positive quadrant. The inequality $5\delta_2<14\delta_1+86$ specifies which of these three regions contains the CL-polygons. This is illustrated in Figure~\ref{fig:3d_parabola_and_tangent}. There is clearly a choice for this inequality.

\begin{example}
Case~\eqref{item:dim_4_conditions_1} in Theorem~\ref{thm:dim_4_conditions} can certainly occur: the $4$-dimensional cube $\{-1,1\}^4$ is a reflexive polytope with $\delta$-vector $(1,76,230,76,1)$. Notice that this is not the only polytope with this $\delta$-vector: a second example is given in Example~\ref{eg:P12333}. In general it would be an interesting problem to classify all polytopes $P$ with $\delta$-vector equal to the $d$-dimensional cube $\{-1,1\}^d$, that is, with Ehrhart polynomial $L_{P}(m)=(2m+1)^d$.
\end{example}

Theorem~\ref{thm:dim_4_conditions} tells us that $\delta_i\leq\delta_i'$, for each $0\leq i\leq 4$, where $\delta'$ is the $\delta$-vector for the $4$-dimensional cube $\{-1,1\}^4$. In particular we have that $\vol{P}\leq 2^4$ and $\abs{P\cap\Z^4}\leq 3^4$.

\begin{example}\label{eg:delta_d_cube}
We shall calculate the $\delta$-vector for the $d$-dimensional cube $\{-1,1\}^d$. In general let $P$ and $Q$ be lattice polytopes such that $L_P(m)=L_Q(2m)$ for all $m\in\Z_{\geq 0}$, and let $(\delta_0,\delta_1,\ldots,\delta_d)$ be the $\delta$-vector of $Q$. Then
\[
\frac{\Ehr_Q(t)+\Ehr_Q(-t)}{2}=\sum_{m\geq 0}L_Q(2m)t^{2m}=\Ehr_P(t^2).
\]
Now
\begin{align*}
\frac{\Ehr_Q(t)+\Ehr_Q(-t)}{2}&=\frac{(1+t)^{d+1}(\delta_0+\delta_1t+\ldots+\delta_dt^d)+(1-t)^{d+1}(\delta_0-\delta_1t+\ldots+(-1)^d\delta_dt^d)}{2(1-t^2)^{d+1}}\\
&=\frac{\sum_{i=0}^{2d+1}t^i\sum_{j=0}^d\left({d+1\choose i-j}+(-1)^i{d+1\choose i-j}\right)\delta_j}{2(1-t^2)^{d+1}}\\
&=\frac{\sum_{i=0}^dt^{2i}\sum_{j=0}^d{d+1\choose 2i-j}\delta_j}{(1-t^2)^{d+1}},
\end{align*}
hence the $\delta$-vector of $P$ is given by
\[
\delta^P_i=\sum_{j=0}^d{d+1\choose 2i-j}\delta_j,\qquad\text{ for each }i\in\{0,\ldots,d\}.
\]
Now let $Q=\{0,1\}^d$ be the $d$-dimensional cube of unit volume, and $P=\{-1,1\}^d$, so that$L_P(m)=L_Q(2m)$. It is well-known that the $\delta$-vector of $Q$ can be expressed in terms of the Eulerian numbers, $\delta_i=A(d,i)$, hence
\[
\delta^P_i=\sum_{j=0}^d{d+1\choose 2i-j}A(d,j),\qquad\text{ for each }i\in\{0,\ldots,d\}.
\]
\end{example}

\begin{example}\label{eg:P12333}
Let $P=\sconv{(1,0,0,0),(0,1,0,0),(0,0,1,0),(0,0,0,1),(-2,-3,-3,-3)}$ be the polytope corresponding to weighted projective space $X=\Proj(1,2,3,3,3)$. The dual polytope
\[
\dual{P}=\sconv{(-1,-1,-1,-1),(5,-1,-1,-1),(-1,3,-1,-1),(-1,-1,3,-1),(-1,-1,-1,3)}
\]
corresponding to the anticanonical divisor $-K_X$ has:
\begin{align*}
4!\vol{\dual{P}}&=(-K_X)^4=\frac{(1+2+3+3+3)^4}{1\cdot 2\cdot 3\cdot 3\cdot 3}=384,\\
\abs{\dual{P}\cap\Z^4}&=h^0(X_P,-K_X)=\#\{\text{monomials of weighted degree }1+2+3+3+3=12\}=81.
\end{align*}
Equivalently, the value of $h^0(X_P,-K_X)$ is equal to the coefficient of $t^{12}$ in the Taylor expansion
\[
\frac{1}{(1-t)(1-t^2)(1-t^3)^3}=1+t+2t^2+5t^3+\ldots+81t^{12}+\ldots
\]
We have that $\delta_1=76$ and $\delta_2=230$, and $P$ is both a CL-polytope and a real polytope.
\end{example}

\begin{cor}\label{cor:dim_4_conditions}
Let $P$ be a $4$-dimensional reflexive polytope. Then $P$ is a CL-polytope (i.e.~we are in case~\eqref{item:dim4_case2} above) if and only if either
\begin{enumerate}
\item
$\vol{P}=16$ and $\abs{P\cap\Z^4}=81$, or
\item\label{item:dim_4_conditions}
$5\vol{P}<\abs{P\cap\Z^4}-1$, $2\abs{P\cap\Z^4}\le 9(\vol{P} + 2)$, and $\left(\abs{P\cap\Z^4}-1-4\vol{P}\right)^2\ge 16\vol{P}$.
\end{enumerate}
\end{cor}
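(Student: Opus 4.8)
The plan is to derive Corollary~\ref{cor:dim_4_conditions} from Theorem~\ref{thm:dim_4_conditions} by a single invertible change of variables, rewriting the conditions on $(\delta_1,\delta_2)$ as conditions on $\vol{P}$ and $\abs{P\cap\Z^4}$. The two descriptions are linked by elementary Ehrhart data recalled in the introduction: Ehrhart's formula gives $\delta_1=\abs{P\cap\Z^4}-5$, and since the $\delta$-vector $(1,\delta_1,\delta_2,\delta_1,1)$ is palindromic, summing its entries yields $2+2\delta_1+\delta_2=4!\vol{P}=24\vol{P}$. Solving these two relations produces the linear substitution
\[
\delta_1=\abs{P\cap\Z^4}-5,\qquad \delta_2=24\vol{P}-2\abs{P\cap\Z^4}+8,
\]
which is a bijection between admissible $(\delta_1,\delta_2)$ and $(\vol{P},\abs{P\cap\Z^4})$ pairs. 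Because it is invertible, each biconditional of Theorem~\ref{thm:dim_4_conditions} transfers directly.

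With this substitution in hand I would transform the three defining inequalities of Theorem~\ref{thm:dim_4_conditions}\eqref{item:dim_4_conditions_2}. The first, $5\delta_2<14\delta_1+86$, clears to $120\vol{P}<24\abs{P\cap\Z^4}-24$, i.e.\ $5\vol{P}<\abs{P\cap\Z^4}-1$. The second, $10\delta_1\leq 3\delta_2+70$, collapses to $16\abs{P\cap\Z^4}\leq 72\vol{P}+144$, i.e.\ $2\abs{P\cap\Z^4}\leq 9(\vol{P}+2)$. The third, the discriminant inequality, is the only genuinely computational step, and I would handle it not by manipulating the $17(\cdots)$ form but by re-reading the coefficients of $G(\beta)$ from the proof of Theorem~\ref{thm:dim_4_conditions}. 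Under the substitution the leading coefficient of $G$ (as a quadratic in $\beta^2$) becomes $12\vol{P}$, the middle coefficient becomes $-6(\abs{P\cap\Z^4}-1-5\vol{P})$, and the constant becomes $\tfrac34(9\vol{P}-2\abs{P\cap\Z^4}+18)$. Forming $\Disc=B^2-4AC$ and expanding, the cross terms cancel and one obtains the clean factorisation
\[
\Disc=36\bigl[(\abs{P\cap\Z^4}-1-4\vol{P})^2-16\vol{P}\bigr],
\]
so $\Disc\geq 0$ is exactly the stated inequality $(\abs{P\cap\Z^4}-1-4\vol{P})^2\geq 16\vol{P}$.

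This reformulation has the pleasant feature that it simultaneously reproves the sign hypotheses of Lemma~\ref{lem:quadratic} in the new variables: $B<0$ is precisely the first inequality and $C\geq 0$ precisely the second, so the three conditions of case~\eqref{item:dim_4_conditions} are exactly ``$B<0$, $C\geq 0$, $\Disc\geq 0$''. For the exceptional branch I would observe that the degenerate case $B=C=0$ of Lemma~\ref{lem:quadratic} forces $\abs{P\cap\Z^4}=1+5\vol{P}$ and $9\vol{P}-2\abs{P\cap\Z^4}+18=0$, whose unique simultaneous solution is $\vol{P}=16$, $\abs{P\cap\Z^4}=81$; this is the image of $\delta_1=76,\ \delta_2=230$ under the substitution and gives case~(i). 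The main obstacle is entirely bookkeeping in the discriminant identity, and working from the $G(\beta)$ coefficients rather than the $17(\cdots)$ expression is what keeps that algebra short and its factored form transparent.
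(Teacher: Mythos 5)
Your proposal is correct and is essentially the paper's own proof, which simply says to make the substitutions $2+2\delta_1+\delta_2=4!\vol{P}$ and $\delta_1=\abs{P\cap\Z^4}-5$ in Theorem~\ref{thm:dim_4_conditions}; your version just carries out that algebra explicitly (and your coefficient computations check out: $A=12\vol{P}$, $B=-6(\abs{P\cap\Z^4}-1-5\vol{P})$, $C=\tfrac34(9\vol{P}-2\abs{P\cap\Z^4}+18)$, with $\Disc=36\bigl[(\abs{P\cap\Z^4}-1-4\vol{P})^2-16\vol{P}\bigr]$, and $B=C=0$ forcing $\vol{P}=16$, $\abs{P\cap\Z^4}=81$). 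Working from the coefficients of $G(\beta)$ rather than the $17(\cdots)$ form of the discriminant is a sensible streamlining, not a different method.
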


\begin{proof}
Simply make the substitutions $2+2\delta_1+\delta_2=4!\vol{P}$ and $\delta_1=\abs{P\cap\Z^4}-5$ in Theorem~\ref{thm:dim_4_conditions}.
\end{proof}

\noindent
The second and third inequalities in Corollary~\ref{cor:dim_4_conditions}\eqref{item:dim_4_conditions} also appear in the work of Bey--Henk--Wills~\cite[Proposition~1.9(ii)]{BHW07}, however they overlook the first inequality. As noted above, this inequality is necessary in order to specify which of the regions we are interested in, although the precise form this inequality takes is a matter of choice. In Theorem~\ref{thm:dim_4_real} we will show that flipping this inequality corresponds to selecting the regions containing the real reflexive polytopes.

\subsection{Four-dimensional real reflexive polytopes}\label{sec:dim_4_real_reflexive}
\begin{thm}\label{thm:dim_4_real}
Let $P$ be a $4$-dimensional reflexive polytope with $\delta$-vector $(1,\delta_1,\delta_2,\delta_1,1)$. Then $P$ is a real polytope (i.e.~we are in case~\eqref{item:dim4_case3} above) if and only if either
\begin{enumerate}
\item\label{item:dim_4_real_1}
$\delta_1=76$ and $\delta_2=230$, or
\item\label{item:dim_4_real_2}
$5\delta_2>14\delta_1+86$, $10\delta_1\leq 3\delta_2 + 70$, and $17(\delta_1+4\delta_2-15)^2\leq(17\delta_1+49)^2+(17\delta_2-94)^2$.
\end{enumerate}
\end{thm}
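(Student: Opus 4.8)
The plan is to run the proof of Theorem~\ref{thm:dim_4_conditions} almost verbatim, replacing the substitution $z=-1/2+\beta i$ (which detects roots on the line $\Re{z}=-1/2$) by the real substitution $z=-1/2+\alpha$ with $\alpha\in\R$ (which detects real roots). Starting from the Ehrhart polynomial~\eqref{eq:dim_4_Ehrhart} and using Macdonald reciprocity $L_P(-1/2-\alpha)=(-1)^4L_P(-1/2+\alpha)=L_P(-1/2+\alpha)$, the function $H(\alpha):=\tfrac{4!}{2}L_P(-1/2+\alpha)$ is even in $\alpha$ and hence a quadratic in $v:=\alpha^2$. Since a root $z$ of $L_P$ is real exactly when the corresponding value $v=(z+1/2)^2$ is a nonnegative real number, $P$ lies in case~\eqref{item:dim4_case3} precisely when both roots of this quadratic in $v$ are nonnegative reals.

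The key observation is that $H$ is obtained from the function $G$ of Theorem~\ref{thm:dim_4_conditions} by the formal substitution $\beta^2\mapsto-\alpha^2$, because $H(\alpha)=G(-i\alpha)$. Writing $G=Au^2+Bu+C$ with $u=\beta^2$ and $A=1+\delta_1+\delta_2/2>0$, we therefore have $H=Av^2-Bv+C$; in particular the discriminant $\Disc=B^2-4AC$ is unchanged, so the parabola and its tangent from Theorem~\ref{thm:dim_4_conditions} reappear untouched and only the sign of the linear coefficient flips. Applying Lemma~\ref{lem:quadratic} to $H$ (with leading coefficient $A>0$) then shows that both $v$-roots are nonnegative reals if and only if either $-B=C=0$, or $-B<0$, $C\geq 0$, and $\Disc\geq 0$. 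The conditions $C\geq 0$ and $\Disc\geq 0$ are literally those of Theorem~\ref{thm:dim_4_conditions} and reproduce the second and third inequalities of case~\eqref{item:dim_4_real_2}, while $-B<0$, i.e.\ $B>0$, is exactly the reversed first inequality $5\delta_2>14\delta_1+86$.

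It remains to match the degenerate alternative $-B=C=0$ with case~\eqref{item:dim_4_real_1}: the equations $B=0$ and $C=0$ are the two lines $5\delta_2=14\delta_1+86$ and $10\delta_1=3\delta_2+70$, whose unique common solution is $(\delta_1,\delta_2)=(76,230)$, at which $H(v)=Av^2$ has the double root $v=0$, i.e.\ $z=-1/2$ with multiplicity four (the $4$-cube, with $L_P(m)=(2m+1)^4$), simultaneously real and a CL-polytope. I expect the only point requiring genuine care to be the sharpness bookkeeping: one should verify that demanding both $v$-roots nonnegative faithfully isolates case~\eqref{item:dim4_case3} from the three remaining possibilities, namely the generic case~\eqref{item:dim4_case1} (where the $v$-roots are a non-real conjugate pair $a^2-b^2\pm 2abi$), the genuine CL-case~\eqref{item:dim4_case2} (where both $v$-roots $-b_1^2,-b_2^2$ are negative), and the mixed case~\eqref{item:dim4_case4} (where the $v$-roots $a^2,-b^2$ have opposite sign). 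Checking these sign patterns confirms that the stated conditions are not merely sufficient but characterise the real case exactly.
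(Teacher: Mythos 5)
Your proposal is correct and takes essentially the same route as the paper: the paper's proof likewise substitutes $z=-1/2+\alpha$, $\alpha\in\R$, into~\eqref{eq:dim_4_Ehrhart} to obtain the even quartic~\eqref{eq:dim4_alpha}, regards it as a quadratic in $\alpha^2$, and invokes Lemma~\ref{lem:quadratic}. Your additional bookkeeping---the sign flip of the linear coefficient relative to $G$, the identification of the degenerate case $B=C=0$ with $(\delta_1,\delta_2)=(76,230)$, and the check that nonnegativity of both $v$-roots isolates case~\eqref{item:dim4_case3}---simply makes explicit what the paper's terser proof leaves implicit.
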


\begin{proof}
Let $z=-1/2+\alpha$, $\alpha\in\R$, be a root of $L_P$. Substituting this into~\eqref{eq:dim_4_Ehrhart} gives
\begin{equation}\label{eq:dim4_alpha}
\frac{\alpha^4}{12}\left(1+\delta_1+\frac{1}{2}\delta_2\right) + \frac{\alpha^2}{24}\left(43+7\delta_1-\frac{5}{2}\delta_2\right)+\frac{1}{64}\left(35-5\delta_1+\frac{3}{2}\delta_2\right) = 0.
\end{equation}
Regarding this as a quadratic in $\alpha^2$, the result follows from Lemma~\ref{lem:quadratic}.
\end{proof}

\begin{example}
Let $Q=\sconv{(-1,2),(-1,-1),(2,-1)}$ with $L_Q(m)=1/2(3m+1)(3m+2)$. Consider the direct product $P=Q\times Q$. This is a $4$-dimensional polytope with Ehrhart polynomial
\[
L_P(m)=L_Q(m)^2=\frac{1}{4}(3m+1)^2(3m+2)^2.
\]
Hence $P$ has $\delta$-vector $(1,95,294,95,1)$ and this gives equality in the third expression in Theorem~\ref{thm:dim_4_real}\eqref{item:dim_4_real_2}, i.e.\ $(95,294)$ is an integer point on the parabola defined by the discriminant $\Delta$.
\end{example}

\begin{example}
We investigate which $\delta$-vectors lie on the parabola $17(\delta_1+4\delta_2-15)^2=(17\delta_1+49)^2+(17\delta_2-94)^2$. Since we already know an integer point, $\delta_1=76$, $\delta_2=230$, by the ``slope method'' we can easily parameterise the rational points on the curve:
\[
\delta_1=\frac{4(19\gamma^2-98\gamma+124)}{(\gamma-4)^2},\qquad\delta_2=\frac{2(223\gamma^2-1280\gamma+1840)}{(\gamma-4)^2},\qquad\text{ where }\gamma\in\Q.
\]
Interpreting the first of these equations as a quadratic in $\gamma$, and restricting to $\delta_1\in\Z$, we see that $\delta_1+5$ is a square; this is equivalent to saying that $\abs{P\cap\Z^4}$ is a square. Setting $\delta_1+5=\abs{P\cap\Z^4}=N^2$ for some $N\in\Z_{\geq 1}$ and solving for $\gamma$, we obtain:
\[
\gamma=4-\frac{12}{9\pm N}.
\]
The second equation gives that $\delta_2-5$ is a square, and setting $\delta_2-5=M^2$ for some $M\in\Z_{\geq 1}$ we obtain:
\[
\gamma=4-\frac{24}{21\pm M}.
\]
Equating these two expressions for $\gamma$, and remembering that $\abs{P\cap\Z^4}\geq6$, we find that
\[
(\delta_1,\delta_2)=(N^2-5,(2N\pm3)^2+5),\qquad\text{ for }N\geq 3.
\]
By consulting the Kreuzer--Skarke classification~\cite{KS00} we see that the cases with $\delta_2=(2N+3)^2+5$ never occur (this corresponds to the upper branch of the parabola); the reflexive polytopes lie on the bottom branch with $\delta_2=(2N-3)^2+5$, for each $N\in\{3,\ldots,13\}$. When $N>13$ there are no matching $\delta$-vectors. The occurring $\delta$-vectors are recorded in Table~\ref{tab:parabola_deltas}.
\end{example}

\begin{table}[htdp]
\caption{The $\delta$-vectors of the $4$-dimensional reflexive polytopes lying on the parabola $17(\delta_1+4\delta_2-15)^2=(17\delta_1+49)^2+(17\delta_2-94)^2$. Those with $3\leq N\leq 9$ correspond to CL-polytopes; those with $9\leq N\leq 13$ correspond to real polytopes.}\label{tab:parabola_deltas}
\centering
\begin{tabular}{rccccccccccc}
\toprule
$N$&$3$&$4$&$5$&$6$&$7$&$8$&$9$&$10$&$11$&$12$&$13$\\
\midrule
$\delta_1$&$4$&$11$&$20$&$31$&$44$&$59$&$76$&$95$&$116$&$139$&$164$\\
$\delta_2$&$14$&$30$&$54$&$86$&$126$&$174$&$230$&$294$&$366$&$446$&$534$\\
\bottomrule
\end{tabular}
\end{table}

\begin{cor}\label{cor:dim_4_conditions_real}
Let $P$ be a $4$-dimensional reflexive polytope. Then $P$ is a real polytope (i.e.~we are in case~\eqref{item:dim4_case3} above) if and only if either
\begin{enumerate}
\item
$\vol{P}=16$ and $\abs{P\cap\Z^4}=81$, or
\item\label{item:dim_4_conditions}
$5\vol{P}>\abs{P\cap\Z^4}-1$, $2\abs{P\cap\Z^4}\le 9(\vol{P} + 2)$, and $\left(\abs{P\cap\Z^4}-1-4\vol{P}\right)^2\ge 16\vol{P}$.
\end{enumerate}
\end{cor}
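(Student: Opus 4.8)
The plan is to mirror the proof of Corollary~\ref{cor:dim_4_conditions} almost verbatim, but starting from Theorem~\ref{thm:dim_4_real} in place of Theorem~\ref{thm:dim_4_conditions}. Concretely, I would substitute the two identities $2+2\delta_1+\delta_2=4!\vol{P}$ and $\delta_1=\abs{P\cap\Z^4}-5$ — valid for any $4$-dimensional reflexive polytope by Ehrhart's formulae together with palindromy of the $\delta$-vector — into the three conditions defining case~\eqref{item:dim_4_real_2} of Theorem~\ref{thm:dim_4_real}. Inverting these gives $\delta_1=\abs{P\cap\Z^4}-5$ and $\delta_2=24\vol{P}-2\abs{P\cap\Z^4}+8$. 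The exceptional case $\delta_1=76,\ \delta_2=230$ transforms immediately into $\vol{P}=16,\ \abs{P\cap\Z^4}=81$, matching case~(i).

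The observation that makes this almost free is that Theorem~\ref{thm:dim_4_real} differs from Theorem~\ref{thm:dim_4_conditions} only in the direction of the first inequality. Under the substitution, $5\delta_2>14\delta_1+86$ becomes $5\vol{P}>\abs{P\cap\Z^4}-1$, which is precisely the first inequality of Corollary~\ref{cor:dim_4_conditions} reversed; the second inequality $10\delta_1\le 3\delta_2+70$ rearranges to $2\abs{P\cap\Z^4}\le 9(\vol{P}+2)$, unchanged; and the third, discriminant inequality is literally identical in the two theorems. The reason the discriminant is shared is structural: the quadratic in $\alpha^2$ appearing in~\eqref{eq:dim4_alpha} is obtained from the quadratic in $\beta^2$ of Theorem~\ref{thm:dim_4_conditions} by the sign change $\beta^2\mapsto-\alpha^2$, and replacing the indeterminate of a quadratic by its negative leaves the discriminant invariant. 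Hence no fresh computation of $\Disc$ is required beyond what the CL case already supplies.

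The only genuinely computational ingredient, shared with Corollary~\ref{cor:dim_4_conditions} and the step I expect to be the main (if routine) obstacle, is verifying that $\Disc\ge 0$ translates to $(\abs{P\cap\Z^4}-1-4\vol{P})^2\ge 16\vol{P}$. I would do this by expanding the discriminant and noting that its degree-two part collapses as $16\delta_1^2-8\delta_1\delta_2+\delta_2^2=(4\delta_1-\delta_2)^2$, so that after substituting the expressions for $\delta_1,\delta_2$ the whole quantity becomes a positive constant multiple of $(\abs{P\cap\Z^4}-1-4\vol{P})^2-16\vol{P}$. Collecting these three translated inequalities together with the transformed exceptional case then yields exactly the statement of the corollary.
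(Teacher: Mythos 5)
Your proposal is correct and is exactly the paper's (implicit) argument: the paper proves Corollary~\ref{cor:dim_4_conditions_real} just as it proves Corollary~\ref{cor:dim_4_conditions}, namely by substituting $2+2\delta_1+\delta_2=4!\vol{P}$ and $\delta_1=\abs{P\cap\Z^4}-5$ into Theorem~\ref{thm:dim_4_real}. Your algebra checks out, including the observation that the discriminant inequality is shared with the CL case (the substitution $\beta^2\mapsto-\alpha^2$ preserves the discriminant, which equals $36\left(\left(\abs{P\cap\Z^4}-1-4\vol{P}\right)^2-16\vol{P}\right)$ after the change of variables).
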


\begin{example}\label{ex:essential}
Consider the simplex
\[
P=\sconv{(4,-1,-1,-1),(-1,4,-1,-1),(-1,-1,4,-1),(-1,-1,-1,4),(-1,-1,-1,-1)}
\]
corresponding to the anticanonical polytope of $X=\Proj^4$. This has
\begin{align*}
4!\vol{P}&=(-K_X)^4=5^4,\\
\abs{P\cap\Z^4}&=h^0(X,-K_X)={{2\cdot 5-1}\choose{5}}=126,
\end{align*}
satisfying Corollary~\ref{cor:dim_4_conditions_real}\eqref{item:dim_4_conditions}. The Ehrhart polynomial is
\[
L_P(m)=\frac{1}{4!}\prod_{k=1}^4(5m+k).
\]
\end{example}

\begin{prop}\label{dim4_real_volume}
Let $P$ be a $4$-dimensional real reflexive polytope. Then $\vol{P}\geq 3$.
\end{prop}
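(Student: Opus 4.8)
The plan is to avoid any attempt to minimise $\vol{P}$ directly over the region cut out by the real conditions, and instead to combine the two most useful inequalities of Corollary~\ref{cor:dim_4_conditions_real} with the trivial lattice-point bound for reflexive polytopes, finishing with an integrality argument. Write $N:=\abs{P\cap\Z^4}$ and $V:=\vol{P}$. The exceptional case $V=16$, $N=81$ is immediate, so I assume the second alternative of Corollary~\ref{cor:dim_4_conditions_real}, from which I retain only
\[
5V>N-1\qquad\text{and}\qquad(N-1-4V)^2\geq 16V.
\]
First I would record that $N\geq 6$: a $4$-dimensional polytope has at least five vertices, and since $P$ is reflexive the origin lies in $\intr{P}$ and so is not among them, giving a sixth lattice point (equivalently $\delta_1\geq 1$).

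Next I would read the second inequality as $\abs{N-1-4V}\geq 4\sqrt{V}$ and split into two branches. If $N-1\geq 4V+4\sqrt{V}$, then $5V>N-1$ yields $5V>4V+4\sqrt{V}$, hence $\sqrt{V}>4$ and $V>16$. If instead $N-1\leq 4V-4\sqrt{V}$, then $N\geq 6$ gives $4V-4\sqrt{V}\geq 5$; regarding this as a quadratic in $\sqrt{V}$ and taking the larger root shows $\sqrt{V}\geq(1+\sqrt{6})/2$, so that $V\geq(7+2\sqrt{6})/4=2.974\ldots$. The two branches are exhaustive, so in every case $V\geq 2.974\ldots$.

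The final step is the integrality argument, which is what upgrades $2.974\ldots$ to the clean bound. Since $24V=2+2\delta_1+\delta_2$ is a positive integer, $V$ is a multiple of $1/24$; as $24\cdot 2.974\ldots>71$, the inequality $V\geq 2.974\ldots$ forces $24V\geq 72$, i.e.\ $V\geq 3$. I expect the genuine obstacle to be conceptual rather than computational: the three real inequalities alone do \emph{not} force $V\geq 3$ (formally they admit $\delta$-vectors such as $(1,0,61,0,1)$ with $V=63/24<3$, sitting on the ``upper branch'' where $\delta_1$ is small and $\delta_2$ large), so the crux is recognising that the bound is rescued only by the reflexivity input $N\geq 6$ together with integrality of $24V$. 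This also shows the bound is sharp, attained exactly when $24V=72$.
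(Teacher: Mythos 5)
Your proof is correct and follows essentially the same route as the paper's: both split according to the two branches of the discriminant inequality, dispose of the branch $N-1\geq 4V+4\sqrt{V}$ using the strict inequality $5V>N-1$, and settle the other branch by combining $\delta_1\geq 1$ with integrality (the paper rounds $\delta_2\geq 38+12\sqrt{6}$ up to $68$, while you round $24V\geq 42+12\sqrt{6}$ up to $72$ --- the same step in different coordinates). Working with $(N,V)$ rather than $(\delta_1,\delta_2)$ makes your first branch cleaner than the paper's (which also invokes $\delta_1\leq\delta_2$), and your example $(1,0,61,0,1)$ correctly isolates why the input $\delta_1\geq 1$ is indispensable. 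One correction: your closing claim that the bound is \emph{sharp} is unjustified and, at the level of polytopes, false --- equality $24V=72$ under your constraints forces the $\delta$-vector $(1,1,68,1,1)$, and the paper notes immediately after this proposition that no reflexive polytope with that $\delta$-vector exists (by the method of \S\ref{subsec:find_P_from_delta}), so the value $\vol{P}=3$ is not attained; your argument shows only that the numerical constraints alone cannot yield anything stronger than $\vol{P}\geq 3$.
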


\begin{proof}
By Theorem~\ref{thm:dim_4_real}, $P$ is real if and only if $\delta_1$, $\delta_2$ satisfy either conditions~\eqref{item:dim_4_real_1} or~\eqref{item:dim_4_real_2}. In case~\eqref{item:dim_4_real_1}, since $\vol{P}=(2+2\delta_1+\delta_2)/24$, there is nothing to prove. In case~\eqref{item:dim_4_real_2}, by the inequality $17(\delta_1+4\delta_2-15)^2\leq(17\delta_1+49)^2+(17\delta_2-94)^2$ we have $16\delta_1^2-8(\delta_2-16)\delta_1+\delta_2^2-68\delta_2+436 \geq 0$, that is:
\begin{align}\label{eq:condition}
\delta_1\geq\frac{(\delta_2-16)+6\sqrt{\delta_2-5}}{4},\qquad\text{ or }\qquad 1\leq\delta_1\leq \frac{(\delta_2-16)-6\sqrt{\delta_2-5}}{4}.
\end{align}
Moreover, by the inequality $5\delta_2>14\delta_1+86$, we have that $\delta_1<(5\delta_2-86)/14$.

When the first condition in~\eqref{eq:condition} is satisfied, and recalling that $\delta_1 \leq \delta_2$, we see that
\[
\frac{(\delta_2-16)+6\sqrt{\delta_2-5}}{4}\leq\delta_1<\frac{5\delta_2-86}{14}.
\]
Hence we obtain $\delta_2>230$, so we are done. When the second condition in~\eqref{eq:condition} is satisfied, since $(\delta_2-16-6\sqrt{\delta_2-5})/4 \geq 1$, we obtain $\delta_2\geq\lceil 38+12\sqrt{6}\rceil=68$. Hence $(2+2\delta_1+\delta_2)/24 \geq (2+2+68)/24 = 3$, as required.
\end{proof}

\noindent
By the proof of Proposition~\ref{dim4_real_volume}, $P$ is a $4$-dimensional real reflexive polytope with $\vol{P}=3$ if and only if its $\delta$-vector equals $(1,1,68,1,1)$. However, by the method described in~\S\ref{subsec:find_P_from_delta} below, we can show that no such reflexive polytope exists.

The $4$-dimensional reflexive polytopes were classified by Kreuzer--Skarke~\cite{KS00}: there are $473\,800\,776$ cases. Corollary~\ref{cor:dim_4_conditions_real} makes extracting the real reflexive polytopes a simple matter, and we can recover their $\delta$-vectors. We find that the region to the left of the parabola (and closest to the $\delta_2$-axis) in Figure~\ref{fig:3d_parabola_and_tangent} is empty: all the $\delta$-vectors lie in the narrow region between the parabola and the tangent. A plot of all of the $\delta$-vectors suggests very strongly that there is an additional inequality awaiting discovery. Furthermore:

\begin{prop}\label{prop:4real_roots}
Let $P$ be a $4$-dimensional real reflexive polytope. Then the roots of $L_P$ satisfy~\eqref{eq:CS}.
\end{prop}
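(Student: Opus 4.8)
The plan is to exploit equation~\eqref{eq:dim4_alpha}, which expresses $L_P(-1/2+\alpha)$ as a polynomial in $\alpha$. Since $P$ is real, its four roots are $-1/2\pm a_1$ and $-1/2\pm a_2$ with $a_1,a_2\in\R$, and each such root lies in the open interval $(-1,0)$ -- that is, $P$ satisfies~\eqref{eq:CS} -- precisely when $a_1^2<1/4$ and $a_2^2<1/4$. Writing $f(x)=Ax^2+Bx+C$ for the quadratic in $x=\alpha^2$ on the left-hand side of~\eqref{eq:dim4_alpha}, with $A=\vol P>0$, the numbers $a_1^2,a_2^2$ are exactly the roots of $f$, and they are real and non-negative because $P$ is real (this is Theorem~\ref{thm:dim_4_real} via Lemma~\ref{lem:quadratic}). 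So the proposition reduces to showing that both roots of $f$ lie strictly below $1/4$.

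First I would evaluate $f$ at $x=1/4$. As~\eqref{eq:dim4_alpha} is $L_P(-1/2+\alpha)$, we have $f(1/4)=L_P(0)=1$, so
\[
A\left(\tfrac14-a_1^2\right)\left(\tfrac14-a_2^2\right)=1>0.
\]
Hence the two roots of $f$ are either both below $1/4$ or both above it. To decide between these, I would compare the axis $-B/(2A)=\tfrac12(a_1^2+a_2^2)$ with $1/4$: both roots fall below $1/4$ exactly when $a_1^2+a_2^2<1/2$, i.e.\ when $A+2B>0$. A direct computation gives $A+2B=\tfrac16(22+4\delta_1-\delta_2)$, so the entire proposition comes down to the single inequality $\delta_2<4\delta_1+22$.

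The hard part is this last inequality, which does \emph{not} follow from the defining conditions of Theorem~\ref{thm:dim_4_real} alone: the vector $(1,1,100,1,1)$ satisfies $B<0$, $C\geq0$ and $\Delta\geq0$, meeting every hypothesis of case~\eqref{item:dim_4_real_2}, yet has $\delta_2=100>26=4\delta_1+22$ and roots near $0.72,0.16,-1.16,-1.72$ that violate~\eqref{eq:CS}. Geometrically, substituting $\delta_2=4\delta_1+22$ into the discriminant yields $\Delta=-144(\delta_1+4)<0$, so the line $\delta_2=4\delta_1+22$ lies strictly inside the parabola $\Delta=0$ and separates its two branches; on the branch of larger $\delta_1$ one has $4\delta_1\geq\delta_2-16+6\sqrt{\delta_2-5}>\delta_2-22$, giving $\delta_2<4\delta_1+22$, whereas the other branch violates it. The inequality is therefore a statement about which branch is realised, and I would close the argument using the classification-based observation recorded just before this proposition: for the real reflexive $4$-polytopes extracted via Corollary~\ref{cor:dim_4_conditions_real} from the Kreuzer--Skarke list, every occurring $\delta$-vector lies in the region between the parabola and the tangent $10\delta_1=3\delta_2+70$, i.e.\ on the large-$\delta_1$ branch. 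This forces $\delta_2<4\delta_1+22$, and hence~\eqref{eq:CS}. It would be preferable to replace this appeal to the classification by an intrinsic $\delta$-vector inequality for reflexive polytopes -- the distribution of occurring vectors strongly suggests one exists -- but the spurious solution above shows that the purely root-theoretic conditions are genuinely insufficient.
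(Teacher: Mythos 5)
Your proposal is correct, but note what ``the paper's proof'' is here: there isn't one. The paper records Proposition~\ref{prop:4real_roots} as an empirical consequence of the Kreuzer--Skarke classification~\cite{KS00} and states immediately afterwards that it lacks a theoretical explanation; so any valid argument must, at present, lean on the classification somewhere, and yours does so in the most economical way. Your reduction is sound and checks out in detail: equation~\eqref{eq:dim4_alpha} is exactly $L_P(-1/2+\alpha)=0$, so $f(1/4)=L_P(0)=1>0$ forces the two non-negative roots $a_1^2,a_2^2$ onto the same side of $1/4$; the axis comparison gives $A+2B=\tfrac{1}{6}(22+4\delta_1-\delta_2)$, so~\eqref{eq:CS} is equivalent to the single inequality $\delta_2<4\delta_1+22$; substituting this line into the discriminant does yield $-144(\delta_1+4)<0$, so the line runs inside the parabola, and the inequality holds on the large-$\delta_1$ branch (because $6\sqrt{\delta_2-5}>-6$) while failing on the small-$\delta_1$ branch (where $\delta_2\geq 68$). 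Your vector $(1,1,100,1,1)$ indeed satisfies all three conditions of Theorem~\ref{thm:dim_4_real}\eqref{item:dim_4_real_2} while violating~\eqref{eq:CS}, which correctly shows that no proof from Theorem~\ref{thm:dim_4_real} alone is possible --- in agreement with the paper's admission. Compared with the paper, which (implicitly) verifies~\eqref{eq:CS} polytope-by-polytope over the classification, your route isolates the exact empirical input required --- the emptiness of the small-$\delta_1$ branch, i.e.\ the paper's observation that the region to the left of the parabola contains no $\delta$-vectors --- and derives~\eqref{eq:CS} from it by a short theoretical argument. This buys two things the paper does not make explicit: a concrete candidate, $\delta_2<4\delta_1+22$, for the ``missing inequality awaiting discovery'' that the paper conjectures, and a proof that the root-theoretic characterisation of real reflexive $4$-polytopes is genuinely insufficient to establish the proposition.
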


\noindent
As a consequence, Proposition~\ref{prop:real_reflexive_CS_volume} tells us that $\vol{P}\geq 2^4$. Unfortunately we do not have a theoretical explanation for Proposition~\ref{prop:4real_roots}.

\begin{example}\label{eg:no_connection}
There is no obvious relationship between the $\delta$-vector of $P$ and that of the dual polytope~$\dual{P}$. For example, $P=\sconv{(1,0,0,0),(0,1,0,0),(0,0,1,0),(0,0,0,1),(-2,-2,-3,-4)}$, the polytope associated with weighted projective space $X=\Proj(1,2,2,3,4)$, has $4!\vol{P}=1+2+2+3+4=12$ and $\abs{P\cap\Z^4}=7$ (the point $(-1,-1,-1,-2)$ lies on an edge of $P$) and so is a CL-polytope by Corollary~\ref{cor:dim_4_conditions}. Its dual polytope $\dual{P}$ has
\begin{align*}
4!\vol{\dual{P}}&=(-K_X)^4=\frac{12^4}{1\cdot 2^2\cdot 3\cdot 4}=432\\
\abs{\dual{P}\cap\Z^4}&=h^0(X,-K_X)\\
&=\text{coefficient of $t^{12}$ in the Taylor expansion of }\frac{1}{(1-t)(1-t^2)^2(1-t^3)(1-t^4)}\\
&=89,
\end{align*}
and hence is neither real nor a CL-polytope.
\end{example}

\begin{figure}[htbp]
\centering
\includegraphics[scale=0.8]{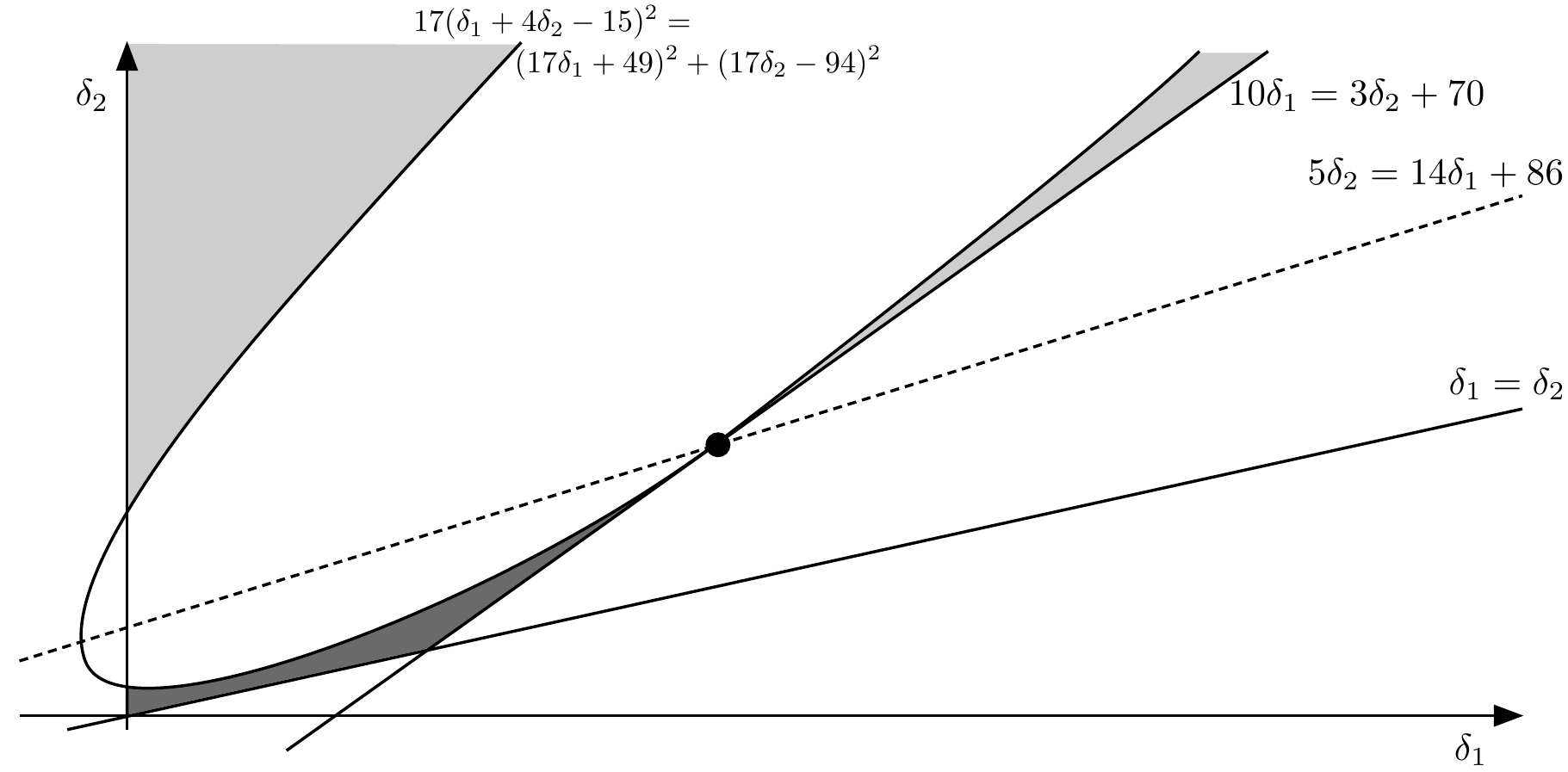}
\caption{An illustration of the regions containing the pairs $(\delta_1,\delta_2)$ for the four-dimensional CL-polytopes (the darker, bounded region closest to the origin) and the real reflexive polytopes (the lighter, unbounded regions on either side of the parabola). The dot near the centre represents the value $(76,230)$, corresponding to the Ehrhart $\delta$-vector of the $4$-cube, and gives polytopes which are simultaneously both CL-polytopes and real. The line $10\delta_1=3\delta_2+70$ is tangent to the parabola at this point.}\label{fig:3d_parabola_and_tangent}
\end{figure}

\subsection{Remaining cases}
\begin{prop}\label{dim4_mixed_volume}
Let $P$ be a $4$-dimensional reflexive polytope, and suppose that there are two roots of $L_P$ which are real, and that there are two roots with real part $-1/2$ (i.e.~we are in case~\eqref{item:dim4_case4} above). Then $\vol{P}\geq 4/3$.
\end{prop}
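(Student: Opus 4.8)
The plan is to convert the case~\eqref{item:dim4_case4} hypothesis into a linear inequality in the $\delta$-vector coordinates $(\delta_1,\delta_2)$ and then invoke Hibi's Lower Bound Theorem. Since $4!\vol{P}=\delta_0+\cdots+\delta_4=2+2\delta_1+\delta_2$, the claim $\vol{P}\geq 4/3$ is equivalent to $2\delta_1+\delta_2\geq 30$, so it suffices to bound $\delta_1$ and $\delta_2$ from below. The whole argument will turn on evaluating $L_P$ at the centre of symmetry $z=-1/2$.

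First I would record the sign of $L_P(-1/2)$ in each configuration. Writing the four roots as $-1/2+w_k$ and using $L_P(z)=\vol{P}\prod_k\bigl(z-(-1/2+w_k)\bigr)$, one gets $L_P(-1/2)=\vol{P}\prod_k(-w_k)=\vol{P}\prod_k w_k$. In case~\eqref{item:dim4_case4} the shifted roots are $\{a,-a,bi,-bi\}$ with $a,b\neq 0$, so $\prod_k w_k=(a)(-a)(bi)(-bi)=-a^2b^2<0$; by contrast the products $b_1^2b_2^2$, $a_1^2a_2^2$, and $(a^2+b^2)^2$ arising in cases~\eqref{item:dim4_case2}, \eqref{item:dim4_case3}, \eqref{item:dim4_case1} are all non-negative. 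Hence $L_P(-1/2)<0$ holds precisely in the mixed case, and this is the structural fact I will use.

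Next I would make $L_P(-1/2)$ explicit. Substituting $z=-1/2$ into~\eqref{eq:dim_4_Ehrhart} (equivalently, reading off the constant term $\tfrac{1}{16}(105-15\delta_1+\tfrac{9}{2}\delta_2)$ of the quartic $G$ in the proof of Theorem~\ref{thm:dim_4_conditions}, where $G(0)=12\,L_P(-1/2)$) gives $L_P(-1/2)=\tfrac{1}{192}\bigl(105-15\delta_1+\tfrac{9}{2}\delta_2\bigr)$. Thus the case~\eqref{item:dim4_case4} hypothesis is equivalent to $105-15\delta_1+\tfrac{9}{2}\delta_2<0$, i.e.\ $10\delta_1>3\delta_2+70$; this is exactly the complement of the tangent-line condition $10\delta_1\leq 3\delta_2+70$ featuring in Theorems~\ref{thm:dim_4_conditions} and~\ref{thm:dim_4_real}, which is a useful consistency check.

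Finally, since $P$ is reflexive we have $\abs{\intr{P}\cap\Z^4}=1>0$, so Hibi's Lower Bound Theorem applies and gives $\delta_1\leq\delta_2$. Combining this with $10\delta_1>3\delta_2+70$ yields $10\delta_1>3\delta_1+70$, so $7\delta_1>70$; as $\delta_1\in\Z$ this forces $\delta_1\geq 11$, whence $\delta_2\geq\delta_1\geq 11$. Therefore $2\delta_1+\delta_2\geq 33$ and $\vol{P}\geq 35/24>4/3$, slightly stronger than asserted. I expect the main obstacle to be the sign bookkeeping of $\prod_k w_k$: the entire proof rests on verifying that $L_P(-1/2)<0$ isolates case~\eqref{item:dim4_case4} among the four root configurations (and, in particular, that $a,b\neq 0$ there), so this dichotomy must be checked with care; the explicit evaluation of $L_P(-1/2)$, the integrality step $7\delta_1>70\Rightarrow\delta_1\geq 11$, and the application of Hibi's theorem are then routine.
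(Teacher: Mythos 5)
Your proposal is correct and takes essentially the same route as the paper: the paper likewise characterises case~\eqref{item:dim4_case4} by the sign of the constant term of the shifted polynomial~\eqref{eq:dim4_alpha} (which is exactly $L_P(-1/2)$, giving the condition $70-10\delta_1+3\delta_2\leq 0$), then combines this with $\delta_1\leq\delta_2$ to get $\delta_1\geq 10$ and hence $\vol{P}=(2+2\delta_1+\delta_2)/24\geq(2+3\delta_1)/24\geq 4/3$. The only difference is the boundary convention you yourself flag: the paper uses the non-strict inequality $\leq 0$, which covers the degenerate subcases $a=0$ or $b=0$ (realised, e.g., by the $4$-cube, where $L_P(-1/2)=0$) and yields the constant $4/3$ exactly, whereas your strict inequality, valid under the genuinely mixed reading $a,b\neq 0$, gives the marginally stronger bound $35/24$.
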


\begin{proof}
By considering the left-hand side of~\eqref{eq:dim4_alpha} we see that we are in case~\eqref{item:dim4_case4} if and only if $70-10\delta_1+3\delta_2\leq 0$. Since $\delta_1\leq \delta_2$, we have that $7(10-\delta_1)\leq 70-10\delta_1+3\delta_2\leq 0$. Thus $\delta_1\geq 10$. Therefore $\vol{P}=(2+2\delta_1+\delta_2)/24\geq (2+3\delta_1)/24\geq 4/3$, as required.
\end{proof}

\begin{prop}\label{prop:canonical_strip_4}
Let $P$ be a $4$-dimensional reflexive polytope, and suppose that there exists a root $-1/2+a+bi\in\C$ of $L_P$, where $a\neq 0$ and $b\neq 0$ (i.e.\ we are in case~\eqref{item:dim4_case1} above). Then $\abs{a}< 3/2$ and the roots of $L_P$ satisfy~\eqref{eq:HS}.
\end{prop}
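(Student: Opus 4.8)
The plan is to translate the statement about real parts into the symmetric variable $w=z+1/2$ and then reduce everything to a single polynomial inequality in $\delta_1,\delta_2$. First I would set $H(w):=L_P(-1/2+w)$. Since $d=4$, Macdonald reciprocity gives $L_P(-m-1)=L_P(m)$, so $H(-w)=H(w)$ and $H$ is an even quartic, say $H(w)=c_4w^4+c_2w^2+c_0$. Its coefficients can be read off directly from~\eqref{eq:dim4_alpha} (which is exactly $H(\alpha)=0$ for real $\alpha$):
\[
c_4=\tfrac{1}{12}\left(1+\delta_1+\tfrac12\delta_2\right),\qquad c_2=\tfrac{1}{24}\left(43+7\delta_1-\tfrac52\delta_2\right),\qquad c_0=\tfrac{1}{64}\left(35-5\delta_1+\tfrac32\delta_2\right).
\]
In case~\eqref{item:dim4_case1} the roots of $H$ in the variable $w$ are $\pm a\pm bi$, so the two roots $X_1,X_2$ of the quadratic $c_4X^2+c_2X+c_0$ in $X=w^2$ form a non-real complex-conjugate pair. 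In particular $X_1X_2=c_0/c_4=\abs{X_1}^2>0$, so $c_0>0$, while $c_4=\vol{P}>0$.

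Next I would express $a^2$ through these coefficients. Writing $X_1=(a+bi)^2=a^2-b^2+2abi$, the elementary symmetric relations $X_1+X_2=-c_2/c_4$ and $X_1X_2=c_0/c_4$ yield $a^2-b^2=\Re{X_1}=-c_2/(2c_4)$ and $a^2+b^2=\abs{X_1}=\sqrt{c_0/c_4}$. Adding these two equations gives the key identity
\[
a^2=\frac{1}{2}\left(\sqrt{\frac{c_0}{c_4}}-\frac{c_2}{2c_4}\right).
\]

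It then remains to prove $a^2<9/4$. I would rewrite this as $\sqrt{c_0/c_4}<\tfrac92+\tfrac{c_2}{2c_4}=(9c_4+c_2)/(2c_4)$. A short computation gives $9c_4+c_2=(122+50\delta_1+13\delta_2)/48>0$, so the right-hand side is positive and squaring is valid and reversible; since $c_4>0$ this is equivalent to $4c_0c_4<(9c_4+c_2)^2$. Clearing denominators, this becomes the polynomial inequality
\[
3\left(70-10\delta_1+3\delta_2\right)\left(2+2\delta_1+\delta_2\right)<\left(122+50\delta_1+13\delta_2\right)^2.
\]

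The crux is to verify this last inequality, and I expect the only real work to be the bookkeeping in its expansion. Subtracting the left-hand side from the right-hand side, the difference equals
\[
2560\,\delta_1^2+1312\,\delta_1\delta_2+160\,\delta_2^2+11840\,\delta_1+2944\,\delta_2+14464,
\]
a quadratic form in $(\delta_1,\delta_2)$ all of whose coefficients are nonnegative and whose constant term is strictly positive; it is therefore strictly positive for all $\delta_1,\delta_2\geq 0$. Hence the inequality holds for every reflexive $P$, and in particular $\abs{a}<3/2$ in case~\eqref{item:dim4_case1}. Finally, the half-strip condition~\eqref{eq:HS} for $d=4$ reads $-2\leq\Re{z}\leq 1$, and since the real parts of the roots are $-1/2\pm a\in(-2,1)$, it follows immediately. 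The pleasant feature of this approach is that the expanded form has uniformly nonnegative coefficients, so $\abs{a}<3/2$ drops out with no appeal to the defining constraints of case~\eqref{item:dim4_case1} (namely the discriminant condition $\Disc<0$).
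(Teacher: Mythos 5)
Your proof is correct, and it diverges from the paper's argument precisely at its hardest step, replacing it with something genuinely simpler. Both proofs begin identically: substitute $z=-1/2+w$, regard \eqref{eq:dim4_alpha} as a quadratic $c_4X^2+c_2X+c_0$ in $X=w^2$, and use that in case~\eqref{item:dim4_case1} its two roots form a non-real conjugate pair. The paper then writes those roots in polar form $re^{\pm i\theta}$ and bounds $\abs{a}=\sqrt{(r+r\cos\theta)/2}\leq\sqrt{r}$, discarding the $\cos\theta$ term; it is thereby forced to prove $r<9/4$, which is \emph{not} unconditional in $(\delta_1,\delta_2)$, so it invokes $\Disc(F)<0$, splits into the two regimes of~\eqref{condi1}, and runs monotonicity estimates on auxiliary rational functions. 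Your identity
\[
a^2=\frac{1}{2}\left(\sqrt{\frac{c_0}{c_4}}-\frac{c_2}{2c_4}\right)
\]
is exactly the paper's quantity $(r+r\cos\theta)/2$, computed via Vieta rather than bounded from above; keeping the exact value makes the target $a^2<9/4$ equivalent (the squaring step is legitimate and reversible since $9c_4+c_2>0$ and $c_0,c_4>0$) to a polynomial inequality that holds for \emph{all} $\delta_1,\delta_2\geq 0$. I checked the arithmetic at the crux: $9c_4+c_2=(122+50\delta_1+13\delta_2)/48$ is correct, and the difference of the two sides of your cleared inequality does expand to $2560\delta_1^2+1312\delta_1\delta_2+160\delta_2^2+11840\delta_1+2944\delta_2+14464$, so no discriminant constraint or case analysis is needed beyond the conjugate-pair structure itself. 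The trade-off is worth noting: because the paper bounds $\sqrt{r}=\sqrt{a^2+b^2}$, it actually proves that every case-\eqref{item:dim4_case1} root lies in the open disc of radius $3/2$ centred at $-1/2$, which is stronger than the real-part bound; your argument yields only $\abs{a}<3/2$, but that is all the proposition asserts, the deduction of~\eqref{eq:HS} from it is immediate as you say, and you obtain it with a two-line positivity check in place of the paper's page of estimates.
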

\begin{proof}
We consider $\alpha=a+bi\in\C$ satisfying~\eqref{eq:dim4_alpha}. We need to show that $\alpha$ satisfies $-3/2 < \Re\alpha < 3/2$. Let
\[
F(z)=(2+2\delta_1+\delta_2)z^2 + \left(43+7\delta_1-\frac{5}{2}\delta_2 \right)z + \frac{105}{8}-\frac{15}{8}\delta_1+\frac{9}{16}\delta_2
\]
and consider the roots of $F$. By our hypothesis we have that $\Disc(F)<0$. Let $\beta$ and $\gamma$ be the roots of $F$, and write $\beta=re^{\theta i}$ with $r>0$ and $0<\theta<\pi$, so that $\gamma=\overline{\beta}=re^{-\theta i}$. Thus the roots $\alpha$ are given by
\[
\sqrt{r}e^{\frac{\theta}{2} i},\qquad
\sqrt{r}e^{(\pi-\frac{\theta}{2}) i},\qquad
\sqrt{r}e^{-\frac{\theta}{2} i},\qquad\text{ and }\qquad
\sqrt{r}e^{-(\pi-\frac{\theta}{2}) i}.
\]
Hence it is sufficient to show that
\[
0<\Re{\sqrt{r}e^{\frac{\theta}{2}i}}=\sqrt{r}\cos\frac{\theta}{2}=\sqrt{\frac{r+r\cos\theta}{2}}\leq\frac{3}{2}.
\]
Since $F(z)=(2+2\delta_1+\delta_2)(z-\beta)(z-\gamma)$, we have
\[
r=\sqrt{\beta \gamma}=\sqrt{\frac{\frac{105}{8}-\frac{15}{8}\delta_1+\frac{9}{16}\delta_2}{2+2\delta_1+\delta_2}}=\frac{1}{4}\sqrt{\frac{210-30\delta_1+9\delta_2}{2+2\delta_1+\delta_2}}.
\]
Moreover, one has
\[
\Disc(F) = 4(16\delta_1^2-8(\delta_2-16)\delta_1+\delta_2^2-68\delta_2+436).
\]
Let $h(\delta_1):=\Disc(F)/4$ be regarded as a quadratic in $\delta_1$. The range of $\delta_1$ such that $h(\delta_1)<0$ is given by:
\[
\frac{\delta_2-16-6\sqrt{\delta_2-5}}{4}<\delta_1<\frac{\delta_2-16+6\sqrt{\delta_2-5}}{4}.
\]
Since $\delta_1\geq 1$, we conclude that $(\delta_2-16+6 \sqrt{\delta_2-5})/4>1$. Hence $\delta_2>38-12\sqrt{6}$, and the condition that $\Disc(F)<0$ is equivalent to:
\begin{align}
\nonumber
&\phantom{m}\delta_2>38-12\sqrt{6},\qquad\text{ and}\\
\label{condi1}
&\begin{cases}
1\leq \delta_1<\dfrac{\delta_2-16+6\sqrt{\delta_2-5}}{4},
&\text{ when }\delta_2\leq 38+12\sqrt{6};\vspace{0.5em}\\
\dfrac{\delta_2-16-6\sqrt{\delta_2-5}}{4}<\delta_1<\dfrac{\delta_2-16+6\sqrt{\delta_2-5}}{4},
&\text{ when }\delta_2>38+12\sqrt{6}.
\end{cases}
\end{align}

\noindent
When $\delta_1$ and $\delta_2$ satisfy the first condition of~\eqref{condi1}, we have:
\[
\frac{210-30\delta_1+9\delta_2}{2+2\delta_1+\delta_2}=-15+\frac{24(\delta_2+10)}{2+2\delta_1+\delta_2}\leq -15+\frac{24(\delta_2+10)}{\delta_2+4}<-15+\frac{24(38-12\sqrt{6}+10)}{38-12\sqrt{6}+4}<81.
\]
When the second condition of~\eqref{condi1} is satisfied, we have:
\begin{align*}
\frac{210-30\delta_1+9\delta_2}{2+2\delta_1+\delta_2}&= -15+\frac{24(\delta_2+10)}{2+2\delta_1+\delta_2}\\
&<-15+\frac{24(\delta_2+10)}{\frac{\delta_2-16-6\sqrt{\delta_2-5}}{2}+\delta_2+2}\\
&=-15+\frac{16(\delta_2+10)}{\delta_2-4-2\sqrt{\delta_2-5}}\qquad(=:H(\delta_2))\\
&<-15+\frac{16(38+12\sqrt{6}+10)}{38+12\sqrt{6}-4-2\sqrt{38+12\sqrt{6}-5}}\\
&\qquad\qquad\text{(since }\frac{dH(\delta_2)}{d\delta_2}<0\text{ when }\delta_2>38+12\sqrt{6}\text{)}\\
&< 81
\end{align*}
We conclude that
\[
\sqrt{\frac{r+r \cos\theta}{2}}\leq\sqrt{r}=\frac{1}{2}\left(\frac{210-30\delta_1+9\delta_2}{2+2\delta_1+\delta_2}\right)^{\frac{1}{4}}< \frac{3}{2},
\]
as required.
\end{proof}

\section{Dimension five}\label{sec:dim5}
Let $P$ be a $5$-dimensional reflexive polytope with $\delta$-vector $(1,\delta_1,\delta_2,\delta_2,\delta_1,1)$. Then one of the roots of $L_P$ is $-1/2$. The remaining roots of $L_P$ fall into the four possible cases~\eqref{item:dim4_case2}--\eqref{item:dim4_case1} described at the beginning of~\S\ref{sec:dim4}.

\subsection{Five-dimensional CL-polytopes}
\begin{thm}\label{thm:dim_5_conditions}
Let $P$ be a $5$-dimensional reflexive polytope with $\delta$-vector $(1,\delta_1,\delta_2,\delta_2,\delta_1,1)$. Then $P$ is a CL-polytope if and only if either
\begin{enumerate}
\item\label{item:dim_5_conditions_1}
$\delta_1=237$ and $\delta_2=1682$, or
\item\label{item:dim_5_conditions_2}
$\delta_2<7\delta_1+23$, $71\delta_1\leq 9\delta_2+1689$, and $41(\delta_1+9\delta_2-9)^2\leq 2(41\delta_1+96)^2+2(41\delta_2-85)^2$.
\end{enumerate}
In particular, if $P$ is a CL-polytope then $\delta_1\leq 237$ and $\delta_2\leq 1682$.
\end{thm}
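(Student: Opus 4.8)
The plan is to mirror the strategy of Theorem~\ref{thm:dim_4_conditions} exactly, since the five-dimensional case has the same structure once the forced root $z=-1/2$ is divided out. First I would write down $L_P(z)$ explicitly from Lemma~\ref{lem:main_lemma} using the $\delta$-vector $(1,\delta_1,\delta_2,\delta_2,\delta_1,1)$, obtaining a degree~$5$ polynomial. Because $P$ is reflexive of odd dimension, Macdonald reciprocity forces $-1/2$ to be a root; factoring out $(z+1/2)$ leaves a quartic whose four roots are the ones to be analysed. The polytope $P$ is a CL-polytope precisely when these four remaining roots also have real part $-1/2$, i.e.\ they are of the form $-1/2\pm b_1 i,-1/2\pm b_2 i$.

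The key step is then to substitute $z=-1/2+\beta i$ into this quartic factor and clear denominators, producing a function $G(\beta)$ that is a genuine quadratic in $\beta^2$ (the odd powers of $\beta$ vanish by the $-1/2$-symmetry). The condition that both roots of the original quartic lie on the line $\Re z=-1/2$ is exactly the condition that $G(\beta)=0$ has two nonnegative real solutions in $\beta^2$. I would apply Lemma~\ref{lem:quadratic} to the quadratic $A(\beta^2)^2+B(\beta^2)+C$: writing $A,B,C$ as explicit affine-linear expressions in $\delta_1,\delta_2$, the lemma gives that all solutions are nonnegative reals if and only if either $B=C=0$ (the degenerate equality case, which should pin down the single point $(\delta_1,\delta_2)=(237,1682)$ — the $5$-cube value), or $B<0$, $C\geq 0$, and $\Delta\geq 0$. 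Translating $B<0$, $C\geq 0$, and $\Delta\geq 0$ into inequalities in $\delta_1,\delta_2$ should yield respectively the three stated conditions $\delta_2<7\delta_1+23$, $71\delta_1\leq 9\delta_2+1689$, and the discriminant inequality $41(\delta_1+9\delta_2-9)^2\leq 2(41\delta_1+96)^2+2(41\delta_2-85)^2$. As in the four-dimensional discriminant, I expect $\Delta$ to rearrange into a difference of a sum of squares and a single square, revealing the parabola geometry.

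For the final ``in particular'' clause, I would argue exactly as in Theorem~\ref{thm:dim_4_conditions}: the first two inequalities in case~\eqref{item:dim_5_conditions_2} are strict/weak linear bounds that together force $\delta_1<237$ and $\delta_2<1682$, and combining these with the equality case~\eqref{item:dim_5_conditions_1} gives $\delta_1\leq 237$ and $\delta_2\leq 1682$. Concretely, $\delta_2<7\delta_1+23$ and $71\delta_1\leq 9\delta_2+1689$ can be combined to eliminate $\delta_2$ and bound $\delta_1$; substituting back bounds $\delta_2$.

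The main obstacle is purely computational bookkeeping rather than conceptual: correctly expanding the binomial coefficients in Lemma~\ref{lem:main_lemma} for degree~$5$, verifying that $(z+1/2)$ divides $L_P$ cleanly, and then substituting $z=-1/2+\beta i$ and simplifying to the precise coefficients $A,B,C$ so that the discriminant collapses into the stated sum-of-squares form with the correct constants $96$, $85$, and factor~$2$. The one place warranting genuine care is confirming that the chosen orientation of the inequality $\delta_2<7\delta_1+23$ (coming from $B<0$) selects the bounded region containing the CL-polytopes rather than one of the other regions cut out by the parabola and its tangent; as the authors note in the four-dimensional discussion, this inequality is a matter of choice among equivalent conditions, and I would verify the orientation against the known $5$-cube value $(237,1682)$, which must lie on the boundary as the degenerate equality case.
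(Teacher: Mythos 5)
Your proposal follows the paper's proof exactly: factor out the forced root $z=-1/2$, substitute $z=-1/2+\beta i$ into the remaining quartic to obtain a quadratic in $\beta^2$, apply Lemma~\ref{lem:quadratic} (with $B=C=0$ giving the cube point $(237,1682)$ and $B<0$, $C\geq 0$, $\Disc\geq 0$ giving the three inequalities), and combine the first two inequalities to deduce the bounds $\delta_1\leq 237$, $\delta_2\leq 1682$. The computations you flag as needing care are exactly the ones the paper carries out, and your identification of which lemma case yields which condition is correct.
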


\begin{proof}
From Lemma~\ref{lem:main_lemma} we have
\[
L_P(z)={z+5 \choose 5} + {z \choose 5}+\delta_1\left({z+4 \choose 5} + {z+1 \choose 5}\right)+
\delta_2\left({z+3 \choose 5} + {z+2 \choose 5}\right).
\]
Since $-1/2$ is a root, we know that $L_P$ is divisible by $z+1/2$. Set $f(z):=5!L_P(z)/(z+1/2)$.
Substituting $z=-1/2+\beta i$ in $f$ and multiplying through by $5!$ gives
\[
G(\beta):=2(1+\delta_1+\delta_2)\beta^4-5(23+7\delta_1-\delta_2)\beta^2+\frac{1}{8}(1689-71\delta_1+9\delta_2).
\]
This is a quadratic in $\beta^2$, and $G(\beta)=0$ if and only if $L_P(-1/2+\beta i)=0$. The two cases follow from Lemma~\ref{lem:quadratic}. By combining the first two inequalities in case~\eqref{item:dim_5_conditions_2} we see that $\delta_1<237$ and $\delta_2<1682$; including case~\eqref{item:dim_5_conditions_1} we have that $\delta_1\leq 237$ and $\delta_2\leq 1682$.
\end{proof}

\begin{example}
The case of Theorem~\ref{thm:dim_5_conditions}\eqref{item:dim_5_conditions_1} can occur as the $\delta$-vector of the $5$-dimensional cube $\{-1,1\}^5$.
\end{example}

Once more we see that the inequalities in Theorem~\ref{thm:dim_5_conditions}\eqref{item:dim_5_conditions_2} are determined by the discriminant $\Delta$ of $G$: the parabola $41(\delta_1+9\delta_2-9)^2=2(41\delta_1+96)^2+2(41\delta_2-85)^2$. The tangent at the point $(\delta_1,\delta_2)=(237,1682)$ is given by $71\delta_1=9\delta_2+1689$, and together these two equations (or, more accurately, the corresponding two inequalities) cut out three regions in the positive quadrant. The inequality $\delta_2<7\delta_1+23$ distinguishes which of these regions contains the $\delta$-vectors for the CL-polygons and, as we shall see in Theorem~\ref{thm:dim_5_real} below, which contains the real reflexive polygons. The situation is essentially the same as the case in $4$-dimensions illustrated in Figure~\ref{fig:3d_parabola_and_tangent}. As in the $4$-dimensional case, we expect there to be a ``missing inequality'' that excludes the top-left-most region from consideration.

\begin{cor}\label{cor:dim_5_conditions}
Let $P$ be a $5$-dimensional reflexive polytope. Then $P$ is a CL-polytope if and only if either
\begin{enumerate}
\item
$\vol{P}=32$ and $\abs{P\cap\Z^5}=243$, or
\item
$15\vol{P}<2\left(\abs{P\cap\Z^5}-3\right)$, $4\abs{P\cap\Z^5}\leq 27(\vol{P}+4)$, and $\left(\abs{P\cap\Z^5}-3-6 \vol{P}\right)^2\geq 72\vol{P}$.
\end{enumerate}
\end{cor}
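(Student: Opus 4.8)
The plan is to deduce Corollary~\ref{cor:dim_5_conditions} from Theorem~\ref{thm:dim_5_conditions} by rewriting the three inequalities on $\delta_1,\delta_2$ as inequalities on $\vol{P}$ and $\abs{P\cap\Z^5}$, exactly as was done for the $4$-dimensional Corollary~\ref{cor:dim_4_conditions}. The whole argument rests on the dictionary between the $\delta$-vector and these two geometric invariants. Writing $V=\vol{P}$ and $N=\abs{P\cap\Z^5}$, Ehrhart's identity $\delta_1=\abs{P\cap\Z^d}-d-1$ gives $\delta_1=N-6$, while $\sum_i\delta_i=5!\,\vol{P}$ applied to the palindromic vector $(1,\delta_1,\delta_2,\delta_2,\delta_1,1)$ gives $2+2\delta_1+2\delta_2=120V$, i.e.\ $1+\delta_1+\delta_2=60V$. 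Inverting yields the substitution
\[
\delta_1=N-6,\qquad \delta_2=60V+5-N,
\]
which I would use throughout.

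First I would handle the two linear inequalities. Substituting into $\delta_2<7\delta_1+23$ gives $60V+5-N<7(N-6)+23$, which rearranges to $15V<2(N-3)$; substituting into $71\delta_1\leq 9\delta_2+1689$ gives $71(N-6)\leq 9(60V+5-N)+1689$, which rearranges to $4N\leq 27(V+4)$. These are routine sign-tracking computations and reproduce the first two displayed inequalities of the corollary verbatim.

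The substantive step is the discriminant inequality $41(\delta_1+9\delta_2-9)^2\leq 2(41\delta_1+96)^2+2(41\delta_2-85)^2$. Rather than manipulate this $41$-coefficient form, I would return to its origin: it is precisely $\Disc\geq 0$ for the quadratic $A(\beta^2)^2+B(\beta^2)+C$ from the proof of Theorem~\ref{thm:dim_5_conditions}, with $A=2(1+\delta_1+\delta_2)$, $B=-5(23+7\delta_1-\delta_2)$, and $C=\tfrac18(1689-71\delta_1+9\delta_2)$. Under the substitution these collapse to $A=120V$, $B=-20(2N-15V-6)$, and $C=\tfrac52\bigl(27(V+4)-4N\bigr)$, whence $\Disc=B^2-4AC=400\bigl[(2N-15V-6)^2-3V(27V-4N+108)\bigr]$. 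The crux, which I expect to be the only non-mechanical point, is verifying the algebraic identity
\[
(2N-15V-6)^2-3V(27V-4N+108)=4\bigl[(N-3-6V)^2-72V\bigr];
\]
expanding both sides, the $V^2$, $NV$, $N$, and constant terms should agree and the linear $V$-terms should differ by exactly $288V=4\cdot 72V$. Granting this, $\Disc\geq 0$ is equivalent to $(N-3-6V)^2\geq 72V$, the third inequality. The main risk here is purely clerical---carrying the $\tfrac18$ and the factor $5$ in $C$ correctly, so that the remainder after extracting the perfect square is precisely $-72V$ and not some shifted constant.

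Finally I would confirm the exceptional datum: the pair $(\delta_1,\delta_2)=(237,1682)$ of Theorem~\ref{thm:dim_5_conditions}\eqref{item:dim_5_conditions_1} maps under the dictionary to $V=32$ and $N=243$, since $\delta_1=243-6=237$ and $60\cdot 32+5-243=1682$, giving exactly $\vol{P}=2^5$ and $\abs{P\cap\Z^5}=3^5$ as stated.
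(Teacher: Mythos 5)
Your proposal is correct and matches the paper's proof, which is exactly the one-line instruction to substitute $2(1+\delta_1+\delta_2)=5!\vol{P}$ and $\delta_1=\abs{P\cap\Z^5}-6$ into Theorem~\ref{thm:dim_5_conditions}; your dictionary $\delta_1=N-6$, $\delta_2=60V+5-N$ is that same substitution, and your algebra (including the identity $(2N-15V-6)^2-3V(27V-4N+108)=4\bigl[(N-3-6V)^2-72V\bigr]$ and the check that $(237,1682)$ corresponds to $(V,N)=(32,243)$) verifies correctly. You have simply written out explicitly the computation the paper leaves to the reader.
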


\begin{proof}
Simply make the substitutions $2(1+\delta_1+\delta_2)=5!\vol{P}$ and $\delta_1=\abs{P\cap\Z^5}-6$ in Theorem~\ref{thm:dim_5_conditions}.
\end{proof}

\subsection{Five-dimensional real reflexive polytopes}
\begin{thm}\label{thm:dim_5_real}
Let $P$ be a $5$-dimensional reflexive polytope with $\delta$-vector $(1,\delta_1,\delta_2,\delta_2,\delta_1,1)$. Then $P$ is real if and only if either
\begin{enumerate}
\item\label{item:dim_5_real_1}
$\delta_1=237$ and $\delta_2=1632$, or
\item\label{item:dim_5_real_2}
$\delta_2>7\delta_1+23$, $71\delta_1\leq 9\delta_2+1689$, and $41(\delta_1+9\delta_2-9)^2\leq 2(41\delta_1+96)^2+2(41\delta_2-85)^2$.
\end{enumerate}
\end{thm}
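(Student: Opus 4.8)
The plan is to mirror, essentially verbatim, the argument already carried out for the CL case in Theorem~\ref{thm:dim_5_conditions}, exploiting the fact that "real" and "CL" differ only by which sign the non-central roots of $L_P$ take. Concretely, I would reuse the Ehrhart polynomial from Lemma~\ref{lem:main_lemma}, the factorisation $f(z):=5!\,L_P(z)/(z+1/2)$, and the associated quartic $G$ computed there. The fifth root of $L_P$ is automatically $-1/2$, so $P$ is real precisely when the four remaining roots are real. By Macdonald reciprocity these four roots are symmetric about $-1/2$, hence come in pairs $-1/2\pm\alpha$; writing $z=-1/2+\alpha$ with $\alpha\in\R$ therefore produces an even quartic in $\alpha$, i.e.\ a quadratic in $\alpha^2$. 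The key observation is that this is exactly the CL quadratic $G$ under the substitution $\beta=\alpha/i$, that is $\beta^2\mapsto-\alpha^2$: concretely
\[
2(1+\delta_1+\delta_2)\alpha^4+5(23+7\delta_1-\delta_2)\alpha^2+\tfrac18(1689-71\delta_1+9\delta_2)=0,
\]
which is obtained from $G(\beta)$ by flipping the sign of the middle coefficient while leaving the leading and constant terms unchanged.

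With this in hand, I would apply Lemma~\ref{lem:quadratic} to the quadratic in $\alpha^2$, demanding that all of its solutions be non-negative real (so that $\alpha$, and hence $z$, is real). Since the leading coefficient $2(1+\delta_1+\delta_2)$ is positive, the hypotheses of the lemma sort into three conditions. Because squaring removes the sign of the middle coefficient, the discriminant is literally the same as in the CL computation, so the discriminant inequality $41(\delta_1+9\delta_2-9)^2\le 2(41\delta_1+96)^2+2(41\delta_2-85)^2$ is unchanged; likewise the condition $C\ge0$, giving the tangent inequality $71\delta_1\le 9\delta_2+1689$, is unchanged. The only difference is the requirement that the middle coefficient be negative: here this is $5(23+7\delta_1-\delta_2)<0$, i.e.\ $\delta_2>7\delta_1+23$, which is precisely the flip of the CL inequality $\delta_2<7\delta_1+23$. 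This yields case~\eqref{item:dim_5_real_2}. The degenerate branch $B=C=0$ of Lemma~\ref{lem:quadratic} forces both $\alpha^2$-roots to vanish, collapsing all four remaining roots onto $-1/2$; solving $23+7\delta_1-\delta_2=0$ and $1689-71\delta_1+9\delta_2=0$ simultaneously gives the special point of case~\eqref{item:dim_5_real_1}, which coincides with the $5$-cube computed in Example~\ref{eg:delta_d_cube}.

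I expect the only genuine care (rather than routine algebra, which the sign-flip observation reduces to nothing) to lie in justifying the equivalence cleanly: that $P$ is real if and only if \emph{both} roots in $\alpha^2$ are non-negative real. The discriminant condition is what excludes case~\eqref{item:dim4_case1}, where the $\alpha^2$-values would be a genuine complex-conjugate pair, while the sign conditions exclude cases~\eqref{item:dim4_case2} and~\eqref{item:dim4_case4}, where at least one $\alpha^2$ is negative and the corresponding root sits on $\Re z=-1/2$ off the real axis. One should also double-check the shared degenerate point: the computation above, together with the $5$-cube's $\delta$-vector $(1,237,1682,1682,237,1)$ from Example~\ref{eg:delta_d_cube}, forces $\delta_2=1682$ in case~\eqref{item:dim_5_real_1}, so the value $1632$ in the statement appears to be a typo for $1682$ (as it must agree with the CL cube case~\eqref{item:dim_5_conditions_1}).
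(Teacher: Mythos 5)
Your proof is correct and is essentially identical to the paper's: the paper likewise substitutes $z=-1/2+\alpha$ into $f(z)=5!\,L_P(z)/(z+1/2)$ to obtain the quadratic~\eqref{eq:dim5_alpha} in $\alpha^2$ and then invokes Lemma~\ref{lem:quadratic}, with the sign flip of the middle coefficient producing exactly the flipped inequality $\delta_2>7\delta_1+23$ while the $C\geq 0$ and discriminant conditions are unchanged. Your closing observation is also right: solving $B=C=0$ gives $\delta_1=237$, $\delta_2=1682$ (the $5$-cube), so the value $1632$ in case~(\ref{item:dim_5_real_1}) of the statement is indeed a typo for $1682$.
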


\begin{proof}
Substituting $z=-1/2+\alpha$, where $\alpha\in\R$, into $f(z)$ in the proof of Theorem~\ref{thm:dim_5_conditions} gives
\begin{equation}\label{eq:dim5_alpha}
2(1+\delta_1+\delta_2)\alpha^4+5(23+7\delta_1-\delta_2)\alpha^2+\frac{1}{8}(1689-71\delta_1+9\delta_2)=0.
\end{equation}
This is a quadratic in $\alpha^2$ and the result follows from Lemma~\ref{lem:quadratic}.
\end{proof}

\begin{cor}\label{cor:dim_5_real}
Let $P$ be a $5$-dimensional reflexive polytope. Then $P$ is a CL-polytope if and only if either
\begin{enumerate}
\item
$\vol{P}=32$ and $\abs{P\cap\Z^5}=243$, or
\item
$15\vol{P}>2\left(\abs{P\cap\Z^5}-3\right)$, $4\abs{P\cap\Z^5}\leq 27(\vol{P}+4)$, and $\left(\abs{P\cap\Z^5}-3-6\vol{P}\right)^2\geq 72\vol{P}$.
\end{enumerate}
\end{cor}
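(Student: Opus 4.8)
The plan is to derive the corollary from Theorem~\ref{thm:dim_5_real} by exactly the change of variables used to prove Corollary~\ref{cor:dim_5_conditions}, translating the two standard relations between the $\delta$-vector and the geometry of $P$ into the three inequalities. For the palindromic $\delta$-vector $(1,\delta_1,\delta_2,\delta_2,\delta_1,1)$ the identity $\delta_0+\cdots+\delta_5=5!\vol{P}$ reads $2(1+\delta_1+\delta_2)=5!\vol{P}$, while $\delta_1=\abs{P\cap\Z^5}-6$. Writing $V:=\vol{P}$ and $N:=\abs{P\cap\Z^5}$ and solving these two equations, I obtain $\delta_1=N-6$ and $\delta_2=60V-N+5$, and I would substitute these expressions into each of the conditions in Theorem~\ref{thm:dim_5_real}\eqref{item:dim_5_real_2}.

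For the two linear conditions the substitution is immediate: $\delta_2>7\delta_1+23$ rearranges to $15V>2(N-3)$, and $71\delta_1\le 9\delta_2+1689$ rearranges to $4N\le 27(V+4)$, which are precisely the first two inequalities of item~(ii). The degenerate case of Lemma~\ref{lem:quadratic} (where $B=C=0$), realised by the $5$-cube with $\delta$-vector $(1,237,1682,1682,237,1)$ of Theorem~\ref{thm:dim_5_real}\eqref{item:dim_5_real_1}, translates under the same relations into $V=32$ and $N=243$, giving item~(i); here one simply checks that $\delta_1=237,\ \delta_2=1682$ forces both the linear and constant coefficients of the relevant quadratic to vanish.

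The one step demanding genuine computation is the discriminant condition
\[
41(\delta_1+9\delta_2-9)^2\le 2(41\delta_1+96)^2+2(41\delta_2-85)^2,
\]
and I expect its collapse to the clean form $(N-3-6V)^2\ge 72V$ to be the main (though entirely routine) obstacle. The most economical route is to observe that this is exactly the condition $\Disc\ge 0$ for the quadratic-in-$\beta^2$ appearing in the proof of Theorem~\ref{thm:dim_5_conditions}, and that passing between the CL and the real setting only reverses the sign of the linear coefficient, leaving $\Disc=B^2-4AC$ unchanged. It therefore suffices to recompute this discriminant directly in the coordinates $V,N$, where the coefficients become $A=120V$, $B=\pm20(15V-2N+6)$ (the sign being irrelevant for $\Disc$), and $C=\tfrac{5}{2}(27V-4N+108)$, yielding after expansion and cancellation
\[
\Disc=1600\bigl((N-3-6V)^2-72V\bigr),
\]
so that $\Disc\ge 0$ is precisely the third inequality of item~(ii). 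Consequently the second and third inequalities of the corollary coincide verbatim with those of Corollary~\ref{cor:dim_5_conditions}, and only the first inequality is reversed, exactly as the relationship between Theorems~\ref{thm:dim_5_conditions} and~\ref{thm:dim_5_real} predicts. Combining the degenerate case with item~(ii) then gives the stated characterisation.
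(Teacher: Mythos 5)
Your proposal is correct and follows essentially the paper's own (implicit) route: the corollary is obtained from Theorem~\ref{thm:dim_5_real} by the substitutions $2(1+\delta_1+\delta_2)=5!\vol{P}$ and $\delta_1=\abs{P\cap\Z^5}-6$, exactly as Corollary~\ref{cor:dim_5_conditions} is obtained from Theorem~\ref{thm:dim_5_conditions}, and your computation $\Disc=1600\bigl(\left(\abs{P\cap\Z^5}-3-6\vol{P}\right)^2-72\vol{P}\bigr)$ for the quadratic in $\alpha^2$ checks out, so the only change from the CL case is the reversal of the first (linear) inequality. Note that along the way you have silently corrected two typos in the paper: the corollary should say ``real polytope'' rather than ``CL-polytope'' (it characterises the real case), and Theorem~\ref{thm:dim_5_real}\eqref{item:dim_5_real_1} should read $\delta_2=1682$ (the $5$-cube value, which you correctly use), not $1632$.
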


\begin{prop}\label{dim5_real_volume}
Let $P$ be a $5$-dimensional real reflexive polytope. Then $\vol{P}\geq 16/5$.
\end{prop}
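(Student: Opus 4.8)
The plan is to imitate the proof of Proposition~\ref{dim4_real_volume}, using the characterisation of $5$-dimensional real reflexive polytopes in Theorem~\ref{thm:dim_5_real} together with the relation $\vol{P}=(1+\delta_1+\delta_2)/60$ (from $5!\vol{P}=2(1+\delta_1+\delta_2)$) and the fact that reflexivity forces $\delta_1\geq 1$ by Hibi's positivity. In case~\eqref{item:dim_5_real_1} of Theorem~\ref{thm:dim_5_real} the entries $\delta_1,\delta_2$ are enormous and $\vol{P}\geq 16/5$ is immediate, so all the work is in case~\eqref{item:dim_5_real_2}, where the three stated inequalities are exactly the conclusion of Lemma~\ref{lem:quadratic} applied to~\eqref{eq:dim5_alpha} viewed as a quadratic in $\alpha^2$, with $A=2(1+\delta_1+\delta_2)$, $B=5(23+7\delta_1-\delta_2)$ and $C=\tfrac18(1689-71\delta_1+9\delta_2)$.

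First I would compute the discriminant $\Delta=B^2-4AC$ of this quadratic, which is the content of the third inequality. A direct expansion gives
\[
\Delta=1296\delta_1^2-288\delta_1\delta_2+16\delta_2^2+6432\delta_1-2848\delta_2+11536.
\]
The quadratic part factors as $16(9\delta_1-\delta_2)^2$, and regarding $\Delta$ as a quadratic in $\delta_1$ one finds that its inner discriminant collapses to $1920^2(3\delta_2-5)$, so that $\Delta\geq 0$ is equivalent to
\[
\delta_1\geq\frac{3\delta_2-67+20\sqrt{3\delta_2-5}}{27}\qquad\text{ or }\qquad\delta_1\leq\frac{3\delta_2-67-20\sqrt{3\delta_2-5}}{27}.
\]
Recognising the clean square root $\sqrt{3\delta_2-5}$ (the analogue of $\sqrt{\delta_2-5}$ in dimension four) is the crux of the argument.

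With these two branches in hand I would feed in the first inequality $\delta_2>7\delta_1+23$, i.e.\ $\delta_1<(\delta_2-23)/7$. On the upper branch, combining $\delta_1\geq(3\delta_2-67+20\sqrt{3\delta_2-5})/27$ with $\delta_1<(\delta_2-23)/7$ forces $140\sqrt{3\delta_2-5}<6\delta_2-152$; isolating the root and squaring (valid once $\delta_2>76/3$) yields $3\delta_2^2-5052\delta_2+10092>0$, whose roots are $2$ and $1682$, so $\delta_2>1682$ and $\vol{P}$ is far larger than $16/5$. On the lower branch, the constraint $\delta_1\leq(3\delta_2-67-20\sqrt{3\delta_2-5})/27$ is compatible with $\delta_1\geq 1$ only if $3\delta_2-94\geq 20\sqrt{3\delta_2-5}$; squaring (valid once $\delta_2\geq 94/3$) gives $3\delta_2^2-588\delta_2+3612\geq 0$, whose relevant root is $98+20\sqrt{21}\approx 189.65$, so $\delta_2\geq 190$. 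Then $\vol{P}=(1+\delta_1+\delta_2)/60\geq(1+1+190)/60=16/5$, as required, with equality forced only by the $\delta$-vector $(1,1,190,190,1,1)$.

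The main obstacle is entirely computational: expanding the discriminant correctly, spotting the factorisation $16(9\delta_1-\delta_2)^2$ and the reduction of the inner discriminant to $1920^2(3\delta_2-5)$ so that the square roots come out clean, and then carrying out the two rounds of ``isolate-and-square'' without sign errors. One must also be careful to record at each squaring that it is valid only once the relevant linear side is nonnegative, since this is precisely what pins down the admissible range of $\delta_2$ (and hence separates the $\delta_2>1682$ and $\delta_2\geq 190$ conclusions). No idea beyond Lemma~\ref{lem:quadratic} and the dimension-four template is needed.
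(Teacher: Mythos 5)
Your proposal is correct and follows essentially the same route as the paper's proof: invoke Theorem~\ref{thm:dim_5_real}, reduce to case~(\ref{item:dim_5_real_2}), split the discriminant condition into the two branches of~\eqref{eq:condition_5}, and conclude $\delta_2>1682$ on the upper branch (via $\delta_1<(\delta_2-23)/7$) and $\delta_2\geq\lceil 98+20\sqrt{21}\rceil=190$ on the lower branch (via $\delta_1\geq 1$), giving $\vol{P}\geq(1+1+190)/60=16/5$. Your computations (the factorisation of the discriminant, the roots $2$ and $1682$, and the root $98+20\sqrt{21}$) all check out and merely make explicit what the paper leaves implicit.
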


\begin{proof}
By Theorem~\ref{thm:dim_5_real}, $P$ is real if and only if $\delta_1$ and $\delta_2$ satisfy either condition~\eqref{item:dim_5_real_1}, or condition~\eqref{item:dim_5_real_2}. In case~\eqref{item:dim_5_real_1}, since $\vol{P}=2(1+\delta_1+\delta_2)/120$, we are done. In case~\eqref{item:dim_5_real_2}, by the third inequality $41(\delta_1+9\delta_2-9)^2\leq 2(41\delta_1+96)^2+2(41\delta_2-85)^2$, we have
\begin{align}\label{eq:condition_5}
\delta_1\geq\frac{3\delta_2-67+20\sqrt{3\delta_2-5}}{27},\qquad\text{ or }\qquad
1\leq\delta_1\leq\frac{3\delta_2-67-20\sqrt{3\delta_2-5}}{27}.
\end{align}
Moreover, by inequality $\delta_2>7\delta_1+23$, we have that $\delta_1<(\delta_2-23)/7$.

When the first condition in~\eqref{eq:condition_5} is satisfied, using $\delta_1\leq\delta_2$ we see that
\[
\frac{3\delta_2-67+20\sqrt{3\delta_2-5}}{27}\leq\delta_1<\frac{\delta_2-23}{7}.
\]
Hence we obtain $\delta_2>1682$, and so we are done. When the second condition in~\eqref{eq:condition_5} is satisfied, using the fact that $(3\delta_2-67-20\sqrt{3\delta_2-5})/27\geq 1$,
we obtain $\delta_2\geq\lceil 98+20\sqrt{21}\rceil=190$. Hence $2(1+\delta_1+\delta_2)/120\geq (1+1+190)/60=16/5$, as required.
\end{proof}

\noindent
By the proof of Proposition~\ref{dim5_real_volume}, $P$ is a $5$-dimensional real reflexive polytope with $\vol{P}=16/5$ if and only if its $\delta$-vector equals $(1,1,190,190,1,1)$. One can show that no such reflexive polytopes exist by the method of~\S\ref{subsec:find_P_from_delta} below.

\subsection{Remaining cases}
\begin{prop}\label{dim5_mixed_volume}
Let $P$ be a $5$-dimensional polytope, and suppose that there are two roots of $L_P$ which are real, and that there are two roots with real part $-1/2$. Then $\vol{P}\geq 19/20$.
\end{prop}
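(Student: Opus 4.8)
The plan is to follow the proof of Proposition~\ref{dim4_mixed_volume} almost verbatim, replacing the four-dimensional data with the five-dimensional coefficients. After factoring out the guaranteed root $-1/2$, the remaining four roots of $L_P$ are the roots of $f(z)=5!L_P(z)/(z+1/2)$, and substituting $z=-1/2+w$ into $f$ shows (compare the expression for $G(\beta)$ in the proof of Theorem~\ref{thm:dim_5_conditions} with equation~\eqref{eq:dim5_alpha}) that these four roots are exactly $z=-1/2+w$ as $w$ runs over the solutions of the biquadratic
\[
2(1+\delta_1+\delta_2)w^4+5(23+7\delta_1-\delta_2)w^2+\tfrac18(1689-71\delta_1+9\delta_2)=0.
\]
Writing $s=w^2$ this is a quadratic $As^2+Bs+C=0$ with leading coefficient $A=2(1+\delta_1+\delta_2)>0$ and constant term $C=\tfrac18(1689-71\delta_1+9\delta_2)$.

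Next I would convert the hypothesis into a sign condition on $C$. The four roots split into two real roots (coming from a positive value of $s$, giving $z=-1/2\pm\sqrt{s}$) together with two roots of real part $-1/2$ (coming from a negative value of $s$, so that $\sqrt{s}$ is imaginary) precisely when the quadratic in $s$ has one positive and one negative root. This holds if and only if $s_1s_2=C/A<0$, i.e.\ (since $A>0$) if and only if $C<0$, equivalently $71\delta_1-9\delta_2>1689$. Note that the sign of $C$ already forces both roots in $s$ to be real, so no separate discriminant check is required.

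Finally I would read off the volume bound. Since $P$ is reflexive it contains the interior lattice point $0$, so Hibi's Lower Bound Theorem gives $\delta_1\leq\delta_2$. Substituting $\delta_2\geq\delta_1$ into $71\delta_1-9\delta_2>1689$ yields $62\delta_1>1689$, hence $\delta_1\geq 28$. Therefore
\[
\vol{P}=\frac{2(1+\delta_1+\delta_2)}{5!}=\frac{1+\delta_1+\delta_2}{60}\geq\frac{1+2\delta_1}{60}\geq\frac{57}{60}=\frac{19}{20},
\]
as required.

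There is no substantial obstacle here: the argument is a direct transcription of the four-dimensional case. The only points needing care are the identification of the mixed case with the single inequality $C<0$ on the constant term of the quadratic in $s=w^2$ (rather than a full discriminant analysis), and the use of $\delta_1\leq\delta_2$ in the correct direction when eliminating $\delta_2$ from $71\delta_1-9\delta_2>1689$; both are routine once the reduction to the biquadratic is set up.
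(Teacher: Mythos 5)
Your proof is correct and follows essentially the same route as the paper: reduce to the biquadratic~\eqref{eq:dim5_alpha}, identify the mixed case with the sign of the constant term $\tfrac18(1689-71\delta_1+9\delta_2)$, and combine with $\delta_1\leq\delta_2$ to get $\delta_1\geq 28$ and hence $\vol{P}\geq 19/20$. The only (immaterial) difference is that you use the strict inequality $1689-71\delta_1+9\delta_2<0$, thereby excluding the degenerate case of a double root at $-1/2$, whereas the paper allows equality; the integrality of $\delta_1$ gives $\delta_1\geq 28$ either way.
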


\begin{proof}
By considering the left-hand side of~\eqref{eq:dim5_alpha}, we see that there are two real roots and two roots with real part $-1/2$ if and only if $1689-71\delta_1+9\delta_2\leq 0$. Since $\delta_1\leq\delta_2$, we have that $0\geq 1689-71\delta_1+9\delta_2\geq 1689-62\delta_1$. Thus $\delta_1\geq\lceil 1689/62\rceil = 28$. Therefore $\vol{P}=2(1+\delta_1+\delta_2)/120\geq (1+2\delta_1)/60\geq 19/20$.
\end{proof}

\begin{prop}\label{prop:canonical_strip_5}
Let $P$ be a $5$-dimensional reflexive polytope, and suppose that there exists a root $-1/2+a+bi\in\C$ of $L_P$, where $a\neq 0$ and $b\neq 0$. Then $\abs{a}<3/2$ and the roots of $L_P$ satisfy~\eqref{eq:HS}.
\end{prop}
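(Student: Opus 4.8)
The plan is to mirror the proof of Proposition~\ref{prop:canonical_strip_4}, working with the quartic factor $f(z)=5!\,L_P(z)/(z+1/2)$ whose roots are the four roots of $L_P$ other than $-1/2$. Substituting $z=-1/2+\alpha$ into $f$ produces, up to a positive constant, the biquadratic relation~\eqref{eq:dim5_alpha}; so, writing
\[
A_0=2(1+\delta_1+\delta_2),\qquad B_0=5(23+7\delta_1-\delta_2),\qquad C_0=\tfrac18(1689-71\delta_1+9\delta_2),
\]
and $F(w)=A_0w^2+B_0w+C_0$, the quantity $w=\alpha^2$ is a root of $F$. The root under consideration is $z=-1/2+a+bi$, i.e.\ $\alpha=a+bi$; since $a,b\neq 0$ the number $\alpha^2=(a^2-b^2)+2abi$ is non-real, so $\Disc(F)<0$ and $F$ has a complex conjugate pair of roots $\beta,\overline{\beta}$, one of which is $\alpha^2$.

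The first key step is to extract $a$ \emph{exactly} rather than estimating it. From $\Re{\alpha^2}=a^2-b^2$ and $\abs{\alpha^2}=a^2+b^2$ we get $a^2=\tfrac12(\abs{\beta}+\Re{\beta})$, and Vieta's formulas give $\Re{\beta}=-B_0/(2A_0)$ and $\abs{\beta}=\sqrt{\beta\overline\beta}=\sqrt{C_0/A_0}$. Hence the desired bound $\abs a<3/2$, i.e.\ $a^2<9/4$, is equivalent to
\[
\sqrt{C_0/A_0}<\frac{9A_0+B_0}{2A_0}.
\]

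I expect the main obstacle to be precisely here, at the point where the four-dimensional argument breaks down. In Proposition~\ref{prop:canonical_strip_4} one can discard the term $\Re{\beta}$ and bound $a^2\le\abs\beta=\sqrt{C_0/A_0}$; in dimension five this is too lossy, because the large constant $1689$ in $C_0$ makes $\sqrt{C_0/A_0}>9/4$ already for small values such as $(\delta_1,\delta_2)=(1,7)$. Retaining $\Re{\beta}$ is therefore essential. Since $9A_0+B_0=133+53\delta_1+13\delta_2>0$, both sides of the displayed inequality are positive, so squaring is reversible and $\abs a<3/2$ becomes the polynomial inequality
\[
4A_0C_0<(9A_0+B_0)^2,\qquad\text{i.e.}\qquad (1+\delta_1+\delta_2)(1689-71\delta_1+9\delta_2)<(133+53\delta_1+13\delta_2)^2.
\]

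To finish I would expand the difference of the two sides; it equals $16000+12480\delta_1+1760\delta_2+2880\delta_1^2+1440\delta_1\delta_2+160\delta_2^2$, which has strictly positive coefficients and is therefore positive for all $\delta_1,\delta_2\ge 0$. This establishes $\abs a<3/2$. The half-strip condition~\eqref{eq:HS} then follows immediately: the roots of $L_P$ are $-1/2$ together with $-1/2\pm a\pm bi$, whose real parts are $-1/2$ and $-1/2\pm a$, all lying in $(-2,1)\subseteq[-5/2,3/2]$.
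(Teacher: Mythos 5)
Your proposal is correct, and its setup coincides with the paper's: both arguments factor out $z+1/2$, pass to the quadratic $F(w)=A_0w^2+B_0w+C_0$ in $w=\alpha^2$ coming from~\eqref{eq:dim5_alpha}, note that $a\neq 0$, $b\neq 0$ forces $\alpha^2$ to be non-real so that $\Disc(F)<0$, and identify $a^2=\tfrac{1}{2}\left(\abs{\beta}+\Re{\beta}\right)$ with $\abs{\beta}=\sqrt{C_0/A_0}$ and $\Re{\beta}=-B_0/(2A_0)$ by Vieta. Where you genuinely part ways is the finishing step. The paper verifies $\sqrt{(r+r\cos\theta)/2}<3/2$ by a case analysis~\eqref{condi2} on the region of $(\delta_1,\delta_2)$ where $\Disc(F)<0$ can hold, using the resulting bounds on $\delta_1$ in terms of $\delta_2$ together with monotonicity estimates for auxiliary functions $H_1$, $H_2$ (and, in its second case, discarding $\Re{\beta}$ and bounding $\sqrt{r}$ alone). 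You instead observe that, since $9A_0+B_0=133+53\delta_1+13\delta_2>0$, the bound $a^2<9/4$ is \emph{exactly equivalent} to $4A_0C_0<(9A_0+B_0)^2$, i.e.\ to
\[
(1+\delta_1+\delta_2)(1689-71\delta_1+9\delta_2)<(133+53\delta_1+13\delta_2)^2,
\]
and the difference of the two sides expands to $16000+12480\delta_1+1760\delta_2+2880\delta_1^2+1440\delta_1\delta_2+160\delta_2^2$; I have checked this expansion and it is correct, so positivity is immediate for $\delta_1,\delta_2\geq 0$. Your route buys quite a lot: it is shorter, requires no case analysis and no calculus-style monotonicity arguments, and shows the inequality holds unconditionally in the positive quadrant, the hypothesis $\Disc(F)<0$ being needed only to know the two roots of $F$ are complex conjugates (so that Vieta yields $\abs{\beta}$). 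The concluding step --- the five roots are $-1/2$ and $-1/2\pm a\pm bi$, so all real parts lie in $(-2,1)\subset[-5/2,3/2]$ --- matches the paper's logic and correctly delivers~\eqref{eq:HS}.
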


\begin{proof}
We consider $\alpha$ satisfying equation~\eqref{eq:dim5_alpha}; we need to prove that $\alpha$ satisfies $-3/2<\Re{\alpha}<3/2$. Set
\[
F(z):=2(1+\delta_1+\delta_2)z^2+5(23+7\delta_1-\delta_2)z+\frac{1689}{8}-\frac{71}{8}\delta_1+\frac{9}{8}\delta_2,
\]
and consider the roots of $F$. By the hypothesis we have that $\Disc(F)<0$. We shall show that
\[
\sqrt{\frac{r+r \cos \theta}{2}}<\frac{3}{2},
\]
where $\beta=re^{\theta i}$ is a root of $F$, $r>0$, $0<\theta<\pi$. Notice that
\[
r=\sqrt{\beta \overline{\beta}}=\frac{1}{4}\sqrt{\frac{1689-71\delta_1+9\delta_2}{1+\delta_1+\delta_2}},
\qquad\text{ and }\qquad
r\cos\theta=\frac{\beta+\overline{\beta}}{2}=-\frac{1}{4}\cdot\frac{5(23+7\delta_1-\delta_2)}{1+\delta_1+\delta_2}.
\]
Moreover, one has that
\[
\Disc(F)=\frac{16}{41}\left(2(41\delta_1+96)^2+2(41\delta_2-85)^2-41(\delta_1+9\delta_2-9)^2\right).
\]
Set $h(\delta_1):=\Disc(F)/16$ to be a polynomial in $\delta_1$. We require that $h(\delta_1)<0$. The range of values for $\delta_1$ such that $h(\delta_1)<0$ is:
\[
\frac{3\delta_2-67-20\sqrt{3\delta_2-5}}{27}<\delta_1<\frac{3\delta_2-67+20\sqrt{3\delta_2-5}}{27}.
\]
Since $\delta_1\geq 1$, we have that $(3\delta_2-67+20\sqrt{3\delta_2-5})/27>1$. Thus $\delta_2>98-20\sqrt{21}$. Hence the condition $\Disc(F)<0$ is equivalent to:
\begin{align}
\nonumber
&\phantom{m}\delta_2>98-20\sqrt{21},\qquad\text{ and}\\
\label{condi2}
&\begin{cases}
1\leq\delta_1<\dfrac{3\delta_2-67+20\sqrt{3\delta_2-5}}{27},
&\text{ when }\delta_2\leq 98+20\sqrt{21};\vspace{0.5em}\\
\dfrac{3\delta_2-67-20\sqrt{3\delta_2-5}}{27}<\delta_1<\dfrac{3\delta_2-67+20\sqrt{3\delta_2-5}}{27},
&\text{ when }\delta_2>98+20\sqrt{21}.
\end{cases}
\end{align}

\noindent
When the first condition of~\eqref{condi2} is satisfied, we have:
\begin{align*}
\sqrt{\frac{1689-71\delta_1+9\delta_2}{1+\delta_1+\delta_2}}-\frac{5(23+7\delta_1-\delta_2)}{1+\delta_1+\delta_2}
&=\sqrt{-71+\frac{80(\delta_2+22)}{1+\delta_1+\delta_2}}+\frac{40(\delta_2-2)}{1+\delta_1+\delta_2}-35\\
&\leq\sqrt{-71+\frac{80(\delta_2+22)}{\delta_2+2}}+\frac{40(\delta_2-2)}{\delta_2+2}-35\qquad(=:H_1(\delta_2))\\
&\leq\sqrt{-71+\frac{80(98+22)}{98+2}}+\frac{40(98-2)}{98+2}-35\\
&\qquad\qquad\text{(since }\frac{dH_1(\delta_2)}{d\delta_2}>0\text{ when }\delta_2<98\\
&\qquad\qquad\qquad\text{ and }\frac{dH_1(\delta_2)}{d\delta_2}<0\text{ when }\delta_2>98\text{)}\\
&<18
\end{align*}
We obtain:
\[
\sqrt{\frac{r+r \cos \theta}{2}}<\sqrt{\frac{18}{4}\cdot\frac{1}{2}}=\frac{3}{2}.
\]

\noindent
When the second condition of~\eqref{condi2} is satisfied, we have:
\begin{align*}
\frac{1689-71\delta_1+9\delta_2}{1+\delta_1+\delta_2}
&=-71+\frac{80(\delta_2+22)}{1+\delta_1+\delta_2}\\
&<-71+\frac{80(\delta_2+22)}{\frac{3\delta_2-67-20 \sqrt{3\delta_2-5}}{27}+\delta_2+2}\\
&=-71+\frac{2160(\delta_2+22)}{30\delta_2-13-20\sqrt{3\delta_2-5}}\qquad(=:H_2(\delta_2))\\
&<-71+\frac{2160(98+20\sqrt{21}+22)}{30(98+20\sqrt{21})-13-20\sqrt{3(98+20\sqrt{21})-5}}\\
&\qquad\qquad\text{(since }\frac{dH_2(\delta_2)}{d\delta_2}<0\text{ when }\delta_2\geq 98+20\sqrt{21}\text{)}\\
&< 81
\end{align*}
We conclude that
\[
\sqrt{\frac{r+r\cos\theta}{2}}\leq\sqrt{r}=\frac{1}{2}\left(\frac{1689-71\delta_1+9\delta_2}{1+\delta_1+\delta_2}\right)^{\frac{1}{4}}<\frac{3}{2},
\]
as required.
\end{proof}

\section{Dimensions six and seven}\label{sec:dim67}
If $P$ is a $(2k+1)$-dimensional reflexive polytope then $-1/2$ is a root of $L_P$. This we shall consider both $6$- and $7$-dimensional reflexive polytopes together. We shall require the following lemma; this is simply an application of Descartes' rule of signs.

\begin{lemma}\label{lem:cubic}
Let $A,B,C,D\in\R$ be real numbers, $A>0$. All solutions of the equation
\[
Az^3+Bz^2+Cz+D=0
\]
are positive real numbers if and only if $B<0$, $C>0$, $D\leq 0$, and the discriminant $\Disc\geq 0$.
\end{lemma}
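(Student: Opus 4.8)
The plan is to prove Lemma~\ref{lem:cubic} by Descartes' rule of signs, exactly as the statement's preamble suggests, treating the ``only if'' and ``if'' directions separately. The key observation is that a \emph{monic} real cubic with all roots positive real must factor as $(z-r_1)(z-r_2)(z-r_3)$ with $r_1,r_2,r_3>0$; expanding this gives coefficient signs $+,-,+,-$ for $z^3,z^2,z^1,z^0$ respectively. After dividing the given equation by $A>0$ this immediately yields $B<0$, $C>0$, $D<0$ (strictly, when the roots are strictly positive), together with $\Disc\ge 0$ since the roots are real. I would first record that a nonzero root $z=0$ is allowed in the borderline $D=0$ case, which is why the statement uses $D\le 0$ rather than $D<0$, and note that ``positive real'' in the statement should be read as ``non-negative real'' in the degenerate boundary situation (this is the analogue of the $B=C=0$ escape clause in Lemma~\ref{lem:quadratic}).

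For the forward direction I would argue: assume all three roots are positive reals $r_1,r_2,r_3\ge 0$. Then by Vieta's formulas applied to $z^3+(B/A)z^2+(C/A)z+D/A$, we have $B/A=-(r_1+r_2+r_3)\le 0$, $C/A=r_1r_2+r_1r_3+r_2r_3\ge 0$, and $D/A=-r_1r_2r_3\le 0$. Since $A>0$, the signs $B\le 0$, $C\ge 0$, $D\le 0$ follow, and $\Disc\ge 0$ because all roots are real (the discriminant of a real cubic is non-negative precisely when all three roots are real). The strict inequalities $B<0$, $C>0$ in the statement hold once we exclude the trivial case where roots coincide at the origin; I would handle the degenerate configurations (some $r_i=0$) as boundary cases just as Lemma~\ref{lem:quadratic} does.

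For the converse I would invoke Descartes' rule of signs directly. Given $A>0$, $B<0$, $C>0$, $D\le 0$, the sign sequence of the coefficients of $Az^3+Bz^2+Cz+D$ is $(+,-,+,-)$ (or $(+,-,+,0)$ when $D=0$), which exhibits exactly three sign changes, so there are at most three, and hence (by parity) either three or one positive real roots. The condition $\Disc\ge 0$ guarantees all three roots are real rather than one real and a complex-conjugate pair. Combining these two facts forces all three roots to be real and, given the sign pattern rules out non-positive real roots via the companion substitution $z\mapsto -z$ (which produces the all-positive sign sequence $(+,+,+,+)$, hence no negative real roots), every root is a positive (or, in the $D=0$ case, non-negative) real number.

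The main obstacle is bookkeeping around the boundary cases, mirroring the subtlety already present in Lemma~\ref{lem:quadratic}: the statement asserts strict inequalities $B<0$ and $C>0$ but only $D\le 0$ and $\Disc\ge 0$, so I must verify these strict-versus-weak distinctions are exactly what the root configuration forces, and confirm that no genuine positive-root cubic is excluded. I expect the cleanest route is to reduce to the monic case and read off Vieta's relations; Descartes' rule then supplies the converse almost mechanically, with the only real care needed in confirming that the discriminant condition correctly separates the three-real-roots case from the one-real-root case. Since the paper explicitly flags this as ``simply an application of Descartes' rule of signs,'' I would keep the argument short and defer the routine sign-chasing rather than expand it in full.
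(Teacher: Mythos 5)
Your proposal is correct and takes essentially the same route as the paper: the paper offers no written proof of this lemma beyond the remark that it ``is simply an application of Descartes' rule of signs,'' and your Vieta-plus-Descartes-plus-discriminant argument is precisely that intended argument. Your additional observation that $D=0$ forces a root at the origin---so that the stated conditions literally yield only non-negative rather than positive roots in that boundary case---correctly identifies a genuine imprecision in the paper's statement, and reading ``positive'' as ``non-negative'' there is the right repair.
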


\begin{lemma}\label{lem:cubic2}
Let $A,B,C,D\in\R$ be real numbers, $A>0$. All solutions of the equation
\begin{equation}\label{eq:dim6pol2}
Az^3+Bz^2+Cz+D=0
\end{equation}
are non-negative real numbers if and only if one of the following conditions is satisfied:
\begin{enumerate}
\item\label{equal}
$B=C=D=0$;
\item\label{equal2}
$B<0$ and $C=D=0$; or
\item\label{equal3}
$\Disc\geq 0$, $B<0$, $C>0$, and $D<0$.
\end{enumerate}
\end{lemma}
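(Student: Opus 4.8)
The plan is to characterise when the cubic $Az^3+Bz^2+Cz+D=0$ (with $A>0$) has all roots real and non-negative, by splitting into cases according to how many of the roots equal zero. This is a natural refinement of Lemma~\ref{lem:cubic}, which handles the strictly positive case, and the proof will reduce to that lemma together with elementary factoring. First I would observe that $0$ is a root of~\eqref{eq:dim6pol2} if and only if $D=0$. So the whole problem divides cleanly: either $D\neq 0$ (no root is zero, so ``non-negative'' means ``positive''), or $D=0$ (we may factor out $z$ and apply Lemma~\ref{lem:quadratic} to the remaining quadratic).

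In the case $D\neq 0$, all roots are non-negative if and only if all roots are strictly positive, and Lemma~\ref{lem:cubic} tells us this is equivalent to $B<0$, $C>0$, $D\leq 0$, and $\Disc\geq 0$. Since we are assuming $D\neq 0$, the condition $D\leq 0$ sharpens to $D<0$, giving exactly condition~\eqref{equal3}. In the case $D=0$, I would write $Az^3+Bz^2+Cz=z(Az^2+Bz+C)$, so all roots are non-negative precisely when the quadratic $Az^2+Bz+C=0$ has only non-negative real roots. Now I invoke Lemma~\ref{lem:quadratic}: this happens if and only if either $B=C=0$ (giving condition~\eqref{equal}, since we already have $D=0$), or $B<0$, $C\geq 0$, and the quadratic discriminant is non-negative. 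In this latter subcase I would further split on whether $C=0$ or $C>0$: when $C=0$ the quadratic discriminant $B^2\geq 0$ is automatic, yielding condition~\eqref{equal2}; when $C>0$ the condition that the quadratic discriminant be non-negative must be matched against the cubic discriminant $\Disc$.

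The step I expect to require the most care is reconciling the two notions of discriminant in the $D=0$, $C>0$ branch. I would verify that when $D=0$ the cubic discriminant $\Disc$ factors through the quadratic discriminant $B^2-4AC$ of the cofactor $Az^2+Bz+C$, so that the condition ``quadratic discriminant $\geq 0$'' from Lemma~\ref{lem:quadratic} agrees with ``$\Disc\geq 0$'' from condition~\eqref{equal3}; indeed the cubic discriminant of $z(Az^2+Bz+C)$ vanishes exactly when the quadratic has a repeated root or $C=0$, and is otherwise a non-negative multiple of $B^2-4AC$ times a positive factor. Once this compatibility is confirmed, the three listed conditions are seen to exhaust exactly the non-negative-root situation, with~\eqref{equal} and~\eqref{equal2} being the degenerate boundary cases that condition~\eqref{equal3} (which requires $D<0$) deliberately excludes. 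Finally I would note that the three conditions are mutually exclusive, matching the trichotomy on the pair $(C,D)$, so no root configuration is double-counted.
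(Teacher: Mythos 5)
Your strategy --- split on whether $D=0$, factor out $z$, and reduce to Lemma~\ref{lem:quadratic} and Lemma~\ref{lem:cubic} --- is genuinely different from the paper's proof: there, the ``only if'' direction is dismissed as immediate, and the ``if'' direction is handled by perturbing the coefficients that vanish, applying Rouch\'e's theorem and Lemma~\ref{lem:cubic} to the perturbed cubic, and letting the perturbation tend to zero, so that the roots of the original cubic are limits of positive numbers. Your route is more elementary, and your technical claims are correct as far as they go; in particular, when $D=0$ the cubic discriminant is $\Disc=C^2(B^2-4AC)$, so for $C\neq 0$ it has the same sign as the discriminant of the quadratic cofactor, exactly as you say.

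Nevertheless the proof does not close, and it fails at precisely the step you flagged as delicate. In the subcase $D=0$, $B<0$, $C>0$, $B^2-4AC\geq 0$, Lemma~\ref{lem:quadratic} tells you that all roots of the cubic are non-negative real (the quadratic cofactor contributes two positive roots, and $z=0$ is the third). You assert that this situation is ``matched against'' condition~\eqref{equal3}; but condition~\eqref{equal3} requires $D<0$, which is violated here, while conditions~\eqref{equal} and~\eqref{equal2} require $C=0$. So this configuration satisfies none of the three listed conditions, and your concluding claim that the three conditions ``exhaust exactly the non-negative-root situation'' is false. Concretely, $z(z-1)(z-2)=z^3-3z^2+2z$ has $A=1$, $B=-3<0$, $C=2>0$, $D=0$, $\Disc=4>0$, and all roots non-negative, yet it satisfies none of \eqref{equal},~\eqref{equal2},~\eqref{equal3}. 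What your method has actually uncovered is that the lemma as stated is incorrect: the ``only if'' direction fails, a defect which the paper's own proof conceals inside the phrase ``it is easy to see''. The statement becomes true --- and your argument then goes through verbatim --- if the hypothesis $D<0$ in condition~\eqref{equal3} is relaxed to $D\leq 0$: the case $D<0$ is Lemma~\ref{lem:cubic}, and the case $D=0$ is exactly the configuration your factorization identifies, reconciled with the cubic discriminant via $\Disc=C^2(B^2-4AC)$.
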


\begin{proof}
First suppose that all roots of~\eqref{eq:dim6pol2} are non-negative real numbers. Then it is easy to see that one of the conditions~\eqref{equal},~\eqref{equal2}, or~\eqref{equal3} is satisfied. Conversely, suppose that one of~\eqref{equal},~\eqref{equal2}, or~\eqref{equal3} holds. Let $B'>0$, $C'<0$, and $D'>0$ denote a small perturbation of, respectively, the coefficients  $B\geq 0$, $C\leq 0$, and $D\geq 0$. Then it follows from Rouche's Theorem that the roots of 
\begin{align}\label{eq:dim6pol3}
A\gamma^3+B'\gamma^2+C'\gamma+D'=0
\end{align}
are very close to the roots of~\eqref{eq:dim6pol2}, and Lemma \ref{lem:cubic} implies that all roots of~\eqref{eq:dim6pol3} are positive. This perturbation method give us a sequence of roots of a polynomial sequence, and the limit of this sequence of roots is the roots of~\eqref{eq:dim6pol2}.
\end{proof}

In~\S\S\ref{sec:dim6_7_CL}--\ref{sec:dim6_7_real} we will make use of the following two polynomials:
\begin{align*}
f_6(z)&:=(2+2\delta_1+2\delta_2+\delta_3)z^3-\frac{5}{4}(202+82\delta_1+10\delta_2-7\delta_3)z^2\\
&\hspace{3em}+\frac{1}{16}(24278+1478\delta_1-682\delta_2+259\delta_3)z-\frac{45}{64}(462-42\delta_1+14\delta_2-5\delta_3)\\
f_7(z)&:=(1+\delta_1+\delta_2+\delta_3)z^3-\frac{7}{4}(139+67\delta_1+19\delta_2-5\delta_3)z^2\\
&\hspace{3em}+\frac{7}{16}(8197+1237\delta_1-203\delta_2+37\delta_3)z-\frac{3}{64}(88069-3043\delta_1+429\delta_2-75\delta_3)
\end{align*}

\subsection{Six- and seven-dimensional CL-polytopes}\label{sec:dim6_7_CL}
\begin{thm}\label{thm:dim6_conditions}
Let $P$ be a $6$-dimensional reflexive polytope with $\delta$-vector $(1,\delta_1,\delta_2,\delta_3,\delta_2,\delta_1,1)$. Then $P$ is a  CL-polytope if and only if one of the following holds:
\begin{enumerate}
\item\label{thm:dim6_conditions_1}
$\delta_1=722$, $\delta_2=10543$, and $\delta_3=23548$;
\item\label{thm:dim6_conditions_2} 
$994\delta_1=27\delta_3+81872$, $497\delta_2=218\delta_3+106407$, and $\delta_3<23548$; or
\item\label{thm:dim6_conditions_3}
$202+82\delta_1+10\delta_2>7\delta_3$, $24278+1478\delta_1+259\delta_3>682\delta_2$, $462+14\delta_2>42\delta_1+5\delta_3$, and $\Disc\left(f_6\right)\geq 0$.
\end{enumerate}
In particular, if $P$ is a CL-polytope then $\delta_1\leq 722$, $\delta_2\leq 10543$, and $\delta_3\leq 23548$.
\end{thm}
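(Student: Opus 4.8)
The plan is to reduce the degree-six Ehrhart polynomial to the cubic $f_6$ and then apply the root-location criterion of Lemma~\ref{lem:cubic2}. Beginning from Lemma~\ref{lem:main_lemma}, I would write $L_P(z)$ explicitly in terms of $\delta_1,\delta_2,\delta_3$, using $\delta_0=\delta_6=1$, $\delta_1=\delta_5$, and $\delta_2=\delta_4$. Since $P$ is reflexive and $\dim{P}=6$ is even, Macdonald reciprocity gives $L_P(-1/2+s)=L_P(-1/2-s)$, so $L_P(-1/2+s)$ is an even polynomial in $s$ of degree six. Substituting $s=\beta i$ with $\beta\in\R$ then produces a real polynomial in $w:=\beta^2$ of degree three; after clearing denominators and normalising the leading coefficient to be positive, this is exactly $f_6(w)$. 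This is the same device used in the four- and five-dimensional cases, now yielding a cubic rather than a quadratic in $w$.

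The key observation is that $P$ is a CL-polytope precisely when all three roots $w$ of $f_6$ are non-negative real numbers. Indeed, a positive root $w$ contributes the conjugate pair $-1/2\pm i\sqrt{w}$ on the line $\Re{z}=-1/2$; the root $w=0$ contributes $z=-1/2$; whereas a negative or non-real value of $w$ produces a root of $L_P$ with $\Re{z}\neq-1/2$. Hence the desired characterisation of CL-polytopes coincides with the characterisation of when the cubic $f_6$ has all roots non-negative real, which is furnished by Lemma~\ref{lem:cubic2}.

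It then remains to translate the three alternatives of Lemma~\ref{lem:cubic2} into the stated conditions by reading off the coefficients $A,B,C,D$ of $f_6$. With $A=2+2\delta_1+2\delta_2+\delta_3>0$, one checks directly that $B<0$, $C>0$, and $D<0$ are equivalent, respectively, to $202+82\delta_1+10\delta_2>7\delta_3$, to $24278+1478\delta_1+259\delta_3>682\delta_2$, and to $462+14\delta_2>42\delta_1+5\delta_3$; together with $\Disc(f_6)\geq 0$ this is condition~\eqref{equal3} of Lemma~\ref{lem:cubic2}, giving case~\eqref{thm:dim6_conditions_3}. Condition~\eqref{equal}, namely $B=C=D=0$, is a system of three linear equations whose unique solution is $(\delta_1,\delta_2,\delta_3)=(722,10543,23548)$, giving case~\eqref{thm:dim6_conditions_1}. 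For condition~\eqref{equal2}, solving the pair $C=D=0$ by elimination yields the line $994\delta_1=27\delta_3+81872$ and $497\delta_2=218\delta_3+106407$; substituting this parametrisation into $B<0$ collapses to $\delta_3<23548$, which is case~\eqref{thm:dim6_conditions_2}.

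Finally, the bounds $\delta_1\leq722$, $\delta_2\leq10543$, $\delta_3\leq23548$ follow by combining the cases: cases~\eqref{thm:dim6_conditions_1} and~\eqref{thm:dim6_conditions_2} attain or approach these values, while in case~\eqref{thm:dim6_conditions_3} the three linear inequalities confine $(\delta_1,\delta_2,\delta_3)$ below them, exactly as the first two inequalities did in the four- and five-dimensional theorems. I expect the main obstacle to be purely computational: confirming that the substitution reproduces $f_6$ with the stated coefficients, solving the two linear systems in cases~\eqref{thm:dim6_conditions_1} and~\eqref{thm:dim6_conditions_2}, and carrying out the elimination that turns $B<0$ into the clean bound $\delta_3<23548$. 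The conceptual content, namely the reduction to a cubic and the appeal to Lemma~\ref{lem:cubic2}, mirrors the lower-dimensional arguments exactly.
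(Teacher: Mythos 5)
Your proposal is correct and follows essentially the same route as the paper: substitute $z=-1/2+\beta i$ into $L_P$ (obtained from Lemma~\ref{lem:main_lemma}), reduce to the cubic $f_6$ in $\beta^2$, apply Lemma~\ref{lem:cubic2}, and translate its three alternatives into the stated cases, with the bounds in case~(\ref{thm:dim6_conditions_3}) coming from a positive linear combination of the three linear inequalities. Your additional details (the evenness of $L_P(-1/2+s)$ via reciprocity, the explicit solution of the linear systems $B=C=D=0$ and $C=D=0$, and the reduction of $B<0$ to $\delta_3<23548$) all check out numerically and only make explicit what the paper leaves implicit.
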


\begin{proof}
From Lemma~\ref{lem:main_lemma} we obtain
\begin{align}\label{eq:dim6}
L_P(z)={z+6 \choose 6} + {z \choose 6}+\delta_1\left({z+5 \choose 6} + {z+1 \choose 6} \right)+
\delta_2\left({z+4 \choose 6} + {z+2 \choose 6} \right)+\delta_3{z+3 \choose 6}.
\end{align}
Substituting $z=-1/2+\beta i$ in $L_P(z)$ and multiplying by $-6!$ gives:
\begin{align*}
G(\beta):=(2+2\delta_1+2\delta_2+&\delta_3)\beta^6-\frac{5}{4}(202+82\delta_1+10\delta_2-7\delta_3)\beta^4\\
&+\frac{1}{16}(24278+1478\delta_1-682\delta_2+259\delta_3)\beta^2-\frac{45}{64}(462-42\delta_1+14\delta_2-5\delta_3).
\end{align*}
This is a cubic in $\beta^2$, with $G(\beta)=f_6(\beta^2)$, and $G(\beta)=0$ if and only if $L_P(-1/2+\beta i) = 0$. The three cases follow from applying Lemma~\ref{lem:cubic2} to $f_6$. The bounds $\delta_1\leq 722$, $\delta_2\leq 10543$, and $\delta_3\leq 23548$ are obvious in cases~\eqref{thm:dim6_conditions_1} and~\eqref{thm:dim6_conditions_2}; in case~\eqref{thm:dim6_conditions_3} it follows through a positive linear combination of the first three inequalities.
\end{proof}

\begin{thm}\label{thm:dim7_conditions}
Let $P$ be a $7$-dimensional reflexive polytope with $\delta$-vector $(1,\delta_1,\delta_2,\delta_3,\delta_3,\delta_2,\delta_1,1)$. Then $P$ is a CL-polytope if and only if either
\begin{enumerate}
\item
$\delta_1=2179$, $\delta_2=60657$, and $\delta_3=259723$;
\item
$10882\delta_1=81\delta_3+2674315$,
$10882\delta_2=2477\delta_3+16735603$, and
$\delta_3<259723$; or
\item\label{thm:dim7_conditions_3}
$139+67\delta_1+19\delta_2>5\delta_3$, $8197+1237\delta_1+37\delta_3>203\delta_2$, $88069+429\delta_2>3043\delta_1+75\delta_3$, and $\Disc\left(f_7\right)\geq 0$.
\end{enumerate}
In particular, if $P$ is a CL-polytope then $\delta_1\leq 2179$, $\delta_2\leq 60657$, and $\delta_3\leq 259723$.
\end{thm}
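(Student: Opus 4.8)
The plan is to follow exactly the template established for the six-dimensional case in Theorem~\ref{thm:dim6_conditions}, replacing $f_6$ by $f_7$ throughout. First I would invoke Lemma~\ref{lem:main_lemma} to write the Ehrhart polynomial explicitly in terms of the palindromic $\delta$-vector $(1,\delta_1,\delta_2,\delta_3,\delta_3,\delta_2,\delta_1,1)$, namely
\[
L_P(z)={z+7\choose 7}+{z\choose 7}+\delta_1\left({z+6\choose 7}+{z+1\choose 7}\right)+\delta_2\left({z+5\choose 7}+{z+2\choose 7}\right)+\delta_3\left({z+4\choose 7}+{z+3\choose 7}\right).
\]
Since $P$ is $7$-dimensional (odd), Macdonald reciprocity forces $-1/2$ to be a root, so $z+1/2$ divides $L_P(z)$; I would set $f(z):=7!\,L_P(z)/(z+1/2)$, a degree-$6$ polynomial in $z$.

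Next I would substitute $z=-1/2+\beta i$ into $f$. Because the roots of $f$ are symmetric about $L_1$ and (by conjugation) about $L_2$, the substitution produces a polynomial in $\beta$ that is \emph{even}: only even powers of $\beta$ survive, yielding a cubic $G(\beta)$ in the variable $\beta^2$. The point is that $P$ is a CL-polytope precisely when every root of $f$ lies on the line $\Re{z}=-1/2$, i.e.\ when every root of $G$ in $\beta$ is \emph{real}, which is equivalent to all three roots of the cubic $G(\beta)=f_7(\beta^2)$ (viewed as a cubic in $\beta^2$) being \emph{non-negative real numbers}. After clearing denominators and scaling, this cubic must coincide (up to a positive constant) with the displayed $f_7(z)$; the coefficients $-\tfrac{7}{4}(139+67\delta_1+19\delta_2-5\delta_3)$, $\tfrac{7}{16}(8197+1237\delta_1-203\delta_2+37\delta_3)$, and $-\tfrac{3}{64}(88069-3043\delta_1+429\delta_2-75\delta_3)$ encode the sign conditions $B<0$, $C>0$, $D\le 0$ of Lemma~\ref{lem:cubic2}. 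Translating those sign conditions back into inequalities on $(\delta_1,\delta_2,\delta_3)$ gives exactly the three stated inequalities of case~\eqref{thm:dim7_conditions_3}, while the degenerate possibilities $C=D=0$ and $B=C=D=0$ of Lemma~\ref{lem:cubic2} produce the equality loci of cases~(i) and~(ii). I would verify directly that case~(i) is the $\delta$-vector of the cube $\{-1,1\}^7$ (consistency check) and that case~(ii) parametrises the boundary tangent line.

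For the final sentence—the bounds $\delta_1\le 2179$, $\delta_2\le 60657$, $\delta_3\le 259723$—I would argue as in dimension six: the bounds are immediate in the equality cases~(i) and~(ii), and in case~\eqref{thm:dim7_conditions_3} they follow by exhibiting an explicit positive linear combination of the first three inequalities that isolates each $\delta_i$ and bounds it above by the cube value. The main obstacle I anticipate is purely computational rather than conceptual: obtaining the correct coefficients of $f_7$ from the binomial expansion after dividing out $(z+1/2)$ is an error-prone symbolic calculation, and then expressing the discriminant condition $\Disc(f_7)\ge 0$ in usable form is heavy. I would delegate these expansions to a computer algebra system and focus the written proof on the structural steps—the reduction to a cubic in $\beta^2$, the application of Lemma~\ref{lem:cubic2}, and the positive-combination argument for the bounds—since these are where the genuine content lies and where the parallel with Theorem~\ref{thm:dim6_conditions} makes the argument transparent.
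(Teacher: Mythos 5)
Your proposal is correct and follows essentially the same route as the paper's own (much terser) proof: write $L_P$ via Lemma~\ref{lem:main_lemma}, divide out the forced root $-1/2$, substitute $z=-1/2+\beta i$ to reduce to the cubic $f_7(\beta^2)$, apply Lemma~\ref{lem:cubic2} to obtain the three cases, and deduce the bounds $\delta_1\leq 2179$, $\delta_2\leq 60657$, $\delta_3\leq 259723$ from a positive linear combination of the three strict inequalities in case~(\ref{thm:dim7_conditions_3}). The only discrepancies are trivial: the paper normalises $f(z):=-7!\,L_P(z)/2(z+1/2)$ precisely so that the leading coefficient of the resulting cubic is positive, whereas your $7!\,L_P(z)/(z+1/2)$ yields $-2f_7(\beta^2)$ (a \emph{negative} multiple, so the sign must be flipped before invoking Lemma~\ref{lem:cubic2}), and case~(\ref{thm:dim7_conditions_3}) requires $D<0$ strictly rather than the $D\leq 0$ you wrote, the $D=0$ possibilities being exactly what splits off into the degenerate cases.
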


\begin{proof}
From Lemma~\ref{lem:main_lemma}, we obtain
\begin{align*}
L_P(z)={z+7\choose 7}+&{z\choose 7}+\delta_1\left({z+6\choose 7}+{z+1\choose 7}\right)+\\
&\delta_2\left({z+5\choose 7} + {z+2\choose 7}\right)+\delta_3\left({z+4\choose 7}+{z+3\choose 7} \right).
\end{align*}
Then $L_P(z)$ can be divided by $z+1/2$ (because $-1/2$ is a root). Set $f(z):=-7!L_P(z)/2(z+1/2)$. The result also follows from Lemma~\ref{lem:cubic2} by considering $f(-1/2+\beta i)=f_7(\beta^2)$. Again, in case~\eqref{thm:dim7_conditions_3} the bounds $\delta_1\leq 2179$, $\delta_2\leq 60657$, and $\delta_3\leq 259723$ can be obtained from a positive linear combination of the first three inequalities.
\end{proof}

\begin{cor}
Let $P$ be a $6$- or $7$-dimensional CL-polytope. Then, respectively, $\abs{P \cap \Z^6} \leq 3^6$ or $\abs{P \cap \Z^7} \leq 3^7$.
\end{cor}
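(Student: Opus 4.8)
The plan is to reduce the corollary to the explicit bounds on the $\delta$-vectors already established in Theorems~\ref{thm:dim6_conditions} and~\ref{thm:dim7_conditions}, using only the relationship between $\abs{P\cap\Z^d}$ and the first coefficient $\delta_1$ of the $\delta$-vector. First I would recall from Ehrhart's results (item~\eqref{item:delta_1} in the introduction) that $\delta_1=\abs{P\cap\Z^d}-d-1$, so that $\abs{P\cap\Z^d}=\delta_1+d+1$. This identity transports any upper bound on $\delta_1$ directly into a bound on the number of lattice points.

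Next I would invoke the ``In particular'' conclusions of the two preceding theorems. For a $6$-dimensional CL-polytope, Theorem~\ref{thm:dim6_conditions} gives $\delta_1\leq 722$, whence $\abs{P\cap\Z^6}=\delta_1+7\leq 729=3^6$. Similarly, for a $7$-dimensional CL-polytope, Theorem~\ref{thm:dim7_conditions} gives $\delta_1\leq 2179$, whence $\abs{P\cap\Z^7}=\delta_1+8\leq 2187=3^7$. This establishes both inequalities, so the proof is essentially a two-line calculation in each dimension.

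The main point to verify, rather than a genuine obstacle, is that the numerical bounds align exactly with the powers of three: one must check that $722+7=729$ and $2179+8=2187$, which indeed hold. Since the bounds on $\delta_1$ in the theorems are attained precisely by the $\delta$-vectors of the cubes $\{-1,1\}^6$ and $\{-1,1\}^7$ (the extremal cases~\eqref{thm:dim6_conditions_1} of each theorem), these inequalities are sharp, which is consistent with the cube attaining equality $\abs{P\cap\Z^d}=3^d$ as remarked after Corollary~\ref{cor:cube}. No delicate estimation is required; the work has already been done in extracting the bound $\delta_1\leq 722$ (respectively $\delta_1\leq 2179$) from the positive linear combination of inequalities in case~\eqref{thm:dim6_conditions_3} (respectively~\eqref{thm:dim7_conditions_3}), as noted in the proofs of those theorems.
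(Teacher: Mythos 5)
Your proof is correct and is exactly the argument the paper intends: the corollary follows immediately from the bounds $\delta_1\leq 722$ and $\delta_1\leq 2179$ in Theorems~\ref{thm:dim6_conditions} and~\ref{thm:dim7_conditions} together with $\delta_1=\abs{P\cap\Z^d}-d-1$, giving $722+7=3^6$ and $2179+8=3^7$. The paper states the corollary without proof precisely because it is this two-line deduction.
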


\subsection{Six- and seven-dimensional real reflexive polytopes}\label{sec:dim6_7_real}
By making the substitution $z=-1/2+\alpha$ in $L_P(z)$, we readily obtain the following two theorems:

\begin{thm}\label{thm:dim6_real_conditions}
Let $P$ be a $6$-dimensional reflexive polytope with $\delta$-vector $(1,\delta_1,\delta_2,\delta_3,\delta_2,\delta_1,1)$. Then $P$ is real if and only if one of the following holds:
\begin{enumerate}
\item
$\delta_1=722$, $\delta_2=10543$, and $\delta_3=23548$;
\item 
$994\delta_1=27\delta_3+81872$, $497\delta_2=218\delta_3+106407$, and $\delta_3>23548$; or
\item
$202+82\delta_1+10\delta_2>7\delta_3$, $24278+1478\delta_1+259\delta_3>682\delta_2$, $462+14\delta_2<42\delta_1+5\delta_3$, and $\Disc\left(f_6\right)\geq 0$.
\end{enumerate}
\end{thm}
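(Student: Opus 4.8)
The plan is to mirror exactly the structure of the CL-polytope characterisation in Theorem~\ref{thm:dim6_conditions}, but to replace the analysis of the even polynomial $G(\beta)=f_6(\beta^2)$ (which encodes when all roots lie on the line $\Re{z}=-1/2$) with an analysis of the polynomial obtained by the real substitution $z=-1/2+\alpha$. Concretely, I would start from the Ehrhart polynomial~\eqref{eq:dim6} and substitute $z=-1/2+\alpha$ with $\alpha\in\R$. Because $P$ is a $6$-dimensional reflexive polytope its $\delta$-vector is palindromic, so Macdonald reciprocity forces the roots to be symmetric about $\Re{z}=-1/2$; hence after this substitution the resulting degree-$6$ polynomial in $\alpha$ contains only even powers of $\alpha$, and it is precisely the expression $f_6(\alpha^2)$ evaluated with $\alpha^2$ in place of $\beta^2$, except that the sign of the middle (degree-$4$ in $\alpha$, i.e.\ linear in $\alpha^2$) term is reversed relative to the imaginary substitution. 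This is the key structural observation: the substitution $z=-1/2\pm\alpha$ for the real case and $z=-1/2\pm\beta i$ for the CL case produce the \emph{same} cubic in the square of the shift, up to the sign flip in the coefficient that came from $(\beta i)^2=-\beta^2$ versus $\alpha^2$.

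Next I would phrase ``$P$ is real'' as the statement that all six roots have $\Im{z}=0$, equivalently that all three roots of the cubic in $\alpha^2$ are \emph{non-negative real numbers}. This is exactly the hypothesis of Lemma~\ref{lem:cubic2}. So I would apply Lemma~\ref{lem:cubic2} to the cubic $f_6(\alpha^2)$ coming from the real substitution. The three conditions~\eqref{equal}, \eqref{equal2}, \eqref{equal3} of that lemma then translate, after clearing denominators, into the three cases of the theorem. The coefficient conditions $B<0$, $C>0$, $D<0$ (together with $\Disc\geq 0$) become, respectively, the inequalities $202+82\delta_1+10\delta_2>7\delta_3$, $24278+1478\delta_1+259\delta_3>682\delta_2$, and $462+14\delta_2<42\delta_1+5\delta_3$, together with $\Disc(f_6)\geq 0$. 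The crucial point is that the sign flip in the middle coefficient (relative to the CL case) is precisely what flips the inequality $462+14\delta_2>42\delta_1+5\delta_3$ in Theorem~\ref{thm:dim6_conditions} to $462+14\delta_2<42\delta_1+5\delta_3$ here, which is the expected behaviour flagged in the introduction.

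The boundary cases~\eqref{equal} and~\eqref{equal2} of Lemma~\ref{lem:cubic2} account for the degenerate possibilities where some of the roots $\alpha$ vanish (so that $-1/2$ is a repeated root of $L_P$); tracing these through gives the single isolated $\delta$-vector $(\delta_1,\delta_2,\delta_3)=(722,10543,23548)$ and the one-parameter family $994\delta_1=27\delta_3+81872$, $497\delta_2=218\delta_3+106407$ with $\delta_3>23548$. I would verify that these are the exact translates of the CL cases~\eqref{thm:dim6_conditions_1} and~\eqref{thm:dim6_conditions_2}, with the inequality $\delta_3<23548$ reversed to $\delta_3>23548$ for the same sign-flip reason.

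The main obstacle is not conceptual but bookkeeping: one must confirm that the polynomial obtained from the real substitution really is $f_6(\alpha^2)$ with only the single sign change described, and that clearing the denominators $16$ and $64$ together with the factor $5$, $45$ etc.\ yields \emph{exactly} the stated integer inequalities and the two linear equations defining the family in case~(ii). I would double-check the discriminant computation $\Disc(f_6)$, since it is shared verbatim between the CL and real statements and so provides a useful internal consistency check. Once the substitution and the sign-flip are verified, the proof is a direct invocation of Lemma~\ref{lem:cubic2}, exactly as the (deliberately terse) preamble ``By making the substitution $z=-1/2+\alpha$ in $L_P(z)$, we readily obtain the following two theorems'' promises.
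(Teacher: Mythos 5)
Your strategy is the same as the paper's: substitute $z=-1/2+\alpha$ into~\eqref{eq:dim6}, obtain a cubic in $\alpha^2$ (the paper calls it $g_6$), and apply Lemma~\ref{lem:cubic2}, noting that the discriminant is unaffected. But the step you yourself flag as the crux --- the ``bookkeeping'' of signs --- is wrong. Writing $6!\,L_P(-1/2+x)=g_6(x^2)$ and recalling that $f_6(\beta^2)=-6!\,L_P(-1/2+\beta i)=-g_6(-\beta^2)$, the true relation is $g_6(w)=-f_6(-w)$: relative to $f_6$, \emph{two} coefficients change sign, namely the coefficient of $w^2$ \emph{and} the constant term, while the $w^3$ and $w$ coefficients are unchanged. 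Your claim that exactly one ``middle'' term flips is false, and your parenthetical ``degree-$4$ in $\alpha$, i.e.\ linear in $\alpha^2$'' is self-contradictory (degree $4$ in $\alpha$ is quadratic in $\alpha^2$). The correct coefficients are $B=\tfrac{5}{4}(202+82\delta_1+10\delta_2-7\delta_3)$ and $D=\tfrac{45}{64}(462-42\delta_1+14\delta_2-5\delta_3)$, so Lemma~\ref{lem:cubic2}(iii) applied to $g_6$ reads
\[
202+82\delta_1+10\delta_2<7\delta_3,\qquad
24278+1478\delta_1+259\delta_3>682\delta_2,\qquad
462+14\delta_2<42\delta_1+5\delta_3,\qquad
\Disc(f_6)\geq 0;
\]
that is, the first \emph{and} third inequalities flip relative to Theorem~\ref{thm:dim6_conditions}, exactly as in the seven-dimensional analogue, Theorem~\ref{thm:dim7_real_conditions}. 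The list you produce (first unflipped, third flipped) follows from neither reading of your one-flip claim nor from the correct two-flip relation, and it is inconsistent with your own treatment of case~(ii), where the reversal to $\delta_3>23548$ is precisely the flipped condition $B<0$ that you decline to impose in case~(iii).

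There is a second layer to this: the first inequality as printed in the statement you were asked to prove appears to be a typographical error in the paper, and the paper's proof (i.e.\ the method you propose, executed correctly) establishes the version with $202+82\delta_1+10\delta_2<7\delta_3$. Indeed, take $Q=\sconv{(-1,2),(-1,-1),(2,-1)}$ and $P=Q\times Q\times Q$, a six-dimensional reflexive polytope with $L_P(m)=\tfrac{1}{8}(3m+1)^3(3m+2)^3$; all roots of $L_P$ are $-1/3$ or $-2/3$, so $P$ is real, and its $\delta$-vector is $(1,993,14973,33676,14973,993,1)$, yet $202+82\cdot 993+10\cdot 14973=231358<235732=7\cdot 33676$, violating the printed inequality (cases~(i) and~(ii) plainly fail as well). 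So the fact that your inequalities match the printed theorem is a symptom of the sign error, not a confirmation of the argument: a correct derivation would have detected the discrepancy in the statement rather than reproducing it.
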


\begin{proof}
The proof is similar to Theorem~\ref{thm:dim6_conditions} above; making the substitution $z=-1/2+\alpha$, $\alpha\in\R$, in~\eqref{eq:dim6} and multiplying through by $6!$ we obtain a cubic in $\alpha^2$ given by $g_6(\alpha^2)$, where
\begin{align*}
g_6(z):=(2+2\delta_1+2\delta_2+&\delta_3)z^3+\frac{5}{4}(202+82\delta_1+10\delta_2-7\delta_3)z^2\\
&+\frac{1}{16}(24278+1478\delta_1-682\delta_2+259\delta_3)z+\frac{45}{64}(462-42\delta_1+14\delta_2-5\delta_3).
\end{align*}
The result follows from Lemma~\ref{lem:cubic2}, and noticing that $\Disc(f_6)=\Disc(g_6)$.
\end{proof}

\begin{thm}\label{thm:dim7_real_conditions}
Let $P$ be a $7$-dimensional reflexive polytope with $\delta$-vector $(1,\delta_1,\delta_2,\delta_3,\delta_3,\delta_2,\delta_1,1)$. Then $P$ is real if and only if either
\begin{enumerate}
\item
$\delta_1=2179$, $\delta_2=60657$, and $\delta_3=259723$;
\item
$10882\delta_1=81\delta_3+2674315$,
$10882\delta_2=2477\delta_3+16735603$, and
$\delta_3>259723$; or
\item
$139+67\delta_1+19\delta_2<5\delta_3$, $8197+1237\delta_1+37\delta_3>203\delta_2$, $88069+429\delta_2<3043\delta_1+75\delta_3$, and $\Disc\left(f_7\right)\geq 0$.
\end{enumerate}
\end{thm}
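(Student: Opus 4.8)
The plan is to follow the same template established for Theorems~\ref{thm:dim6_real_conditions} and~\ref{thm:dim7_conditions}, adapting the CL-case of Theorem~\ref{thm:dim7_conditions} to the real case. Since $P$ is a $7$-dimensional reflexive polytope, $-1/2$ is automatically a root of $L_P$, so we divide it out and work with the cubic factor. Concretely, I would start from the expression for $L_P(z)$ obtained via Lemma~\ref{lem:main_lemma} (already recorded in the proof of Theorem~\ref{thm:dim7_conditions}), divide by $z+1/2$, and then substitute $z=-1/2+\alpha$ with $\alpha\in\R$ rather than $z=-1/2+\beta i$. After clearing the appropriate factorials and constant $-7!/2$, this produces a cubic in $\alpha^2$; I would call it $g_7(\alpha^2)$, in direct analogy with the polynomial $g_6$ appearing in the proof of Theorem~\ref{thm:dim6_real_conditions}.

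The key structural observation is the sign change that distinguishes the real case from the CL-case. Substituting $z=-1/2+\beta i$ introduces factors of $\beta^2=-\alpha^2$ type cancellations that flip the sign of the odd-power-in-$\beta^2$ coefficients relative to the substitution $z=-1/2+\alpha$; as in the six-dimensional proof, the resulting cubic $g_7$ differs from $f_7$ only by flipping the signs of the $z^2$ and constant terms, so that the quadratic-in-$\alpha^2$ discriminant is unchanged, i.e.\ $\Disc(f_7)=\Disc(g_7)$. The polytope $P$ is real precisely when all roots $\alpha^2$ of $g_7$ are non-negative real numbers, which is exactly the hypothesis needed to invoke Lemma~\ref{lem:cubic2}. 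Writing out the three cases of Lemma~\ref{lem:cubic2} for the coefficients of $g_7$ then yields the three stated cases; the sign flips in the $z^2$ and constant coefficients of $g_7$ are precisely what turn the inequalities $139+67\delta_1+19\delta_2>5\delta_3$ and $88069+429\delta_2>3043\delta_1+75\delta_3$ of Theorem~\ref{thm:dim7_conditions} into their reversed counterparts here, while the middle inequality $8197+1237\delta_1+37\delta_3>203\delta_2$ and the discriminant condition $\Disc(f_7)\geq 0$ survive unchanged.

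The main obstacle, such as it is, is purely bookkeeping: one must verify that the substitution $z=-1/2+\alpha$ into $f(z):=-7!L_P(z)/2(z+1/2)$ really does produce the claimed $g_7$ with exactly the sign pattern asserted, and that the degenerate cases~\eqref{equal} and~\eqref{equal2} of Lemma~\ref{lem:cubic2} correspond correctly to the two special (equality) cases listed in the theorem. I expect the algebra to be routine but error-prone, so I would cross-check by confirming that the $7$-cube value $(\delta_1,\delta_2,\delta_3)=(2179,60657,259723)$ satisfies the degenerate equality case (it lies on both the CL and real boundaries, being simultaneously real and CL, just as the $4$- and $5$-cubes were), and by verifying $\Disc(f_7)=\Disc(g_7)$ symbolically. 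Once the coefficients of $g_7$ are pinned down, the three cases of the theorem drop out immediately from Lemma~\ref{lem:cubic2}, so no genuinely new idea beyond the $d=6$ argument is required.
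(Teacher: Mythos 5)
Your proposal is correct and takes essentially the same route as the paper's proof: the paper likewise substitutes $z=-1/2+\alpha$ into the cubic factor of $L_P$, obtains the cubic $g_7(\alpha^2)$ whose $z^2$ and constant coefficients are exactly the sign-flips of those of $f_7$, notes $\Disc(f_7)=\Disc(g_7)$, and concludes by Lemma~\ref{lem:cubic2}, which is precisely your argument. (One terminological slip: $g_7$ is a cubic, not a quadratic, in $\alpha^2$; the discriminant equality holds because flipping the signs of $B$ and $D$ in $Az^3+Bz^2+Cz+D$ leaves the cubic discriminant invariant.)
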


\begin{proof}
Again, the proof parallels that of Theorem~\ref{thm:dim7_conditions}. In this case we obtain the cubic in $\alpha^2$ given by $g_7(\alpha^2)$, where
\begin{align*}
g_7(z):=(1+\delta_1+\delta_2+&\delta_3)z^3+\frac{7}{4}(139+67\delta_1+19\delta_2-5\delta_3)z^2\\
&+\frac{7}{16}(8197+1237\delta_1-203\delta_2+37\delta_3)z+\frac{3}{64}(88069-3043\delta_1+429\delta_2-75\delta_3).
\end{align*}
Since $\Disc(f_7)=\Disc(g_7)$, the result follows by Lemma~\ref{lem:cubic2}.
\end{proof}

\section{Conjectures in higher dimensions}
In light of the above results, the following two conjectures seem natural:

\begin{conjecture}
Let $P$ be a $d$-dimensional CL-polytope. Then $\abs{P\cap {\Z}^d}\leq 3^d$.
\end{conjecture}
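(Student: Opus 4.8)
The conjecture $\abs{P\cap\Z^d}\leq 3^d$ for arbitrary $d$ is the statement proved for $d\leq 7$ in Corollary~\ref{cor:cube}, now asserted in full generality. The plan is to relate $\abs{P\cap\Z^d}$ to the volume and then exploit the CL-condition globally rather than dimension-by-dimension. Recall that $\abs{P\cap\Z^d}=\delta_1+d+1$ by Ehrhart's formula~\eqref{item:delta_1}, so it suffices to bound $\delta_1$. The equality case is the $d$-cube $\{-1,1\}^d$ with Ehrhart polynomial $(2m+1)^d$, so the target bound is $\delta_1\leq\delta_1^{\mathrm{cube}}$, and more ambitiously $\delta_i\leq\delta_i^{\mathrm{cube}}$ for every $i$, paralleling the coefficientwise domination observed in the four-dimensional case.

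The natural strategy is to work entirely with the roots. Since $P$ is a CL-polytope, write the $d$ roots of $L_P$ as $-1/2\pm\alpha_ji$ with $\alpha_j\in\R$ (pairing them up, with one root equal to $-1/2$ when $d$ is odd). Because $L_P(m)=\vol{P}\prod_j\bigl((m+1/2)^2+\alpha_j^2\bigr)$ on each conjugate pair, evaluating at $m=1$ gives a clean product formula
\[
\abs{P\cap\Z^d}=L_P(1)=\vol{P}\prod_{j}\left(\frac{9}{4}+\alpha_j^2\right)\Big/\prod_{j}\left(\frac{9}{4}\right)\cdot(\text{normalisation}),
\]
and similarly $1=L_P(0)=\vol{P}\prod_j\bigl(1/4+\alpha_j^2\bigr)$ recovers the identity $\vol{P}=\bigl(\prod_j(1/4+\alpha_j^2)\bigr)^{-1}$ used in Proposition~\ref{prop:CL_volume}. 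The idea is that both $\abs{P\cap\Z^d}$ and the cube's value $3^d$ are governed by the ratio $L_P(1)/L_P(0)=\prod_j(9/4+\alpha_j^2)/(1/4+\alpha_j^2)$, and for the cube all $\alpha_j$ equal a fixed value making each factor exactly $9$. One would then try to show that, subject to the constraint $\vol{P}\geq 1$ (integrality of the leading term, since $d!\vol{P}=\sum\delta_i\in\Z_{\geq1}$) together with whatever additional inequalities the $\delta_i$ must satisfy, the quantity $L_P(1)$ is maximised at the cube configuration.

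First I would set up the optimisation: maximise $\prod_j(9/4+\alpha_j^2)/(1/4+\alpha_j^2)$ over admissible $(\alpha_j)$. Each factor $g(t)=(9/4+t)/(1/4+t)$ with $t=\alpha_j^2\geq0$ is \emph{decreasing} in $t$, with supremum $9$ attained at $t=0$. This immediately gives the crude bound $L_P(1)/L_P(0)\leq 9^{\lfloor d/2\rfloor}$, hence $\abs{P\cap\Z^d}\leq 3^d$ when $d$ is even and the analogous bound when $d$ is odd (the lone $-1/2$ root contributes a factor $3/2\bigl/(1/2)=3$ to $L_P(1)/L_P(0)$, giving $3\cdot9^{(d-1)/2}=3^d$). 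This already yields the conjectured inequality \emph{provided} the factor analysis is valid, i.e.\ provided all $\alpha_j^2\geq0$ — which is exactly the CL-condition. Thus the core argument is short; the subtlety lies only in bookkeeping the odd-dimensional case and in confirming that $L_P(0)=1$ is the correct normalisation (it is, since $\delta_0=1$ forces $L_P(0)=1$).

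The main obstacle is to make this rigorous without circularity: the decomposition into factors $(1/4+\alpha_j^2)$ presupposes the CL-hypothesis, which is fine, but one must be careful that the product formula $L_P(m)=\vol{P}\prod_j\bigl((m+1/2)^2+\alpha_j^2\bigr)$ uses the correct leading coefficient and that the pairing is genuine (each $\alpha_j\in\R$, so each factor is a positive real for $m\in\{0,1\}$). Once that is secured, the inequality $\abs{P\cap\Z^d}=L_P(1)=L_P(0)\prod_j g(\alpha_j^2)\leq 1\cdot 3^d$ follows directly from $g\leq 9$ on $[0,\infty)$ and the odd-case factor of $3$. The harder, genuinely open part — which is why this remains a conjecture — is that it is not obvious the CL-hypothesis is compatible with arbitrary large $\delta$-vectors in high dimension; but \emph{if} one grants the CL-hypothesis, the product argument above should suffice, so I would expect the proof to reduce to a clean monotonicity lemma for $g$ together with the elementary identity $L_P(1)=L_P(0)\prod_j g(\alpha_j^2)$.
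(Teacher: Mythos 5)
Your core argument is correct, and it does more than the paper does: the statement you were given is only a \emph{conjecture} in the paper, established there for $d\leq 7$ (Corollary~\ref{cor:cube}) as a by-product of the explicit, dimension-by-dimension characterisations of CL $\delta$-vectors (Theorem~\ref{thm:canonical_line}, Theorems~\ref{thm:dim6_conditions} and~\ref{thm:dim7_conditions}). Your route is global and works in every dimension. Cleaned up, it reads: since $L_P(0)=1$ and $L_P(1)=\abs{P\cap\Z^d}$, factoring $L_P(z)=\vol{P}\prod_{i=1}^d(z-z_i)$ over $\C$ gives
\[
\abs{P\cap\Z^d}=\frac{L_P(1)}{L_P(0)}=\prod_{i=1}^d\frac{1-z_i}{-z_i},
\]
and under~\eqref{eq:CL} every root equal to $-1/2$ contributes the factor $3$, while every conjugate pair $-1/2\pm bi$ with $b\neq 0$ contributes $(9/4+b^2)/(1/4+b^2)\leq 9$; hence if $k$ roots equal $-1/2$ the product is at most $3^k\cdot 9^{(d-k)/2}=3^d$, with equality exactly when all roots are $-1/2$, i.e.\ $L_P(m)=(2m+1)^d$. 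Note the leading coefficient cancels in the ratio, so no information about $\vol{P}$ is needed at all. This is precisely the mechanism of Proposition~\ref{prop:CL_volume} (Bey--Henk--Wills), applied to $L_P(1)/L_P(0)$ rather than to $L_P(0)$ alone; what the paper's approach buys instead is complete descriptions of the admissible $\delta$-vectors in low dimensions, but it cannot reach general $d$, whereas your argument is dimension-free.

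A few repairs are needed before this is a theorem-quality write-up. The displayed ``normalisation'' formula in your second paragraph is garbled (dividing by $\prod_j(9/4)$ means nothing); replace it by the ratio identity above. The optimisation framing, and in particular the constraint $\vol{P}\geq 1$ (which in any case does not follow from $d!\vol{P}\in\Z_{\geq 1}$), is unnecessary: the only fact used is the pointwise bound $g(t)=(9/4+t)/(1/4+t)\leq 9$ for $t\geq 0$. Handle multiplicities honestly: there may be several roots equal to $-1/2$ (of any multiplicity $k$ with $k\equiv d \bmod 2$), not just ``the lone $-1/2$ root'' in odd dimension; since each contributes exactly $3$ and $3^2=9$, the count $3^k 9^{(d-k)/2}=3^d$ works uniformly. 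Finally, your closing hedge is unfounded: there is nothing left to ``grant'', since the CL condition is the hypothesis of the statement, and under it your argument is unconditional and complete.
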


\begin{conjecture}
Let $P$ be a $d$-dimensional CL-polytope with $\delta$-vector $(\delta_0,\ldots,\delta_d)$. Then
\[
\delta_i\leq \sum_{j=0}^d{d+1\choose 2i-j}A(d,j),\qquad\text{ for each }i\in\{0,\ldots,d\}.
\]
\end{conjecture}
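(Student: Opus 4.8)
The plan is to reduce the conjecture to a clean extremal problem and then to an inequality among the $h^*$-vectors of cubes. By Example~\ref{eg:delta_d_cube} the right-hand side is precisely the $i$th entry of the $\delta$-vector of the cube $\{-1,1\}^d$, whose Ehrhart polynomial is $(2m+1)^d$; so the assertion is that the cube dominates every $d$-dimensional CL-polytope entrywise. The first step is to parametrise CL-polytopes: if $P$ is a CL-polytope then the roots of $L_P$ are $-1/2$ together with conjugate pairs $-1/2\pm\alpha_j i$ ($\alpha_j\in\R$), and normalising by $\delta_0=L_P(0)=1$ eliminates the volume, leaving
\[
L_P(m)=(2m+1)^{d-2k}\prod_{j=1}^{k}\bigl(1+4m(m+1)s_j\bigr),\qquad k=\lfloor d/2\rfloor,\ s_j=\tfrac{1}{1+4\alpha_j^2}\in(0,1],
\]
with the cube corresponding to $s_1=\dots=s_k=1$ and, more generally, $\vol{P}=2^d\prod_j s_j$.

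The second step exploits multilinearity. Inverting Lemma~\ref{lem:main_lemma} gives $\delta_i=\sum_{l=0}^i(-1)^l\binom{d+1}{l}L_P(i-l)$, a fixed linear functional of $L_P(0),\dots,L_P(i)$; since each factor above is affine in the corresponding $s_j$, every $\delta_i$ is a multilinear function of $(s_1,\dots,s_k)$. A multilinear function on the closed box $[0,1]^k\supseteq(0,1]^k$ attains its maximum at a vertex, and the value at a vertex depends only on the number $t$ of coordinates equal to $1$, where $L_P(m)=(2m+1)^{2t+(d-2k)}$. Bounding $\delta_i(s)$ by this maximum (enlarging the class to all $s\in(0,1]^k$ is harmless for an upper bound) reduces the conjecture to showing that, for all $e\equiv d\pmod 2$ with $0\le e\le d$ and all $i$,
\[
\Psi_e(i):=\sum_{l=0}^i(-1)^l\binom{d+1}{l}\bigl(2(i-l)+1\bigr)^e\ \le\ \Psi_d(i).
\]
Note that $\vol{P}=2^d\prod_j s_j$ is itself multilinear with maximum $2^d$ at $s=\mathbf{1}$, so the same principle applied to the leading coefficient already recovers the bound $\vol{P}\le 2^d$ of Proposition~\ref{prop:CL_volume}.

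Finally, $\Psi_e(i)$ is the $i$th coefficient of the numerator of $\sum_m(2m+1)^e t^m$ rewritten over $(1-t)^{d+1}$, namely of $(1-t)^{d-e}h^{(e)}(t)$, where $h^{(e)}(t)$ denotes the (nonnegative) $h^*$-polynomial of the $e$-cube. The required inequality is therefore equivalent to the coefficientwise domination
\[
(1-t)^{d-e}\,h^{(e)}(t)\ \preceq\ h^{(d)}(t),
\]
a purely combinatorial statement about Eulerian numbers. I expect this to be the main obstacle: the alternating coefficients of $(1-t)^{d-e}$ rule out any termwise comparison, so one must instead produce a manifestly nonnegative expression for $h^{(d)}(t)-(1-t)^{d-e}h^{(e)}(t)$. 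Two natural routes are to exploit the product structure $\{-1,1\}^d=\{-1,1\}^e\times\{-1,1\}^{d-e}$ together with a Hadamard-product identity for the associated Ehrhart series, or to invoke the real-rootedness and log-concavity of the Eulerian polynomials. The cases $d\le 7$ established earlier in the paper, which realise exactly this domination, furnish evidence that it holds in general.
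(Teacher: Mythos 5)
This statement is one of the paper's \emph{conjectures}: the authors give no proof of it, and only note that it has been verified for $d\leq 7$ via the characterisation theorems of the earlier sections. Your attempt is therefore not competing with a proof in the paper, and it is also not itself a proof --- it is a reduction with the key step left open, as you yourself acknowledge. The reduction is correct and genuinely valuable: factoring $L_P$ over its roots and normalising by $L_P(0)=1$ does give $L_P(m)=(2m+1)^{d-2k}\prod_{j=1}^k\bigl(1+4m(m+1)s_j\bigr)$ with $s_j\in(0,1]$ (real roots of a CL-polynomial must equal $-1/2$, and their multiplicity has the parity of $d$, so this parametrisation covers all cases); inverting Lemma~\ref{lem:main_lemma} as $\delta_i=\sum_{l=0}^i(-1)^l\binom{d+1}{l}L_P(i-l)$ shows each $\delta_i$ is multilinear in $(s_1,\dots,s_k)$; and a multilinear function on $[0,1]^k$ is maximised at a vertex, where $L_P(m)=(2m+1)^e$ with $e\equiv d\pmod 2$. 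This correctly reduces the conjecture to the coefficientwise domination $(1-t)^{d-e}h^{(e)}(t)\preceq h^{(d)}(t)$, where by Example~\ref{eg:delta_d_cube} the right-hand side of the conjecture is $[t^i]h^{(d)}(t)$. But that domination is exactly where the argument stops: you flag it as ``the main obstacle'' and sketch two possible routes without carrying either out. The difficulty is real, not cosmetic: since $h^{(d)}(t)-(1-t)^{d-e}h^{(e)}(t)=(1-t)^{d+1}\sum_{m\geq 0}\bigl((2m+1)^d-(2m+1)^e\bigr)t^m$, what is needed is that a certain pointwise-nonnegative polynomial has nonnegative ``numerator'' coefficients over $(1-t)^{d+1}$, and this implication is false for general nonnegative polynomials (e.g.\ $(m-1)^2$), so some special structure of $(2m+1)^d-(2m+1)^e$ must be exploited. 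A key lemma is missing, and under your argument the conjecture remains open.

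Two further cautions. First, your relaxed statement is strictly stronger than the conjecture: maximising over all of $[0,1]^k$ discards every polytopality constraint (integrality, Hibi's bound $\delta_1\leq\delta_i$, the existence of an actual reflexive $P$), so a proof of the domination would settle the conjecture, but the conjecture could in principle hold even if the domination failed. Second, and relatedly, your closing claim that the paper's $d\leq 7$ results ``realise exactly this domination'' is inaccurate: Theorems~\ref{thm:dim_4_conditions}, \ref{thm:dim_5_conditions}, \ref{thm:dim6_conditions} and~\ref{thm:dim7_conditions} bound the $\delta$-vectors of actual reflexive CL-polytopes and say nothing about arbitrary points of the box. (The domination itself is easy to check directly for small $d$, and the boundary cases are reassuring --- it holds with equality at $i\in\{0,d\}$, and at $i=1$ it reads $3^e-d-1\leq 3^d-d-1$ --- but that is a separate computation, not something established in the paper.) In short: a correct and interesting reduction, but with the central combinatorial inequality unproven the proposal does not prove the statement.
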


\noindent
We have already shown that these conjectures hold when $d\leq 7$. Slightly less confidently, we suggest:

\begin{conjecture}
Let $P$ be a $d$-dimensional real reflexive polytope. Then $\vol{P}\geq 2^d$.
\end{conjecture}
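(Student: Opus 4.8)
The plan is to reduce the conjecture to the already-established Proposition~\ref{prop:real_reflexive_CS_volume}. By the reflexivity characterisation~\eqref{eq:reflexive_characterisation}, the roots of $L_P$ are symmetric about the line $\Re{z}=-1/2$; since $P$ is real, they assemble into real pairs $-1/2\pm a_j$ with $a_j\in\R_{\geq 0}$ (together with a central root $-1/2$ when $d$ is odd). Exactly as in the proof of Proposition~\ref{prop:CL_volume}, the identity $\prod_j z_j=(-1)^d/\vol{P}$ factors through these pairs as a product of terms $\tfrac14-a_j^2$. If one knew that every such $a_j$ satisfied $a_j<\tfrac12$ (equivalently, that $P$ satisfies~\eqref{eq:CS}), then each factor would lie in $(0,\tfrac14]$, and the product would be bounded above by $4^{-k}$; feeding this into the volume identity yields $\vol{P}\geq 2^d$ in both parities. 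This last step is precisely Proposition~\ref{prop:real_reflexive_CS_volume}, so the entire conjecture follows once one proves the implication \emph{real reflexive $\Longrightarrow$~\eqref{eq:CS}}, i.e.\ that all real roots of $L_P$ lie in the open interval $(-1,0)$. This implication is the $d=4$ content of Proposition~\ref{prop:4real_roots} and its analogues in low dimension, so the conjecture is really a uniform-in-$d$ form of those results.

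To attack the crux I would work with the factorisation $L_P(t)=\vol{P}\prod_j(t-r_j)$, valid since all roots $r_j$ are real, and combine three constraints: the counting positivity $L_P(n)\geq 1$ for every integer $n\geq 0$; Macdonald reciprocity $L_P(-m-1)=(-1)^dL_P(m)$, which shows $L_P$ is nonzero at every integer and confines the sign behaviour symmetrically about $-1/2$; and the half-strip bound from Theorem~\ref{thm:reflexive_strip}, which already holds for arbitrary $d$ in the real case and gives $a_j\leq (d-1)/2$. The positivity of $L_P$ at all non-negative integers forces an even number of roots in each open unit interval $(n,n+1)$, $n\geq 0$, and (via reciprocity) the mirror statement for $t\leq-1$; the goal would be to show that a symmetric real spectrum with some $a_j\geq\tfrac12$ cannot simultaneously keep $L_P$ positive at all the integer points. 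Note that the hypothesis that \emph{all} roots are real is essential here: in the mixed case~\eqref{item:dim4_case4} a reflexive polytope may possess a real root pair with $a>\tfrac12$, so any argument must genuinely use the absence of complex roots rather than reflexivity alone.

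The main obstacle is controlling the possibility of a real spectrum that spreads symmetrically far from $-1/2$ while remaining positive at every integer. Root-location inequalities alone do not achieve this—\eqref{eq:HS} permits $a_j$ as large as $(d-1)/2$—so a successful proof must inject the realisability of the $\delta$-vector: its non-negativity, Hibi's Lower Bound inequalities, and palindromy. A promising reformulation is through the Rodriguez-Villegas correspondence between the root locus of $L_P$ and that of the $h^*$-polynomial $\delta(t)$, under which~\eqref{eq:CS} should translate into a statement about the roots of $\delta(t)$ lying in or on the unit disc; proving such a disc condition directly from the combinatorial inequalities on $(\delta_0,\ldots,\delta_d)$ would give a dimension-free argument. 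I expect this structural input—an inequality on the $\delta$-vector strong enough to exclude widely spread real spectra—to be the genuinely hard part, consistent with the fact that even the $d=4$ case was settled only computationally and without a conceptual explanation.
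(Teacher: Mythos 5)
The statement you are asked to prove is, in the paper itself, an open \emph{conjecture}: the authors explicitly say that it is known only for $d=4$, and even there only by running through the Kreuzer--Skarke classification to verify computationally that every $4$-dimensional real reflexive polytope satisfies~\eqref{eq:CS} (Proposition~\ref{prop:4real_roots}), after which Proposition~\ref{prop:real_reflexive_CS_volume} gives $\vol{P}\geq 2^4$. Your proposal reproduces exactly this reduction: the pairing of real roots as $-1/2\pm a_j$, the identity $\prod_j z_j=(-1)^d/\vol{P}$, and the observation that $a_j<1/2$ for all $j$ (i.e.~condition~\eqref{eq:CS}) forces each factor $\tfrac14-a_j^2$ into $(0,\tfrac14]$ and hence $\vol{P}\geq 2^d$. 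That reduction is correct, but it is precisely Proposition~\ref{prop:real_reflexive_CS_volume}, which the paper already has; it does not advance the conjecture.

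The genuine gap is the implication \emph{real reflexive $\Longrightarrow$~\eqref{eq:CS}}, which you correctly identify as the crux but do not prove. Everything you offer toward it --- positivity of $L_P$ at non-negative integers, Macdonald reciprocity, the bound $a_j<\lfloor d/2\rfloor-\tfrac12$ from Theorem~\ref{thm_h}, and a hoped-for unit-disc condition on the roots of the $h^*$-polynomial derived from palindromy and Hibi's inequalities --- is a list of constraints and a research direction, not an argument. Indeed these constraints demonstrably do not suffice as stated: they are all consequences of $\delta$-vector combinatorics alone, whereas the paper's $4$-dimensional verification had to invoke the full geometric classification, and the authors emphasise that even in that case a ``theoretical explanation'' is lacking (the empty region in Figure~\ref{fig:3d_parabola_and_tangent} corresponds to a ``missing inequality'' nobody has found). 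So your proposal should be read as a correct restatement of why the conjecture is hard, together with the same reduction the paper uses, rather than as a proof; the key step remains entirely open.
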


\noindent
In~\S\ref{sec:dim_4_real_reflexive} we shown that this conjecture holds when $d=4$, however this is achieved by using the classification of Kreuzer--Skarke~\cite{KS00} to show that the roots of $L_P$ satisfy~\eqref{eq:CS} for every $4$-dimensional real reflexive polytope $P$, and lacks a theoretical explanation. Experimentation suggests the following:

\begin{conjecture}
Let $P$ be a $6$- or $7$-dimensional reflexive polytope, and suppose that there exists a root $-1/2+a+bi\in\C$ of $L_P$, where $a\neq 0$ and $b\neq 0$. Then $\abs{a}<5/2$.
\end{conjecture}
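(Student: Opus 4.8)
The plan is to follow the template of Propositions~\ref{prop:canonical_strip_4} and~\ref{prop:canonical_strip_5}, replacing the quadratic-in-$\alpha^2$ analysis by a cubic-in-$\alpha^2$ analysis built on the polynomials $f_6$ and $f_7$. First I would substitute $z=-1/2+\alpha$ (after dividing out the factor $z+1/2$ in the $7$-dimensional case) so that the relevant roots are governed by $f_6(\alpha^2)=0$ (resp.\ $f_7(\alpha^2)=0$). Writing $W=\alpha^2$, the hypothesis that there is a root $-1/2+a+bi$ with $a\neq 0$ and $b\neq 0$ is exactly the statement that $W=\alpha^2$ is non-real, i.e.\ that $f_6$ (resp.\ $f_7$) has a complex-conjugate pair of roots $re^{\pm i\theta}$ together with a single real root $s$; equivalently $\Disc(f_6)<0$. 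Since the case-\eqref{item:dim4_case1} roots are $\alpha=\sqrt{r}\,e^{\pm i\theta/2}$, one has $\abs{a}=\abs{\Re{\alpha}}\le\abs{\alpha}=\sqrt{r}$, so it suffices to prove the clean bound $r<25/4$; for both $d=6$ and $d=7$ this yields the stated $\abs{a}<5/2$ (strictly stronger than~\eqref{eq:HS} when $d=7$).

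Next I would extract $r$ from the coefficients. Factoring $f_6(W)=A(W-s)(W^2-2r\cos\theta\,W+r^2)$ and comparing coefficients gives the Vieta relations $s+2r\cos\theta=-B/A$, $2sr\cos\theta+r^2=C/A$, and $sr^2=-D/A$, where $A,B,C,D$ are the explicit linear functions of $\delta_1,\delta_2,\delta_3$ read off from $f_6$. In the quadratic setting of Proposition~\ref{prop:canonical_strip_4} there was no real root $s$, so $r^2=C/A$ was a \emph{single} rational function of the $\delta_i$ that could be bounded directly by a monotonicity argument; here the product of the complex pair satisfies $r^2=-D/(As)$, so $r$ is entangled with the auxiliary real root $s$. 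I would eliminate $s$ using the first two relations and then try to show $r<25/4$ over the semialgebraic region cut out by $\Disc(f_6)<0$, together with the constraints $\delta_1\le\delta_2$ and $\delta_1\le\delta_3$ (Hibi's Lower Bound Theorem) and $\delta_i\ge 1$. As in the lower-dimensional proofs I would expect the supremum of $r$ to be approached either on the discriminant locus $\Disc(f_6)=0$, where the complex pair degenerates, or asymptotically as $\delta_2,\delta_3\to\infty$; the argument would then proceed by parameterising these boundary and asymptotic loci and reducing to the monotonicity of a univariate auxiliary function, in the spirit of the functions $H(\delta_2)$ and $H_1,H_2$ appearing earlier.

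The main obstacle is precisely this elimination step. Unlike the two-variable quadratic situations of dimensions~$4$ and~$5$, the discriminant $\Disc(f_6)$ is a quartic in the three variables $(\delta_1,\delta_2,\delta_3)$, the region it defines is unbounded, and $r$ is no longer a single rational function of the coefficients but is coupled to $s$ through a cubic. I do not see how to collapse the resulting three-variable optimisation to a tractable univariate one without substantial extra structure, and indeed the uniform bound $r<25/4$ across this region is what we have verified only experimentally. A complete proof would likely require either a change of variables that decouples $r$ from the real root $s$, or a rigorous certificate---for instance a sum-of-squares or quantifier-elimination argument---showing that $25/4-r$ is nonnegative throughout $\{\Disc(f_6)<0\}$; the absence of such a certificate is exactly why the statement is recorded here as a conjecture rather than a theorem.
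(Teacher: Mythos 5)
This statement is not a theorem in the paper: it is stated as a conjecture, supported only by experimentation, and the authors themselves note merely that \emph{if} it holds then hypothesis~\eqref{eq:HS} would follow for $d\leq 7$. So there is no proof in the paper to compare against, and your conclusion---that the statement should remain a conjecture because the key bound resists proof---is exactly the authors' own position.

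That said, your sketch is an accurate account of what a proof would have to do, and it is faithful to the paper's method: it is precisely the cubic analogue of Propositions~\ref{prop:canonical_strip_4} and~\ref{prop:canonical_strip_5}. Your reductions are correct as far as they go: the hypothesis $a\neq 0$, $b\neq 0$ is equivalent to $\Disc(f_6)<0$ (resp.\ $\Disc(f_7)<0$, after dividing out $z+1/2$); the relevant roots are $\alpha=\pm\sqrt{W}$ with $W=re^{\pm i\theta}$ the complex-conjugate pair; $\abs{a}\leq\abs{\alpha}=\sqrt{r}$; and the Vieta relations $s+2r\cos\theta=-B/A$, $2sr\cos\theta+r^2=C/A$, $sr^2=-D/A$ are right. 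The obstruction you name is the genuine one: in dimensions $4$ and $5$ the absence of an auxiliary real root made $r$ a single rational function of the coefficients, amenable to the univariate monotonicity arguments via $H$, $H_1$, $H_2$, whereas here $r$ is coupled to $s$ through a cubic over an unbounded three-variable semialgebraic region, and no analogous collapse is available. One caveat worth recording: your sufficient condition $r<25/4$ is potentially strictly stronger than the conjecture, since $\abs{a}=\sqrt{r}\,\abs{\cos(\theta/2)}$ may be small even when $\sqrt{r}\geq 5/2$; the paper's $d=4,5$ proofs do establish the stronger bound $\sqrt{r}<3/2$ there, but there is no guarantee that the strong form survives in dimensions $6$ and $7$, so a proof of the conjecture might have to track $\theta$ as well as $r$, making the elimination problem harder still.
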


\noindent
If this conjecture holds then we can conclude that hypothesis~\eqref{eq:HS} holds when $d\leq 7$; however see~\S\ref{sec:canonical_strip} for an example when $d=10$ whose roots fails to satisfy~\eqref{eq:HS}.

\section{Hypothesis~\eqref{eq:HS} and an example in dimension ten}\label{sec:canonical_strip}
\subsection{Hypothesis~\eqref{eq:HS}}
In this section we prove Theorem~\ref{thm:reflexive_strip}. We actually prove a stronger result:

\begin{thm}\label{thm_h}
Let $P$ be a $d$-dimensional reflexive polytope and let $\alpha\in\C$ be a root of $L_P$.
\begin{enumerate}
\item\label{item:thm_h_1}
If $\alpha\in\R$ then $\alpha$ satisfies $-\!\left\lfloor d/2\right\rfloor<\alpha<\left\lfloor d/2\right\rfloor-1$.
\item\label{item:thm_h_2}
If $d\leq 5$ then $\alpha$ satisfies $-\!\left\lfloor d/2\right\rfloor<\Re{\alpha}<\left\lfloor d/2\right\rfloor-1$.
\end{enumerate}
\end{thm}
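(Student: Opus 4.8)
The plan is to derive part~\ref{item:thm_h_2} from part~\ref{item:thm_h_1} together with the case-by-case analysis already completed in~\S\S\ref{sec:dim23}--\ref{sec:dim5}, and to prove part~\ref{item:thm_h_1} uniformly in $d$. For part~\ref{item:thm_h_2}, recall that when $d\le 5$ every root of $L_P$ falls into one of the explicit cases: roots with $\Re{\alpha}=-1/2$ lie trivially inside the target interval; genuinely real roots are controlled by part~\ref{item:thm_h_1}; and the remaining ``mixed'' roots $-1/2+a\pm bi$ with $a,b\neq 0$ satisfy $\abs{a}<3/2$ by Propositions~\ref{prop:canonical_strip_4} and~\ref{prop:canonical_strip_5} (with the dichotomies of Propositions~\ref{prop:dim_2_dicotomy} and~\ref{prop:dim_3_dicotomy} covering $d=2,3$). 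Since $\lfloor d/2\rfloor-1$ and $-\lfloor d/2\rfloor$ equal $1,-2$ for $d\in\{4,5\}$ and $0,-1$ for $d\in\{2,3\}$, assembling these cases yields exactly the stated bound. So the substantive work is part~\ref{item:thm_h_1}.

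For part~\ref{item:thm_h_1}, Macdonald reciprocity makes the roots symmetric about $-1/2$, so it suffices to prove the upper bound $\alpha<\lfloor d/2\rfloor-1$; the lower bound follows by applying $\alpha\mapsto-1-\alpha$. I will in fact show $L_P(x)>0$ for every real $x\ge\lfloor d/2\rfloor-1$, which rules out a real root there. Using $\delta_j=\delta_{d-j}\ge 0$ and Lemma~\ref{lem:main_lemma}, I group the palindromic pairs:
\[
L_P(x)=\sum_{0\le j<d/2}\delta_j\left({x+d-j\choose d}+{x+j\choose d}\right)+\varepsilon_d\,\delta_{d/2}{x+d/2\choose d},
\]
where $\varepsilon_d=1$ for $d$ even and $0$ for $d$ odd. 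As $\delta_0=1$ and all $\delta_j\ge 0$, it is enough to show each bracket $b_j(x):={x+d-j\choose d}+{x+j\choose d}$ is nonnegative on the range (and that the middle term, when present, is nonnegative, which is immediate since its factors become a run of nonnegative consecutive reals once $x\ge d/2-1$).

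The engine is the following elementary inequality. Writing the two binomial arguments symmetrically about their common centre, with $t=x+1/2$ and $s=d/2-j$, each $b_j$ becomes ${\;}$a constant multiple of $\prod_k\big((t+s)^2-k^2\big)+\prod_k\big((t-s)^2-k^2\big)$, where $k$ ranges over the half-integers (resp.\ integers) in $(0,(d-1)/2]$ and, for odd $d$, there is an extra factor $(t\pm s)$ from the central linear term. When $t\ge(d-1)/2$, i.e.\ $x\ge d/2-1$, every factor $(t+s)^2-k^2$ is nonnegative, and a factor-by-factor comparison shows the first product dominates the absolute value of the second: in the case $(t-s)^2\ge k^2$ this uses $(t+s)^2\ge(t-s)^2$, and in the case $(t-s)^2<k^2$ it uses $t^2+s^2\ge k^2$ (valid since $t\ge k$). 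Hence $b_j(x)\ge 0$, and moreover $b_0(x)={x+d\choose d}+{x\choose d}>0$ strictly for $x\ge\lfloor d/2\rfloor-1$ (the dominant term ${x+d\choose d}$ exceeds $\abs{{x\choose d}}$ termwise), giving $L_P(x)\ge\delta_0 b_0(x)>0$.

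The main obstacle is that this threshold $x\ge d/2-1$ is exactly what we need for even $d$, but for odd $d$ it only covers $x\ge(d-2)/2$, leaving the half-open interval $[\lfloor d/2\rfloor-1,(d-2)/2)$ of length $\tfrac12$, where $t$ can dip below $(d-1)/2$ and the comparison fails for the outermost factor $k=(d-1)/2$ (and, for larger $d$, for several factors). On this short interval I would argue directly: its left endpoint $x=\lfloor d/2\rfloor-1$ is a nonnegative integer at which $L_P$ counts lattice points and is therefore a positive integer, while at the right endpoint the factor-by-factor bound already applies; in between one checks positivity of the finitely many brackets by hand, e.g.\ via the Pascal-type identity ${w+1\choose d}+{w\choose d}=\tfrac{1}{d!}\,w(w-1)\cdots(w-d+2)\,(2w-d+2)$ for the central bracket and a crude domination of the small negative contributions by ${x+d\choose d}$ for the others. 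Combining this with the clean factor-by-factor estimate on $x\ge(d-2)/2$ completes part~\ref{item:thm_h_1}; the odd-dimensional boundary interval is the only place the argument is genuinely fiddly rather than formal.
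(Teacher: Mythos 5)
Your treatment of part~(ii) coincides with the paper's: roots on the line $\Re{z}=-1/2$ are trivial, real roots are handled by part~(i), the dichotomies of Propositions~\ref{prop:dim_2_dicotomy} and~\ref{prop:dim_3_dicotomy} cover $d=2,3$, and Propositions~\ref{prop:canonical_strip_4} and~\ref{prop:canonical_strip_5} cover the mixed roots for $d=4,5$. Your part~(i) also follows the paper's overall strategy --- prove $L_P(x)>0$ for real $x\geq\lfloor d/2\rfloor-1$ by grouping the palindromic pairs, then get the lower bound from $L_P(-1-x)=(-1)^dL_P(x)$ --- but your factor-by-factor comparison has, as you yourself flag, a genuine hole when $d$ is odd: on the interval $[\lfloor d/2\rfloor-1,(d-2)/2)$ the comparison fails, and none of your proposed patches constitutes a proof. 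Positivity of $L_P$ at the two endpoints of that interval says nothing about its interior; ``checking the finitely many brackets by hand'' is not available, since the number of brackets and their degrees grow with $d$ while the theorem is claimed for all $d$; and a ``crude domination of the small negative contributions by ${x+d\choose d}$'' cannot work across brackets, because each bracket enters weighted by $\delta_j$, which is an arbitrary positive integer (bounded below by $1$ but not above), so no estimate against the $j=0$ term alone can be uniform in the $\delta$-vector. You are therefore forced to prove that each individual bracket $b_j(x)={x+d-j\choose d}+{x+j\choose d}$ is nonnegative on the gap --- and that is exactly where your chosen pairing provably breaks down (for the inner brackets the condition $k^2\leq t^2+s^2$ fails for the outermost $k$).

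The missing idea, which is the paper's key step, is to pair the factors differently: first cancel the $2j$ linear factors common to the two binomials. Writing $z=x+1/2$, one obtains
\[
d!\,b_j(x)=\prod_{m=1}^{j}\left(z^2-\left(m-\tfrac{1}{2}\right)^2\right)\cdot M_j(z),
\qquad
M_j(z)=\prod_{l=0}^{d-2j-1}\left(z+\tfrac{1}{2}+j+l\right)+\prod_{l=0}^{d-2j-1}\left(z-\tfrac{1}{2}-j-l\right).
\]
The prefactor is nonnegative as soon as $z\geq j-1/2$, and since $j\leq\lfloor d/2\rfloor$ this holds for every bracket, for both parities of $d$, exactly when $x\geq\lfloor d/2\rfloor-1$; in $M_j$ only the monomials whose degree has the same parity as $d$ survive, with positive coefficients, so $M_j(z)>0$ for all $z>0$. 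Since $\delta_0=1$ makes the $j=0$ contribution strictly positive, this gives $L_P(x)>0$ on all of $[\lfloor d/2\rfloor-1,\infty)$ with no boundary interval left over, and the lower bound follows by the symmetry as you intended. In short: your strategy is the paper's, and your even-$d$ argument is sound, but the odd-$d$ gap is real, and closing it requires this common-factor factorisation (or an equivalent argument uniform in $d$), not an endpoint check.
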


\begin{proof}
First we prove case~\eqref{item:thm_h_1}. Let $(1,\delta_1,\ldots,\delta_{\lfloor d/2 \rfloor},\ldots,\delta_1,1)$ be the $\delta$-vector of $P$, where $\delta_i\in\Z_{\geq 1}$ for each $1\leq i\leq\lfloor d/2\rfloor$. By Lemma~\ref{lem:main_lemma} we have that $L_P(z)=\sum_{i=0}^d\delta_i{n+d-i\choose d}$. Define
\begin{align*}
N_i(z)&:=\prod_{j=0}^{d-1}(z+d-i-j)+\prod_{j=0}^{d-1}(z+i-j),\qquad\text{ for each }0\leq i\leq\lfloor d/2\rfloor-1,
\intertext{and}
N_{\lfloor d/2 \rfloor}(z)&:=
\begin{cases}
\displaystyle\prod_{j=0}^{d-1}\left(z+ \frac{d}{2}-j\right),&\text{ when $d$ is even};\vspace{0.5em}\\
\displaystyle\prod_{j=0}^{d-1}\left(z+\frac{d+1}{2}-j\right)+\prod_{j=1}^d\left(z+\frac{d-1}{2}-j\right),&\text{ when $d$ is odd}.
\end{cases}
\end{align*}
Then $L_P(z)=\sum_{i=0}^{\lfloor d/2\rfloor}\delta_iN_i(z)/d!$. Set
\[
f(z):=d!L_P\left(z-\frac{1}{2}\right)=\sum_{i=0}^{\lfloor d/2\rfloor}\delta_iN_i\left(z-\frac{1}{2}\right).
\]
It is sufficient to prove that all the real roots of $f$ are contained in the open interval $(-\lfloor d/2\rfloor +1/2,\lfloor d/2\rfloor-1/2)$. Note that $f$ satisfies
\begin{equation}\label{funceq2}
f(z)=(-1)^df(-z).
\end{equation}

\noindent
For $N_i(z-1/2)$, $0\leq i\leq\lfloor d/2\rfloor$, we have the following equalities:
\begin{align*}
N_i\left(z-\frac{1}{2}\right)&=\prod_{j=0}^{d-1}\left(z+d-\frac{1}{2}-i-j\right)+\prod_{j=0}^{d-1}\left(z-\frac{1}{2}+i-j\right)\\
&=\prod_{l=0}^{2i-1}\left(z-\frac{1}{2}+i-l\right)\left(\prod_{j=0}^{d-2i-1}\left(z-\frac{1}{2}+d-i-j\right)+\prod_{j=0}^{d-2i-1}\left(z-\frac{1}{2}-i-j\right)\right)\\
&=\prod_{l=0}^{2i-1}\left(z-\frac{1}{2}+i-l\right)M_i(z),
\intertext{where}
M_i(z)&:=\prod_{j=0}^{d-2i-1}\left(z+\frac{1}{2}+i+j\right)+\prod_{j=0}^{d-2i-1}\left(z-\frac{1}{2}-i-j\right).
\end{align*}

\noindent
The coefficients of $M_i$ are positive rational numbers when the parity of the degree of $z$ is the same as the parity of $d$, and its coefficients are $0$ when the parity of the degree of $z$ is different with the parity of $d$. Let $\alpha$ be a real number with $\alpha\geq\lfloor d/2\rfloor-1/2$. Since $\alpha>0$ we have that $M_i(\alpha)>0$. In addition, $\prod_{l=0}^{2i-1}(\alpha-(1/2-i+l))>0$, since $0\leq l\leq 2i-1$ and $0\leq i\leq\lfloor d/2\rfloor$. Hence $\alpha$ cannot be a root of $f$ (since each $\delta_i$ is positive). Moreover, by~\eqref{funceq2}, for a real number $\beta$ with $\beta\leq-\left\lfloor d/2\right\rfloor+1/2$, $\beta$ also cannot be a root of $f(z)$.

We have already shown that case~\eqref{item:thm_h_2} holds when $d=2$ and $d=3$ (Propositions~\ref{prop:dim_2_dicotomy} and~\ref{prop:dim_3_dicotomy}, respectively). When $d=4$ the roots of $L_P(z)$ fall into the four cases~\eqref{item:dim4_case2}--\eqref{item:dim4_case1} described at the beginning of~\S\ref{sec:dim4}. Of these,~\eqref{item:dim4_case2}--\eqref{item:dim4_case4} satisfy~\eqref{item:thm_h_2}: either a root $\alpha$ is of the form $-1/2\pm bi$, and so $\Re{\alpha}=-1/2$, or $\alpha$ is real and so is covered by~\eqref{item:thm_h_1}. The only remaining possibility is~\eqref{item:dim4_case1}, in which case Proposition~\ref{prop:canonical_strip_4} gives the result. Similarly, when $d=5$ the result follows from Proposition~\ref{prop:canonical_strip_5}.
\end{proof}

\subsection{Realising a simplex from a $\delta$-vector}\label{subsec:find_P_from_delta}
In~\S\ref{subsec:dim_10_example} we give a $10$-dimensional reflexive polytope failing to satisfy~\eqref{eq:HS}. Finding an integer-valued palindromic vector whose numerics give an example is straight-forward; the difficulty lies in showing that this vector is the $\delta$-vector for a lattice polytope. The method we describe below can be used to find all lattice polytopes $P$ with palindromic vector of the form $(1,1,\delta_2,\ldots,\delta_{\lfloor d/2\rfloor},\ldots,\delta_2,1,1)$ of length $d+1$, if such $P$ exist, and otherwise to prove that there are no such $P$.

Let $\delta=(1,1,\delta_2,\ldots,\delta_{\lfloor d/2\rfloor},\ldots,\delta_2,1,1)$ be of length $d+1$, where $\delta_2,\ldots,\delta_{\lfloor d/2\rfloor}\in\Z_{\geq 1}$, and suppose that this is the $\delta$-vector for a lattice polytope $P$ of dimension $d$. Since $\delta$ is palindromic, $P$ is necessarily reflexive. In particular the origin is the only (strict) interior lattice point of $P$. Since $\delta_1=\abs{P\cap\Z^d}-d-1=1$ we know that $P$ is a simplex with $\bdry{P}\cap\Z^d=\V{P}$. Hence $P$ is a terminal reflexive Fano simplex (see~\cite{KN12} for a survey of Fano polytopes). The corresponding toric variety $X$ given by the spanning fan of $P$ is a Gorenstein fake weighted projective space: there exists some weighted projective space $Y=\Proj(\lambda_0,\ldots,\lambda_d)$ with well-formed weights $(\lambda_0,\ldots,\lambda_d)\in\Z_{\geq 0}^d$ such that $X$ is the quotient of $Y$ by the action of a finite group $G$ acting free in codimension one. For details see~\cite{Buc08,Con02,Kas08b}. By \emph{well-formed} we mean that $\gcd{\lambda_0,\ldots,\widehat{\lambda_i},\ldots,\lambda_d}=1$ for each $i\in\{0,\ldots,d\}$, where $\widehat{\lambda_i}$ indicates that the $i$-th weight $\lambda_i$ is omitted.

Let $N'\subset\Z^d$ be the sublattice generated by $\V{P}$. Then $G=\Z^d/N'$ and the order of $G$ is given by the index
\[
\mult{P}:=\left[\Z^d:N'\right],
\]
which we call the \emph{multiplicity} of $P$. If we restrict $P$ to the sublattice $N'$, we recover (up to isomorphism) the simplex $Q$ associated with the weighted projective space $Y$~\cite[Proposition~2]{BB92}. Moreover, there exists a matrix $H$ in Hermite normal form with $\det{H}=\mult{P}$ such that
\begin{equation}\label{eq:wps_hermite}
P\cong Q\cdot H.
\end{equation}
Hence $\vol{P}=\mult{P}\cdot\vol{Q}$. In particular,
\begin{equation}\label{wps:poss_mults}
\mult{P}\mid d!\vol{P}=\sum_{i=0}^d\delta_i,
\end{equation}
and so the possible multiplicities are determined by $\delta$. The volume $d!\vol{Q}=h$, where $h:=\lambda_0+\cdots+\lambda_d$ is defined to be the sum of the weights of $Q$ (and is equal to the Fano index of $Y$), so that $h$ is uniquely determined for each choice of multiplicity.

Assume that we have chosen a multiplicity, and consequently have fixed a value for $h$. Since $P$ is terminal and reflexive we have that $Q$ must be terminal and reflexive~\cite[Corollary~2.4 and Corollary~2.5]{Kas08b}. By, for example,~\cite[Lemma~3.5.6]{CK-MirrorSymmetry}, we have that $Q$ is reflexive if and only if
\begin{equation}\label{eq:wps_reflexive}
\lambda_i\mid h,\qquad\text{ for each }i\in\{0,\ldots,d\}.
\end{equation}
Without loss of generality we may assume that the weights are ordered $\lambda_0\leq\ldots\leq\lambda_d$. Since $Q$ is terminal, by~\cite[Theorem~3.5]{Kas08b} we have the strict inequalities
\begin{equation}\label{eq:wps_terminal}
\frac{\lambda_i}{h}<\frac{1}{d-i+2},\qquad\text{ for each }i\in\{2,\ldots,d\}.
\end{equation}
Thus the possible choices of weights for $Q$ are greatly restricted, and can easily be listed for any given $h$ using~\eqref{eq:wps_reflexive} and~\eqref{eq:wps_terminal}.

Suppose now that we have chosen some possible weights $(\lambda_0,\ldots,\lambda_d)$ for $Q$. These weights must satisfy three conditions, as follows.
\begin{enumerate}
\item\label{item:wps_iff_terminal}
The inequalities given by~\eqref{eq:wps_terminal} are necessary but not sufficient to guarantee that $Q$ is terminal, however this is easily verified~\cite[Proposition~3.3]{Kas08c}: $Q$ is terminal if and only if
\[
\sum_{i=0}^d\left\{\frac{\lambda_i\kappa}{h}\right\}\in\{2,\ldots,d-1\},\qquad\text{ for each }\kappa\in\{2,\ldots,h-2\},
\]
where $\{a/b\}$ denotes the fractional part of $a/b\in\Q$.
\item\label{item:wps_divides_degree}
Since $P$ is reflexive, we have that $d!\vol{\dual{P}}\in\Z$. But $d!\vol{\dual{P}}=d!\vol{\dual{Q}}/\mult{P}$, and $d!\vol{\dual{Q}}$ is simply the anticanonical degree of $Y$, given by
\[
d!\vol{\dual{Q}}=(-K_Y)^d=\frac{h^d}{\prod_{i=0}^d\lambda_i}.
\]
Hence we have the requirement that:
\[
\mult{P}\,\Big|\,\frac{h^d}{\prod_{i=0}^d\lambda_i}.
\]
\item\label{item:wps_ehrhart_bounds}
Since $N'\hookrightarrow\Z^d$ we require that
\begin{equation}\label{eq:wps_ehrhart_bound}
L_Q(m)\leq L_P(m),\qquad\text{ for all }m\in\Z_{\geq 0},
\end{equation}
where $L_P$ is determined by the target $\delta$-vector via Lemma~\ref{lem:main_lemma}. The $\delta$-vector for $Q$ can be easily computed~\cite{Kas08c}:
\[
\delta_j:=\abs{\left\{\kappa\in\{0,\ldots,h-1\}\Big|\sum_{i=0}^d\left\{\frac{\lambda_i\kappa}{h}\right\}=j\right\}},\qquad\text{ where }j\in\{0,\ldots,d\}.
\]
Hence the Ehrhart polynomial $L_Q$ can be computed and the condition verified. Notice that the leading coefficient of $L_Q$ is, by construction, at most equal to the leading coefficient of $L_P$ (since this are equal to $\vol{Q}$ and $\vol{P}$ respectively), hence~\eqref{eq:wps_ehrhart_bound} is automatically satisfied for all sufficiently large values of $m$.
\end{enumerate}
In practice, these three conditions are sufficiently strong as to exclude many candidate choices of weights, and this is often sufficient to show that no such $Q$ exists, or to restrict the possibilities to only one or two choices of weights.

Finally, for each choice of weights $(\lambda_0,\ldots,\lambda_d)$ satisfying the above conditions, one can simply work through the possible Hermite normal forms $H$ with $\det{H}=\mult{P}$ and consider the resulting simplicies arising from~\eqref{eq:wps_hermite}. Here one can exploit the symmetries of $Q$ arising from the weights, and the fact that $Q$ is reflexive, in order to reduce the number of choices of $H$ that need to be considered.

\subsection{An example in dimension ten}\label{subsec:dim_10_example}
Let $\delta=(1,1,1,1,9,28,9,1,1,1,1)$. It is easily verified that the corresponding Ehrhart polynomial has a root $\alpha$ with $\Re{\alpha}<-5$ and a root $\beta$ with $\Re{\beta}>4$. We shall use the method sketched above to construct a simplex $P$ with this $\delta$-vector. We will make considerable use of the combinatorics of cyclic quotient singularities; an excellent reference is~\cite{Reid85}.

By~\eqref{wps:poss_mults} we have that $\mult{P}\in\{1,2,3,6,9,18,27,54\}$. Using~\eqref{eq:wps_reflexive} and~\eqref{eq:wps_terminal} and checking conditions~\eqref{item:wps_iff_terminal} and~\eqref{item:wps_divides_degree} we obtain a list of $24$ candidate well-formed Gorenstein terminal weights; see Table~\ref{tab:possible_weights}. Of these, all but $(1,1,1,1,1,1,2,2,2,3,3)=(1^6,2^3,3^2)$ with $\mult{P}=3$ are excluded by condition~\eqref{item:wps_ehrhart_bounds}.

\begin{table}[htdp]
\caption{The initial list of candidate Gorenstein terminal weights from which we may be able to construct a terminal reflexive simplex with $\delta$-vector $(1,1,1,1,9,28,9,1,1,1,1)$.}\label{tab:possible_weights}
\centering
\begin{tabular}{cl}
\toprule
$\mult{P}$&\multicolumn{1}{c}{$(\lambda_0,\ldots,\lambda_{10})$}\\
\cmidrule(lr){1-1}\cmidrule(lr){2-2}
$1$&$(1,1,1,1,1,1,6,6,9,9,18)$\\
$1$&$(1,1,1,1,2,3,3,6,9,9,18)$\\
$1$&$(1,1,1,1,2,3,6,6,6,9,18)$\\
$1$&$(1,1,1,1,2,6,6,9,9,9,9)$\\
$1$&$(1,1,1,2,2,2,3,6,9,9,18)$\\
$1$&$(1,1,1,2,2,2,6,6,6,9,18)$\\
$1$&$(1,1,2,2,2,2,2,6,9,9,18)$\\
$1$&$(1,1,2,2,3,3,3,3,9,9,18)$\\
$1$&$(1,1,2,2,3,3,3,6,6,9,18)$\\
$1$&$(1,1,2,2,3,3,6,6,6,6,18)$\\
$1$&$(1,1,2,2,3,3,6,9,9,9,9)$\\
$1$&$(1,1,2,2,3,6,6,6,9,9,9)$\\
\bottomrule
\end{tabular}
\hspace{1em}
\begin{tabular}{cl}
\toprule
$\mult{P}$&\multicolumn{1}{c}{$(\lambda_0,\ldots,\lambda_{10})$}\\
\cmidrule(lr){1-1}\cmidrule(lr){2-2}
$1$&$(1,1,2,2,6,6,6,6,6,9,9)$\\
$1$&$(1,2,2,2,2,3,3,3,9,9,18)$\\
$1$&$(1,2,2,2,2,3,3,6,6,9,18)$\\
$1$&$(1,2,2,2,2,3,6,9,9,9,9)$\\
$1$&$(1,2,2,2,2,6,6,6,9,9,9)$\\
$1$&$(2,2,2,2,2,2,3,3,9,9,18)$\\
$3$&$(1,1,1,1,1,1,1,1,1,3,6)$\\
$3$&$(1,1,1,1,1,1,1,1,2,2,6)$\\
$3$&$(1,1,1,1,1,1,1,2,3,3,3)$\\
$3$&$(1,1,1,1,1,1,2,2,2,3,3)$\\
$3$&$(1,1,1,1,1,2,2,2,2,2,3)$\\
$3$&$(1,1,1,1,2,2,2,2,2,2,2)$\\
\bottomrule
\end{tabular}
\end{table}

We have reduced to the case where $X=\Proj(1^6,2^3,3^2)/G$, $\abs{G}=3$. Hence $G$ is a cyclic group. Write $\frac{1}{3}(\alpha_0,\ldots,\alpha_{10})$ for a generator of $G$, where $\alpha_i\in\{0,1,2\}$. This acts on $Y=\Proj(1^6,2^3,3^2)$ via
\[
(x_0,\ldots,x_{10})\mapsto(\varepsilon^{\alpha_0}x_0,\ldots,\varepsilon^{\alpha_{10}}x_{10}),\qquad\text{ where }\varepsilon^3=1.
\]
The fixed-points of this group-action are unchanged when we include the action of the weights; that is, the transformation
\[
\frac{1}{3}(\alpha_0,\ldots,\alpha_{10})\mapsto
\frac{1}{3}(\overline{\alpha_0+\lambda_0},\ldots,\overline{\alpha_{10}+\lambda_{10}})
\]
leaves $X$ unchanged. Here $\overline{a}$ denotes the unique integer $0\leq c<3$ such that $a\equiv c\mod 3$. Since $\lambda_0=1$ we use this transformation to arrange for $\alpha_0=0$.

That $\lambda_0=1$ means that the affine chart on $Y$ given by setting $x_0=1$ is smooth. We can interpret this combinatorially as follows. Let $v_0,\ldots,v_{10}\in N'$ be the vertices of $Q$, satisfying $\lambda_0v_0+\ldots+\lambda_{10}v_{10}=0$. Restricting to $x_0=1$ corresponds to taking the cone over the facet $F$ of $Q$ that does not contain $v_0$. That is, $C_0:=\scone{v_1,\ldots,v_{10}}$. That $C_0$ is smooth means that the generators $v_1,\ldots,v_{10}$ form a basis for the lattice $N'$. The action of the group $G$ on this chart is given by $\frac{1}{3}(\alpha_1,\ldots,\alpha_{10})$; we regard this as a rational point $g=\frac{1}{3}\sum_{i=1}^{10}\alpha_iv_i$ in $N'\otimes_\Z\Q$. Let $N'\hookrightarrow N'+g\cdot\Z$ be the natural inclusion of the lattice $N'$ in the lattice $N'+g\cdot\Z$ generated by adding $g$. We identify $C_0$ with its image under this embedding. By assumption $C_0$ is a Gorenstein cone. But $C_0$ is Gorenstein if and only if
\begin{equation}\label{eq:3_divides_sum}
3\,\Big|\,\sum_{i=1}^{10}\lambda_i.
\end{equation}

The Ehrhart polynomials of $P$ and $Q$ are readily calculated from their respective $\delta$-vectors:
\begin{align*}
L_P(m)&=\frac{18}{10!}(3m^{10} + 15m^9 + 225m^8 + 810m^7 + 17969m^6 + 51135m^5 +\\
&\hspace{5em} 274775m^4 + 465240m^3 + 815828m^2 + 591600m + 201600)\\
\text{ and }L_Q(m)&=\frac{18}{10!}(m^{10} + 5m^9 + 255m^8 + 990m^7 + 17843m^6 + 50085m^5 +\\
&\hspace{5em} 274945m^4 + 467560m^3 + 815756m^2 + 590160m + 201600)
\end{align*}
Notice that $L_P(m)=L_Q(m)$ when $m\in\{0,1,2,3\}$. Let $u\in\Hom(N',\Z)$ correspond to the height $1$ supporting hyperplane of the facet $F$. Then
\[
\abs{\{v\in C_0\cap N'\mid u(v)\leq 3\}}=\abs{\{v\in C_0\cap N'+g\cdot\Z\mid u(v)\leq 3\}}.
\]
Hence we conclude that
\begin{equation}\label{eq:age_bound}
\frac{1}{3}\sum_{i=1}^{10}\overline{\kappa\alpha_i}\not\in\{1,2,3\},\qquad\text{ for }\kappa\in\{1,2\}.
\end{equation}

For any permutation $\sigma$ of the integers $\{1,\ldots,10\}$ such that $\lambda_{\sigma i}=\lambda_i$ for each $i\in\{1,\ldots,10\}$, we can regard $\frac{1}{3}(0,\alpha_1,\ldots,\alpha_{10})$ and $\frac{1}{3}(0,\alpha_{\sigma 1},\ldots,\alpha_{\sigma 10})$ as generating equivalent group actions. Thus there are $21$ choices (up to permutation) for $\alpha_1,\ldots,\alpha_5$, and $10$ choices for $\alpha_6,\alpha_7,\alpha_8$. For any particular choice of $\alpha_1,\ldots,\alpha_8$, condition~\eqref{eq:3_divides_sum} means that there are only two choices for $\alpha_9$ and $\alpha_{10}$. Furthermore, whether we pick $\frac{1}{3}(0,\alpha_1,\ldots,\alpha_{10})$ or $\frac{1}{3}(0,\overline{2\alpha_1},\ldots,\overline{2\alpha_{10}})$ as our generator makes no difference: up to permutation this involution fixes exactly $3\cdot2\cdot2=12$ cases (including the trivial action). Remembering to exclude the trivial action, a simple counting argument gives us $(21\cdot10\cdot2-12)/2+11=215$ distinct group actions.

Condition~\eqref{eq:age_bound} reduces the $215$ possible group actions to just $58$ candidates; we call the set of such candidates $\mathcal{G}$. Remembering that our decision to use the weights $(1^6,2^3,3^2)$ to zero $\alpha_0$ was a choice, and that we could just have validly chosen to set $\alpha_i=0$ for any $i\in\{0,\ldots,5\}$ (i.e.~any $i$ such that $\lambda_i=1$), we have that for any $\left<\frac{1}{3}(\alpha_0,\ldots,\alpha_{10})\right>\in\mathcal{G}$ we need $\left<\frac{1}{3}(\overline{\alpha_0-\alpha_i\lambda_0},\ldots,\overline{\alpha_{10}-\alpha_i\lambda_{10}})\right>\in\mathcal{G}$, for each $i\in\{0,\ldots,5\}$, where now we regard groups as being defined only up to permutations $\sigma$ of $\{0,\ldots,10\}$ that fix $\lambda_{\sigma i}=\lambda_i$. This gives us exactly one possible group action, generated by
\[
\frac{1}{3}(0,1,2,0,1,2,0,1,2,1,2).
\]

Finally, we need to compute the resulting polytope $P$ and check that the $\delta$-vector agrees. This is trivial. We find that, up to isomorphism,
\[
P=\sconv{e_1,\ldots,e_9,(1,2,0,1,2,0,1,1,2,3),(-4,-5,-2,-3,-4,-1,-2,-2,-3,-3)},
\]
where $e_i$ is the $i$-th standard basis element, and that $P$ has $\delta$-vector $(1,1,1,1,9,28,9,1,1,1,1)$.

\subsection*{Acknowledgments}
Heged{\"u}s is supported in part by OTKA grant~K77476. Higashitani is partially supported by a JSPS Fellowship for Young Scientists and by JSPS Grant-in-Aid for Young Scientists (B) $\sharp$26800015. Kasprzyk is supported by EPSRC grant~EP/I008128/1 and ERC Starting Investigator Grant number~240123.

\bibliographystyle{plain}

\begin{thebibliography}{10}

\bibitem{Bat94}
Victor~V. Batyrev.
\newblock Dual polyhedra and mirror symmetry for {C}alabi-{Y}au hypersurfaces
  in toric varieties.
\newblock {\em J. Algebraic Geom.}, 3(3):493--535, 1994.

\bibitem{BLDPS05}
M.~Beck, J.~A. De~Loera, M.~Develin, J.~Pfeifle, and R.~P. Stanley.
\newblock Coefficients and roots of {E}hrhart polynomials.
\newblock In {\em Integer points in polyhedra---geometry, number theory,
  algebra, optimization}, volume 374 of {\em Contemp. Math.}, pages 15--36.
  Amer. Math. Soc., Providence, RI, 2005.

\bibitem{BHW07}
Christian Bey, Martin Henk, and J{\"o}rg~M. Wills.
\newblock Notes on the roots of {E}hrhart polynomials.
\newblock {\em Discrete Comput. Geom.}, 38(1):81--98, 2007.

\bibitem{BB92}
A.~A. Borisov and L.~A. Borisov.
\newblock Singular toric {F}ano three-folds.
\newblock {\em Mat. Sb.}, 183(2):134--141, 1992.
\newblock text in Russian. English transl.: \emph{Russian Acad. Sci. Sb.
  Math.}, \textbf{75} (1993), 277--283.

\bibitem{Bra08}
Benjamin Braun.
\newblock Norm bounds for {E}hrhart polynomial roots.
\newblock {\em Discrete Comput. Geom.}, 39(1-3):191--193, 2008.

\bibitem{BD08}
Benjamin Braun and Mike Develin.
\newblock Ehrhart polynomial roots and {S}tanley's non-negativity theorem.
\newblock In {\em Integer points in polyhedra---geometry, number theory,
  representation theory, algebra, optimization, statistics}, volume 452 of {\em
  Contemp. Math.}, pages 67--78. Amer. Math. Soc., Providence, RI, 2008.

\bibitem{Buc08}
Weronika Buczy{\'n}ska.
\newblock Fake weighted projective spaces.
\newblock \href{http://arxiv.org/abs/0805.1211}{\texttt{arXiv:0805.1211v1}},
  May 2008.

\bibitem{Con02}
Heinke Conrads.
\newblock Weighted projective spaces and reflexive simplices.
\newblock {\em Manuscripta Math.}, 107(2):215--227, 2002.

\bibitem{CK-MirrorSymmetry}
David~A. Cox and Sheldon Katz.
\newblock {\em Mirror symmetry and algebraic geometry}, volume~68 of {\em
  Mathematical Surveys and Monographs}.
\newblock American Mathematical Society, Providence, RI, 1999.

\bibitem{Ehr62}
Eug{\`e}ne Ehrhart.
\newblock Sur les poly\`edres homoth\'etiques bord\'es \`a {$n$} dimensions.
\newblock {\em C. R. Acad. Sci. Paris}, 254:988--990, 1962.

\bibitem{Ehr67}
Eug{\`e}ne Ehrhart.
\newblock Sur un probl\`eme de g\'eom\'etrie diophantienne lin\'eaire. {II}.
  {S}yst\`emes diophantiens lin\'eaires.
\newblock {\em J. Reine Angew. Math.}, 227:25--49, 1967.

\bibitem{Gol09}
V.~V. Golyshev.
\newblock On the canonical strip.
\newblock {\em Uspekhi Mat. Nauk}, 64(1(385)):139--140, 2009.

\bibitem{HK11}
G{\'a}bor Heged{\"u}s and Alexander~M. Kasprzyk.
\newblock Roots of {E}hrhart polynomials of smooth {F}ano polytopes.
\newblock {\em Discrete Comput. Geom.}, 46(3):488--499, 2011.

\bibitem{Hib91}
Takayuki Hibi.
\newblock Ehrhart polynomials of convex polytopes, {$h$}-vectors of simplicial
  complexes, and nonsingular projective toric varieties.
\newblock In {\em Discrete and computational geometry (New Brunswick, NJ,
  1989/1990)}, volume~6 of {\em DIMACS Ser. Discrete Math. Theoret. Comput.
  Sci.}, pages 165--177. Amer. Math. Soc., Providence, RI, 1991.

\bibitem{HibiLBT}
Takayuki Hibi.
\newblock A lower bound theorem for {E}hrhart polynomials of convex polytopes.
\newblock {\em Adv. Math.}, 105(2):162--165, 1994.

\bibitem{H12}
Akihiro Higashitani.
\newblock Counterexamples of the conjecture on roots of {E}hrhart polynomials.
\newblock {\em Discrete Comput. Geom.}, 47(3):618--623, 2012.

\bibitem{Kas08b}
Alexander~M. Kasprzyk.
\newblock Bounds on fake weighted projective space.
\newblock {\em Kodai Math. J.}, 32:197--208, 2009.

\bibitem{Kas08c}
Alexander~M. Kasprzyk.
\newblock Classifying terminal weighted projective space.
\newblock \href{http://arxiv.org/abs/1304.3029}{\texttt{arXiv:1304.3029
  [math.AG]}}, 2013.

\bibitem{KN12}
Alexander~M. Kasprzyk and Benjamin Nill.
\newblock Fano polytopes.
\newblock In Anton Rebhan, Ludmil Katzarkov, Johanna Knapp, Radoslav Rashkov,
  and Emanuel Scheidegger, editors, {\em Strings, Gauge Fields, and the
  Geometry Behind -- the Legacy of {M}aximilian {K}reuzer}, pages 349--364.
  World Scientific, 2012.

\bibitem{KS00}
Maximilian Kreuzer and Harald Skarke.
\newblock Complete classification of reflexive polyhedra in four dimensions.
\newblock {\em Adv. Theor. Math. Phys.}, 4(6):1209--1230, 2000.

\bibitem{Mac71}
I.~G. Macdonald.
\newblock Polynomials associated with finite cell-complexes.
\newblock {\em J. London Math. Soc. (2)}, 4:181--192, 1971.

\bibitem{MHNOH11}
Tetsushi Matsui, Akihiro Higashitani, Yuuki Nagazawa, Hidefumi Ohsugi, and
  Takayuki Hibi.
\newblock Roots of {E}hrhart polynomials arising from graphs.
\newblock {\em J. Algebraic Combin.}, 34(4):721--749, 2011.

\bibitem{OS12}
Hidefumi Ohsugi and Kazuki Shibata.
\newblock Smooth {F}ano polytopes whose {E}hrhart polynomial has a root with
  large real part.
\newblock {\em Discrete Comput. Geom.}, 47(3):624--628, 2012.

\bibitem{Reid85}
Miles Reid.
\newblock Young person's guide to canonical singularities.
\newblock In {\em Algebraic geometry, Bowdoin, 1985 (Brunswick, Maine, 1985)},
  volume~46 of {\em Proc. Sympos. Pure Math.}, pages 345--414. Amer. Math.
  Soc., Providence, RI, 1987.

\bibitem{R-V02}
Fernando Rodriguez-Villegas.
\newblock On the zeros of certain polynomials.
\newblock {\em Proc. Amer. Math. Soc.}, 130(8):2251--2254 (electronic), 2002.

\bibitem{Stan80}
Richard~P. Stanley.
\newblock Decompositions of rational convex polytopes.
\newblock {\em Ann. Discrete Math.}, 6:333--342, 1980.
\newblock Combinatorial mathematics, optimal designs and their applications
  (Proc. Sympos. Combin. Math. and Optimal Design, Colorado State Univ., Fort
  Collins, Colo., 1978).

\end{thebibliography}

\end{document}